\author{Eric Bonnetier\footnotemark[1] \and Didier Bresch\footnotemark[2] \and Vuk Mili{{\v s}}i{{\'c}}\footnotemark[1]}
\title{{\em A priori }convergence estimates for a rough Poisson-Dirichlet problem
with natural vertical boundary conditions}
\begin{document}
\renewcommand{\thefootnote}{\fnsymbol{footnote}}
\footnotetext[2]{LAMA, UMR 5127 CNRS, Universit{\'e} de Savoie, 73217 Le Bourget du Lac cedex, FRANCE}
\footnotetext[1]{LJK-IMAG, UMR 5523 CNRS, 51 rue des Math{\'e}matiques, B.P.53, 38041 Grenoble cedex 9, FRANCE}
\renewcommand{\thefootnote}{\arabic{footnote}}

\maketitle
\begin{center}
{\em "Dedicated to Professor Giovanni Paolo Galdi 60' Birthday"}
\end{center}

\begin{abstract}
Stents are medical devices designed to modify blood flow in aneurysm sacs, in order to prevent their rupture. Some of them can be considered as  a locally periodic rough boundary. In order to approximate blood flow in arteries and vessels of the cardio-vascular system  containing stents, we use multi-scale techniques to construct boundary layers and wall laws. Simplifying the flow we turn to consider a 2-dimensional Poisson problem that conserves essential features related to the rough boundary. Then, we investigate  convergence of boundary layer approximations and the corresponding wall laws in the case of Neumann type boundary conditions at the inlet and outlet parts of the domain. The difficulty comes from the fact that correctors, for the boundary layers  near the rough surface, may introduce error terms on the other portions of the boundary. In order to correct these spurious oscillations, we introduce a vertical boundary layer. Trough a careful study of its behavior, we prove rigorously decay estimates. We then construct  complete boundary layers that respect the macroscopic boundary conditions. We also derive error estimates in terms of the roughness size $\epsilon$ either for the full boundary layer approximation and for the corresponding  averaged wall law.
\end{abstract}

\bigskip

{\em Keywords:}
wall-laws, rough boundary, Laplace equation, 
multi-scale modelling, boundary layers, 
error estimates.

\bigskip

{\em AMS subject classifications :}{76D05, 35B27, 76Mxx, 65Mxx}

\bigskip

\section{Introduction}

A common therapeutic treatment to prevent rupture of aneurysms,
in large arteria or in blood vessels in the brain,
consists in placing a device inside the aneurysm sac.
The device is designed to modify the blood flow in this region,
so that the blood contained in the sac coagulates and the sac
can be absorbed into the surrounding tissue.
The traditional technique consists in obstructing the sac with a long coil. 
In a more recent procedure, a device called {\em stent}, that can
be seen as a second artery wall, is placed so as to close 
the inlet of the sac. 
We are particularly interested in stents produced by a company called Cardiatis,
which are designed as multi-layer wired structures.
Clinical tests show surprising bio-compatibility features of these particular devices
and one of our objectives is to understand how the design of these stents affect
their effectiveness.
As stent thicknesses are small compared to the characteristic dimensions 
of the flow inside an artery, studying their properties is a challenging
multi-scale problem.

In this work we focus on the fluid part and on the effects of the stent rugosity
on the fluid flow.
We simplify the geometry to that of a 2-dimensional box $\Oe$, that represents a
longitudinal cut through an artery: the rough base represents the shape of
the wires of the stent (see fig. \ref{Plaque}, left). 
We also simplify the flow model and consider a Poisson problem
for the axial component of the velocity.
Our objective is to analyze precisely multi-scale approximations of this
simplified model, in terms of the rugosity.

In \cite{BrMiQam} we considered periodic inflow and outflow boundary conditions
on the vertical sides $\gio$ of $\Oe$. 
Here, we study the case of more realistic Neumann  conditions on these boundaries,
which are consistent with the modelling of a flow of blood.

As a zeroth order approximation to $u^\epsilon$, we consider
the solution $\tuz$ of the same PDE, posed on a smooth domain 
$\Oz$ strictly contained in $\Oe$. 
We introduce boundary layer correctors $\beta$ and $\tau$ that correct the 
incompatibilities between the domain and $\tuz$. 
These correctors induce in turn perturbations on the vertical sides 
$\gio$ of $\Oe$. We therefore consider additional correctors
$\tin$ and $\tout$, that should account for these perturbations (see fig \ref{ShemaAWL_NP}). 
We also introduce a first order approximation, defined in $\Oz$, that
satisfies a mixed boundary condition (called Saffman-Joseph wall law)
on a fictitious interface $\Gz$ located inside $\Oe$.
\begin{figure}[h]
\begin{center}
\input{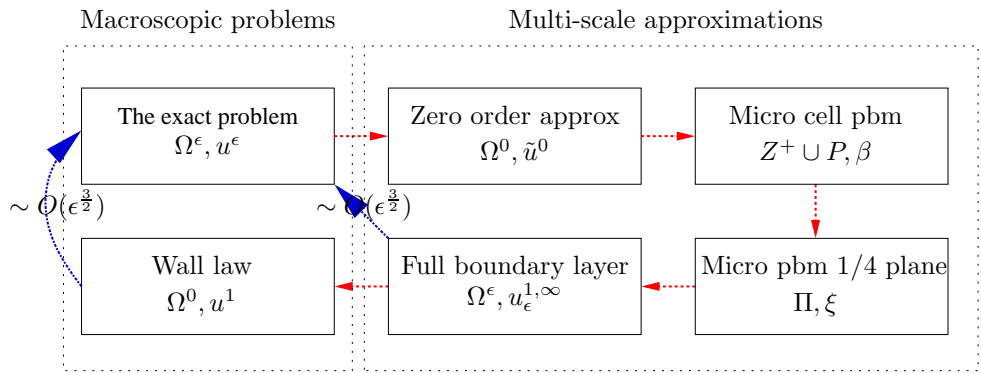}
\caption{The exact solution, the multi-scale framework and wall laws} \label{ShemaAWL_NP}
\end{center}
\end{figure}

For the case of Navier-Stokes equations and the Poiseuille flow
the problem was already considered in {\sc J{\"a}ger} {\em et al.} \cite{JaMiJDE.01,JaMiSIAM.00}
but the authors imposed  Dirichlet boundary conditions on $\gio$
for the vertical velocity and pressure. Their approach provided a localized 
vertical boundary layer in the $\epsilon$-close neighborhood of $\gio$.
A convergence proof for the boundary layer approximation and the wall law
was given wrt to $\epsilon$, the roughness size. These arguments are specific
to the case of Poiseuille flow and differ from the general setting
given in the homogenization framework \cite{SaPa.80}. 
In this work, we address the case of Neumann boundary conditions, 
where the above methods do not apply. 
The difficulty in this case, stems from the `pollution' on the
vertical sides due to the bottom boundary layer correctors.

From our point of view, the originality of this work emanate from the following aspects~:
\begin{itemize}
\item[-] the introduction of a general quarter-plane corrector $\xi$
that reduces the oscillations of the periodic boundary layer
approximations on a specific region of interest. Changing the
type of boundary conditions implies only to change the boundary
conditions of the quarter-plane corrector on a certain part
of the microscopic domain.
\item[-] the analysis of decay properties of this new corrector:
indeed we use techniques based on weighted Sobolev spaces
to derive some of the estimates and we complete this description
by integral representation and 
Fragmen-Lindel{\"o}f theory in order to derive sharper $L^\infty$ bounds.
\item[-] we show new estimates based on duality 
on the traces and provide a weighted
correspondence between macro and micro features of test
functions of certain Sobolev spaces.
\end{itemize}
The error between the wall-law and the exact solution is evaluated
on $\Oz$, the smooth domain above the roughness, in the $L^2(\Oz)$ norm.
This relies on very weak estimates \cite{Ne.Book.67} that
moreover improve {\em a priori} estimates by a $\sqrt{\epsilon}$ factor.
While this work focuses on the precise description of vertical boundary
layer correctors in the {\em a priori} part, a second article 
extends our methods to the very weak context \cite{vws} in order to obtain 
optimal rates of convergence also for this step.

The paper is organized as follows~: in section \ref{Section.Framework},
we present the framework (including notations, domains characteristics and the
toy PDE model under consideration), in section \ref{MswlP},
we give a brief summary of what is already available from the periodic
context \cite{BrMiQam} that should serve as a basis for what follows,
 in section \ref{MswlNP} we present a microscopic vertical boundary 
layer and its careful analysis in terms of decay at infinity, such decay 
properties will be used in section \ref{Section.Convergence}  in the convergence proofs 
 for the full boundary layer approximation as well as in the corresponding wall law
 analysis.

\section{The framework}\label{Section.Framework}

	In this work, $\Omega^\epsilon$ denotes  the rough domain in ${\mathbb R}^2$ depicted in fig. \ref{Plaque}, 
 $\Omega^0$ denotes the smooth one,   $\Gamma^\epsilon$ is the 
rough boundary and $\Gamma^0$ (resp. $\Gamma^1$) the lower (resp. upper) 
smooth one  (see fig \ref{Plaque}). 
The rough boundary $\Gamma^\epsilon$ is described as a periodic repetition
at the microscopic scale of a single boundary cell $P^0$. The latter 
can be parameterized as the graph of a Lipschitz function $f:[0,2\pi[\to ]-1:0[$, 
the boundary is then defined as
\begin{equation}\label{DefBoundaryPz}
P^0=\{ y \in [0,2\pi]\times]-1:0[ \text{  s.t.  }y_2= f(y_1) \}.
\end{equation}
Moreover we suppose that $f$ is bounded and negative definite, i.e. there exists a
positive constant $\delta$ such that $1-\delta < f(y_1)<\delta $ for all $y_1\in [0,2\pi]$.
The lower bound of $f$ is arbitrary and it is useful only in order to define 
some weight function see section \ref{MswlNP}.
We assume that the ratio between
$L$ (the width of $\Omega^0$) and $2\pi \epsilon$ (the width of the periodic cell) 
is always a positive integer.
\begin{figure}[h]
\begin{center}
\input{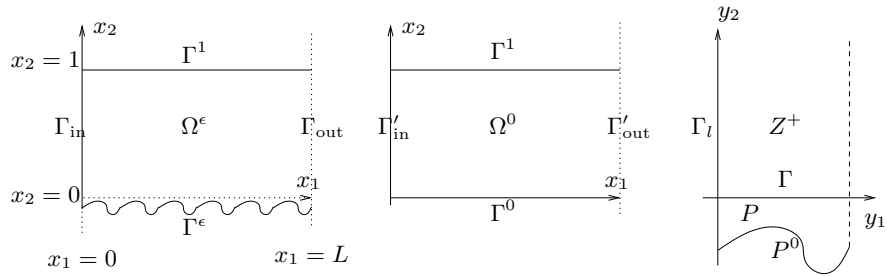}
\caption{\em Rough, smooth and cell domains} \label{Plaque}
\end{center}
\end{figure}
  We consider a simplified setting that avoids  theoretical difficulties and  non-linear complications of the full Navier-Stokes equations.
  Starting from the Stokes system, we consider a Poisson problem for  the axial component of the velocity.
The axial component of the pressure gradient is assumed to reduce to a 
constant right hand side $C$.
If we set   periodic inflow and outflow 
boundary conditions,
the simplified formulation reads~: find $u^\epsilon$ such that
\begin{equation}
\label{RugueuxComplet}
\left\{
\begin{aligned}
& - \Delta \uep = C,\text{ in  }  \Omega^\epsilon, \\
& \uep = 0,\text{ on }\Gamma^\epsilon \cup \Gamma^1, \\
& \uep \text{ is } x_1 \text{ periodic}.
\end{aligned}
\right.
\end{equation}
In section \ref{MswlP} we should give a brief summary of the framework 
already introduced in \cite{BrMiQam}. Nevertheless the main concern of this
work is to consider the  non periodic setting (see section \ref{MswlNP})
where we should consider an example of a more realistic inlet and outlet 
boundary conditions. Namely, we look for approximations of the 
problem: find $\uenp$ such that
\begin{equation}
\label{RugueuxCompletNP}
\left\{
\begin{aligned}
& - \Delta \uenp = C,\text{ in  } \Omega^\epsilon, \\
& \uenp = 0,\text{ on } \Gamma^\epsilon \cup \Gamma^1, \\
& \dd{\uenp}{\bf n} = 0,\text{ on } \gio.
\end{aligned}
\right.
\end{equation}
In what follows, functions that do depend on $y=x/\epsilon$ should be indexed by an $\epsilon$ (e.g. $\cu=\cu(x,x/\epsilon)$).

\section{Summary of the results  obtained in the periodic case \cite{BrMiQam}}\label{MswlP}

\subsection{The cell problems}
\subsubsection{The first order cell problem}
The rough boundary is periodic at the microscopic scale and 
this leads to solve the  microscopic cell problem:
 find $\beta$ s.t.
\begin{equation}
\label{A.cell}
\left\{
\begin{aligned}
& -\Delta \beta = 0,\text{ in } \zup,\\
& \beta = - y_2,\text{ on } P^0,\\
& \beta \text{ is } \yup.
\end{aligned}
\right.
\end{equation}
We define the microscopic average along the fictitious interface
$\Gamma$~: 
$
\obeta = \frac{1}{2\pi} \int_0^{2\pi} \beta(y_1,0) dy_1.
$
As $\zup$ is unbounded in the $y_2$ direction, we define also
$$
D^{1,2} = \{ v \in L^1_{\rm loc}(\zup)/\, D v \in L^2(\zup)^2, 
\, v \text{ is } y_1-\text{periodic }\},
$$ 
then one has the   result~:
\begin{thm}\label{exist_uniKsol_gamma} 
\label{prop.A.cell}
Suppose that $P^0$  is sufficiently smooth ($f$ is Lipschitz) and does not
intersect $\Gamma$. Let $\beta$ be a solution of \eqref{A.cell}, 
then it belongs to $D^{1,2}$. Moreover,
 there exists a unique periodic solution $\eta \in H^\ud(\Gamma)$, of 
the  problem
$$
<S\eta,\mu> = <1,\mu>, \quad \forall \mu \in H^\ud (\Gamma),
$$
where $<,>$ is the $(H^{-\ud}(\Gamma),H^\ud(\Gamma))$ duality bracket, and $S$  the inverse
of the Steklov-Poincar{{\'e}} operator.
One has the  correspondence between $\beta$ and the interface solution $\eta$~:
$$
\beta =H_{Z^+}\eta + H_P\eta,
$$
where $H_{Z^+}\eta$ (resp. $H_P\eta$) is the $y_1$-periodic harmonic extension of $\eta$ on $Z^+$ 
(resp. $P$). 
The solution in $Z^+$ can be written explicitly as a power series
of Fourier coefficients of $\eta$ and reads~:
$$
H_{Z^+} \eta = \beta(y)= \sum_{k=-\infty}^\infty  \eta_k e^{i k y_1 - |k| y_2},\quad \forall y \in Z^+,\quad \eta_k = \int_0^{2\pi} \eta(y_1)e^{-iky_1} dy_1,
$$
In the macroscopic domain $\Omega^0$ this representation formula gives
\begin{equation}\label{BetaL2}
\nrm{\beta\left(\frac{\cdot}{\epsilon}\right) - \obeta }{L^2(\Omega^0)} \leq K \sqrt{\epsilon} \nrm{\eta}{H^\ud(\Gamma)}.
\end{equation}
\end{thm}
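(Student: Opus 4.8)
The plan is to set the problem up by a domain decomposition of the cell $\zup$ into the flat semi--infinite strip $Z^+=(0,2\pi)\times(0,\infty)$ lying above the fictitious interface $\Gamma=(0,2\pi)\times\{0\}$ and the bounded rough cell $P$ between $P^0$ and $\Gamma$, the coupling being carried by the unknown trace $\eta:=\beta|_\Gamma$. Since the only non-homogeneous datum is the affine function $-y_2$ on $P^0$, I would first lift it: choose $\Phi\in D^{1,2}$, $y_1$--periodic, with $\Phi=-y_2$ near $P^0$ and $\Phi\equiv 0$ for $y_2$ large, so that $v:=\beta-\Phi$ solves $-\Delta v=\Delta\Phi\in L^2(\zup)$ (compactly supported), $v=0$ on $P^0$, $v$ $y_1$--periodic. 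The structural fact driving everything else is that a harmonic, $y_1$--periodic, finite--Dirichlet--energy function on $Z^+$ has no modes growing in $y_2$, whence the explicit series of the statement; the hard part will be the functional analysis on the \emph{unbounded} cell, namely the Poincar\'e/Hardy--type inequality on the semi--infinite strip that makes the Dirichlet seminorm a complete norm on the admissible class, together with the rigorous matching of conormal traces along $\Gamma$ so that the interface operator $S$ is well defined on $H^\ud(\Gamma)$. The hypothesis that $P^0$ sits at positive distance from $\Gamma$ (it does, since $f$ is bounded away from $0$) removes any corner interaction and keeps these two points clean.

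\textbf{Membership in $D^{1,2}$ and the representation on $Z^+$.} On $V:=\{w\in D^{1,2}:\ w=0 \text{ on } P^0\}$ the form $a(v,w)=\int_{\zup}Dv\cdot Dw$ is bounded; $w\mapsto\|Dw\|_{L^2(\zup)}$ is a norm on $V$ (if $Dw=0$ then $w$ is constant, and $w=0$ on $P^0$ forces $w=0$) and, granted the Poincar\'e--type inequality on the strip, a complete one, so $a$ is coercive. Lax--Milgram applied with the continuous form $w\mapsto-\int_{\zup}D\Phi\cdot Dw$ then gives a unique $v\in V$, hence $\beta=v+\Phi\in D^{1,2}$; uniqueness of $\beta$ follows because the Dirichlet condition on $P^0$ kills the additive constant. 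Restricting such a $\beta$ to $Z^+$ and separating variables, square--integrability of $D\beta$ discards the $e^{+|k|y_2}$ modes and leaves $\beta=\sum_{k}\eta_k e^{iky_1-|k|y_2}$ on $Z^+$ with $\eta=\beta|_\Gamma$; the identity $\int_{Z^+}|D\beta|^2=2\pi\sum_{k\ne0}|k|\,|\eta_k|^2$ then shows that finiteness of the Dirichlet energy is exactly $\eta\in H^\ud(\Gamma)$.

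\textbf{The interface problem and the correspondence.} On the bounded Lipschitz cell $P$, $\beta$ solves the Dirichlet problem with data $\eta$ on $\Gamma$ and $-y_2$ on $P^0$, i.e. $\beta|_P=H_P\eta$ (the lift being absorbed), and harmonicity of $\beta$ across the smooth interface $\Gamma$ forces the conormal derivatives of $H_{Z^+}\eta$ and of $H_P\eta$ to agree on $\Gamma$. Writing this transmission condition in weak form and isolating the contribution of the $-y_2$ datum produces the variational problem $\langle S\eta,\mu\rangle=\langle 1,\mu\rangle$ for all $\mu\in H^\ud(\Gamma)$, whose operator $S$ is the sum of the Steklov--Poincar\'e form of $Z^+$ (symbol $|k|$, coercive on $H^\ud(\Gamma)/\mathbb{R}$) and the one attached to $P$ with homogeneous datum on $P^0$ (coercive on all of $H^\ud(\Gamma)$ by the trace inequality and the Poincar\'e inequality in $P$); the sum is continuous and coercive on $H^\ud(\Gamma)$, so Lax--Milgram yields a unique $\eta$. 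Conversely, concatenating $H_{Z^+}\eta$ and $H_P\eta$ along $\Gamma$ reconstructs the solution, which is the announced identity $\beta=H_{Z^+}\eta+H_P\eta$.

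\textbf{The $L^2$ estimate \eqref{BetaL2}.} The $k=0$ term of the series is the constant $\obeta$ (the only surviving mode as $y_2\to\infty$, equal to $\frac1{2\pi}\int_0^{2\pi}\eta$), so with $y=x/\epsilon$ one has $\beta(x/\epsilon)-\obeta=\sum_{k\ne0}\eta_k e^{ikx_1/\epsilon-|k|x_2/\epsilon}$ on $\Omega^0=(0,L)\times(0,1)$. Since $L/(2\pi\epsilon)\in\mathbb{N}$, the characters $e^{ikx_1/\epsilon}$ are orthogonal over $(0,L)$, hence
\[
\Bigl\|\beta\!\left(\tfrac{\cdot}{\epsilon}\right)-\obeta\Bigr\|_{L^2(\Omega^0)}^2
= L\sum_{k\ne0}|\eta_k|^2\int_0^1 e^{-2|k|x_2/\epsilon}\,dx_2
\le \frac{L\epsilon}{2}\sum_{k\ne0}\frac{|\eta_k|^2}{|k|},
\]
and, bounding $1/|k|\le|k|$ (as $|k|\ge1$) and using $\sum_{k\ne0}|k|\,|\eta_k|^2\le C\|\eta\|_{H^\ud(\Gamma)}^2$, the right--hand side is at most $K^2\epsilon\,\|\eta\|_{H^\ud(\Gamma)}^2$; taking square roots gives \eqref{BetaL2}. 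With the explicit series in hand this final step is elementary, and the substance of the proof lies in the first part.
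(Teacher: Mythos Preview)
The paper does not actually prove this theorem: Section~3 is explicitly a ``Summary of the results obtained in the periodic case \cite{BrMiQam}'', and the statement is quoted from that earlier work without argument. So there is no proof here to compare against; what can be said is that your proposal is precisely the argument the \emph{statement} is advertising---domain decomposition across $\Gamma$, reduction to the Steklov--Poincar\'e interface equation for $\eta$, Fourier synthesis of $H_{Z^+}\eta$, and the $L^2$ bound by mode-by-mode integration in $x_2$---and it is correct.

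Two small remarks. First, the step you flag as ``granted'' (that $(V,\|D\cdot\|_{L^2})$ is a Hilbert space) does not in fact need a global Poincar\'e or Hardy inequality on the half-strip: it suffices that the seminorm is a norm (clear, since $P^0$ has positive measure) and that $V$ is complete, which follows from local Poincar\'e near $P^0$ and a routine $L^2_{\mathrm{loc}}$ limit argument; you might phrase it that way to avoid overclaiming. Second, in the last display you bound $1/|k|\le |k|$, which works but is wasteful: $\sum_{k\ne0}|\eta_k|^2/|k|$ is already (a multiple of) the $H^{-1/2}$ seminorm of $\eta$, hence dominated by $\|\eta\|_{H^{1/2}(\Gamma)}^2$ directly.
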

\subsubsection{The second order cell problem}

The second order error on $\Geps$ should be corrected thanks to a
new cell problem~: find $\gamma \in D^{1,2}$  solving  
\begin{equation}\label{seKord_cell_pbm}
\left\{
\begin{aligned}
&-\Delta \tau = 0,\quad \forall y \in \zup, \\
& \tau = - y_2^2,\quad \forall y_2 \in P^0,\\
& \tau \text{ periodic  in }y_1. 
\end{aligned}
\right.
\end{equation}
Again, the horizontal average is denoted $\otau$. 
In the same way as for the first order cell problem,
one can obtain a  similar result:
\begin{prop}\label{exist.so_bl}
 Let $P^0$ be smooth enough and do not intersect $\Gamma$.
Then there exists a unique solution $\tau$ of \eqref{seKord_cell_pbm}
in $D^{1,2}(\zup)$. 
\end{prop}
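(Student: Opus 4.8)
\emph{Proof proposal.} The plan is to follow the argument of Theorem~\ref{prop.A.cell} almost verbatim, since problem \eqref{seKord_cell_pbm} has exactly the same structure as \eqref{A.cell}: a $y_1$-periodic harmonic function on the semi-infinite rough domain $\zup$ with a Dirichlet datum prescribed on $P^0$, the only change being that the datum is $-y_2^2$ instead of $-y_2$. Since $P^0$ is the graph of the Lipschitz function $f$ taking values in $\,]-1,0[\,$, the datum $(-y_2^2)|_{P^0}=-f(y_1)^2$ is bounded and Lipschitz, and because $P^0$ does not meet $\Gamma$ there is a strip $\{-c<y_2<0\}$, $c>0$, separating them; hence one builds a $y_1$-periodic lifting $R\tau\in H^1(\zup)$ of this datum, supported in $\{y_2<-c/2\}$, with $\Delta(R\tau)\in L^2(\zup)$ of compact support.

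First I would set $\tau=\tilde\tau+R\tau$, so that $\tilde\tau$ must solve $-\Delta\tilde\tau=\Delta(R\tau)=:F$ in $\zup$, $\tilde\tau=0$ on $P^0$, with $y_1$-periodicity, where $F\in L^2(\zup)$ has compact support. I would then work in the closed subspace $V:=\{v\in D^{1,2} : v=0 \text{ on } P^0\}$ and apply Lax--Milgram (here just Riesz representation) to the Dirichlet form $a(u,v)=\int_{\zup}\nabla u\cdot\nabla v$. Continuity is immediate, and the linear functional $v\mapsto\int_{\zup}Fv$ is bounded on $V$ since $\mathrm{supp}\,F$ is bounded and $v\in L^2_{\rm loc}$. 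Coercivity is the only delicate point and is handled exactly as for $\beta$: on the bounded rough part $P$ a Poincar\'e inequality using the vanishing trace on $P^0$ gives $\|v\|_{L^2(P)}\le C\|\nabla v\|_{L^2(P)}$, while on the strip $Z^+$ a Fourier expansion $v=\sum_k v_k(y_2)e^{iky_1}$ shows that $\|\nabla v\|_{L^2(Z^+)}$ together with the trace on $\Gamma$ controls $v$ on every horizontal slab (the nonzero modes are damped like $e^{-|k|y_2}$, and for a harmonic competitor the zero mode is affine, hence constant under the $L^2$-gradient constraint, hence zero by the trace on $P^0$). Thus $(V,a)$ is a Hilbert space, $\tilde\tau\in V$ exists and is unique, and $\tau=\tilde\tau+R\tau\in D^{1,2}(\zup)$ is the unique solution of \eqref{seKord_cell_pbm}.

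Equivalently, and in closer parallel with the statement of Theorem~\ref{prop.A.cell}, one may reduce to the interface $\Gamma$: decompose $\tau=H_{Z^+}\eta$ on $Z^+$ and $\tau=H_P\eta+w$ on $P$, where $w$ is harmonic in $P$ with $w=0$ on $\Gamma$ and $w=-y_2^2$ on $P^0$; writing the continuity of $\partial_n\tau$ across $\Gamma$ yields $S\eta=g$ on $\Gamma$ with $g:=\partial_n w|_{\Gamma}\in H^{-\ud}(\Gamma)$ and $S$ the inverse Steklov--Poincar\'e operator of Theorem~\ref{prop.A.cell}, symmetric and coercive on $H^\ud(\Gamma)$; Lax--Milgram then produces a unique $\eta\in H^\ud(\Gamma)$, and $\tau\in D^{1,2}(\zup)$ follows from the $Z^+$ representation formula and interior/boundary elliptic regularity near $P^0$, exactly as for $\beta$.

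The main obstacle, as indicated, is the verification of coercivity of the Dirichlet form on the unbounded domain $\zup$, i.e.\ showing that there is no nonzero $y_1$-periodic $D^{1,2}$ harmonic function vanishing on $P^0$. Separation of variables reduces this to ruling out the zero Fourier mode and observing exponential decay of the remaining modes; this is precisely the weighted/Fourier bookkeeping already carried out for the first-order corrector $\beta$, so no new idea is needed beyond checking that the bounded datum $-y_2^2$ does not disturb it.
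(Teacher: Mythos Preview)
Your proposal is correct and follows essentially the same approach as the paper. In fact, the paper does not give an explicit proof of this proposition at all: it simply remarks, just before the statement, that ``In the same way as for the first order cell problem, one can obtain a similar result,'' deferring entirely to the argument of Theorem~\ref{prop.A.cell}. Your write-up carries out precisely this program (lifting the bounded Lipschitz datum $-y_2^2$ and applying Lax--Milgram on $V\subset D^{1,2}$, or equivalently reducing to the Steklov--Poincar\'e equation on $\Gamma$), and thus supplies considerably more detail than the paper itself.
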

\subsection{Standard averaged wall laws}
\subsubsection{A first order approximation}
Using the averaged value $\obeta$ defined above, one can construct a first order
approximation $u^1$ defined on the smooth interior domain $\Oz$ that solves~:
\begin{equation}
\label{macro_ordre_un}
\left\{
\begin{aligned}
& -\Delta u^1 = C,\quad \forall x \in \Omega^0,\\
& u^1 = \epsilon \obeta  \dd{u^1}{x_2},\quad \forall x\in \Gamma^0, \quad u^1 = 0,\quad \forall x\in \Gamma^1, \\
& u^1 \text{ is }\xup,
\end{aligned}
\right.
\end{equation}
whose explicit solution  reads~:
\begin{equation}\label{Poiseuille.order.one}
u^1(x)=-\frac{C}{2}\left( x_2^2 - \frac{x_2}{1+\epsilon \obeta} - \frac{\epsilon\obeta}{1+\epsilon \obeta} \right).
\end{equation}
Under the hypotheses of theorem \ref{prop.A.cell}, one  derives error estimates for the first order wall law
$$
\nrm{\uep - u^1}{L^2(\Oz)} \leq K \epsilon^\td.
$$
\subsubsection{A second order approximation} 
In the same way one should derive second order averaged wall law $u^2$ satisfying the  boundary value problem~:
\begin{equation}\label{macro_ordre_deux}
\left\{
\begin{aligned}
& -\Delta u^2 = C,\quad \forall x \in \Omega^0,\\
& u^2 = \epsilon \obeta \dd{u^2}{x_2} + \frac{\epsilon^2}{2} \otau \dd{^2u^2}{x_2^2},\quad \forall x \in \Gamma^0,\\
& u^2= 0,\quad \forall x \in \Gamma^1, u^2 \text{ is }\xup,
\end{aligned}
\right.
\end{equation}
whose solution exists, is unique \cite{BrMiQam} and writes~:
\begin{equation}\label{Poiseuille.order.two}
u^2(x)=-\frac{C}{2}\left( x_2^2 - \frac{x_2(1+\epsilon^2 \otau)}{1+\epsilon \obeta} - \frac{\epsilon(\obeta-\epsilon \otau)}{1+\epsilon \obeta} \right).
\end{equation}
Now, error estimates do not provide second order accuracy, namely we only obtain
$$
\nrm{\uep - u^2}{L^2(\Oz)} \leq K \epsilon^\td,
$$
which essentially comes from the influence of microscopic oscillations
that this averaged second order approximation neglects. Thanks 
to estimates \eqref{BetaL2}, one sees easily that these oscillations account
as $\epsilon^\td$ if not included in the wall law approximation.

\subsection{Compact form of the full boundary layer ansatz}
Usually in the presentation of wall laws, one first introduces 
the full boundary layer approximation. This approximation
is an asymptotic expansion defined on the whole 
rough domain $\Oe$. In a further step one averages this approximation
in the axial direction over a fast horizontal period and 
derives in a second step the corresponding standard wall law.

Thanks to various considerations already exposed in \cite{BrMiQam},
the authors showed that actually a reverse relationship could be
defined that expresses the full boundary layer approximations
as functions of the wall laws. Obviously this works because
the wall laws (defined only on $\Oz$) are explicit and 
thus easy to extend to the
whole domain $\Oe$. Indeed we re-define
\begin{equation}\label{ExtensionWallLawOrderOne}
u^1(x)=  \frac{C}{2} \left( ( 1 - x_2 ) x_2 \chiu{\Oz} +  x_2 \chiu{\oo} \right) 
- \frac{\epsilon\beta }{1+\epsilon \obeta} (1-x_2  ), \quad  \forall x \in \Oe
\end{equation}
while we simply extend $u^2$ using the formula \eqref{Poiseuille.order.two}
over the whole domain.
This leads to write~:
\begin{equation}
\label{fblcp}
\begin{aligned}
\uiuep &= u^1 + \epsilon \dd{u^1 }{x_2}(x_1,0)\left( \beta \lrxe - \obeta  \right),  \\
\uidep &= u^2 + \epsilon \dd{u^2 }{x_2}(x_1,0)\left( \beta \lrxe - \obeta  \right)  + \frac{\epsilon^2}{2} \dd{^2 u^2}{x_2^2} (x_1,0) \left(\tau  \lrxe - \otau  \right).
\end{aligned}
\end{equation}
For these first order and second order full boundary layer approximations
one can set the  error estimates \cite{BrMiQam}:
$$
\nrm{\uep - \uiue}{L^2(\Oz)} \leq K \epsilon^\td,\quad \nrm{\uep - \uide}{L^2(\Oz)} \leq K e^{-\frac{1}{\epsilon}}.
$$
Note that the second order full boundary layer approximation is very close
to the exact solution in the periodic case, an important step that this
work was aiming to reach is to show how far this can be extended to 
a more realistic boundary conditions considered in \eqref{RugueuxCompletNP}.
Actually, convergence rates provided hereafter and in \cite{vws}
show that only first order accuracy can be achieved trough the 
addition of a vertical boundary layer (see below). For this reason
we study in the rest of this paper only the first order full boundary
layer and its corresponding wall law.

\section{The non periodic case: a vertical corrector}\label{MswlNP}
The purpose of what follows is to extend above results
to the practical case of \eqref{RugueuxCompletNP}. We should show a general method to handle
such a problem. It is inspired in a part from the homogenization
framework already presented in \cite{SaPa.80,Vogelius} for
a periodic media in all directions. The approach below uses some arguments
exposed in \cite{BoVe} for another setting.

\subsection{Microscopic decay estimates}\label{Xi}

In what follows we  mainly  need to correct  oscillations
of the normal derivative of the first order boundary 
layer corrector $\beta$ on the inlet and outlet $\gio$.
For this sake, we define the  notations
$\Pi := \cup_{k=0}^{+\infty} [ \zup + 2 \pi k \eu]$, the vertical boundary will 
be denoted $E:=\{0\} \times ]f(0),+\infty[$ and  the bottom $B:=\{y\in P^0 \pm 2 k \pi \eu\}$ (cf. fig \ref{pi}).
In what follows we should denote $\Pi':=\rr^2$, $B':=\rr\times \{0\}$ and $E':=\{0\}\times \rr$.
\begin{figure}[h] 
\begin{center}
\input{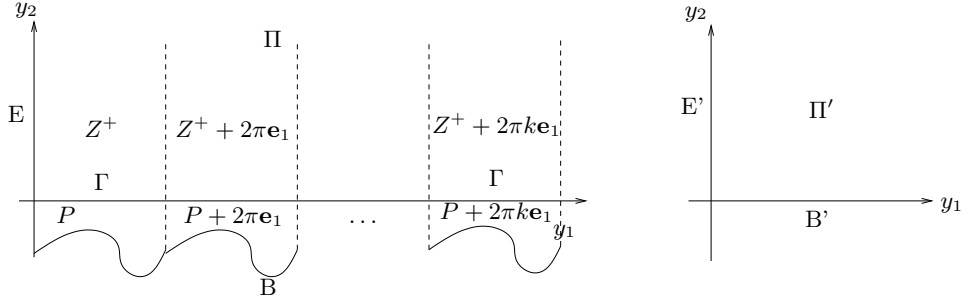}
\caption{\em Semi infinite microscopic domains: $\Pi$, the rough quarter-plane and $\Pi'$, the smooth one } \label{pi}
\end{center}
\end{figure} \\
On this domain,  we introduce the  problem: find $\xi$
such that
\begin{equation}
\label{tau}
\left\{
\begin{aligned}
& -\Delta \xi = 0, \quad \text{ in } \Pi, \\
& \dd{\xi}{\bf n}(0,y_2) = \dd{ \beta }{\bf n}(0,y_2) ,\quad \text{ on } E,\\
& \xi = 0, \quad  \text{ on } B.
\end{aligned}
\right.
\end{equation}
We  define the standard weighted Sobolev spaces~: for any given integers $(n,p)$ and a real $\alpha$ set
$$
\ws{n}{p}{\alpha}{\Omega}:=\left\{ v \in \cD'(\Omega) \, / \, |D^\lambda v| (1+\rho^2)^{\frac{ \alpha + |\lambda| - n}{2}{\nobreakspace}} \in L^p(\Omega),\, 0\leq |\lambda| \leq n \,\right\}
$$
where $\rho := \sqrt{ y_1^2 + (y_2+1)^2 }$. In what follows we should distinguish 
between properties depending on $\rho$ which is a distance to a point
exterior to the domain $\Pi$ and
$r=\sqrt{ y_1^2 + y_2^2 }$ the distance to the interior point $(0,0)$.
These weighted Sobolev spaces are Banach spaces for the norm
$$
\nrm{\xi}{W^{m,p}_\alpha(\Omega)}:= \left( \sum_{0\leq|\lambda|\leq m} \nrm{(1+\rho^2)^{\frac{\alpha-m+|\lambda|}{2}}D^\lambda u}{L^p(\Omega)}^p\right)^{\frac{1}{p}},
$$
the semi-norm being
$$
\snrm{\xi}{\ws{m}{p}{\alpha}{\Omega}}:= \left( \sum_{|\lambda|= m} \nrm{(1+\rho^2)^{\frac{\alpha-m+|\lambda|}{2}}D^\lambda u}{L^p(\Omega)}^p \right)^{\frac{1}{p}}.
$$
We refer to \cite{Ha.71,Ku.80.book,AmGiGiI.94} for detailed study of these
spaces. We introduce a specific subspace 
$$
\wso{p}{n}{\alpha}{\Pi}=\left\{v \in \ws{p}{n}{\alpha}{\Pi} \text{ s.t. } v\equiv0 \text{ on B}\right\}.
$$
We begin by some important properties satisfied by $\xi$ that will be used to 
prove convergence theorems \ref{5.1}, \ref{5.2} and \ref{5.3}.  
Such estimates will be obtained by a careful study of the 
weighted Sobolev properties of $\xi$ as well as its integral 
representation through a specific Green function.
\begin{thm}\label{DecayXi}
\uhthms, there exists $\xi$, a unique solution of problem \eqref{tau}.
Moreover $\xi \in \wso{1}{2}{\alpha}{\Pi}$ with $\alpha\in ]-\alpha_0,\alpha_0[$ where $\alpha_0:=(\sqrt{2}/ \pi)$ and 
$$
|\xi(y)| \leq \frac{K}{\rho(y)^{1-\frac{1}{2 M}}},\, \forall y \in \rr^2 \text{ s.t. } \rho>1, \quad \int_0^\infty \left| \dd{\xi}{y_1}(y_1,y_2) \right|^2 dy_1  \leq  \frac{ K}{ y_2^{1+2\alpha} },\, \forall y_2 \in \rr.
$$
where $M$ is a positive constant such that $M<1/(1-2\alpha)\sim10$.
\end{thm}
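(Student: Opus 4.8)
The plan is to split the analysis into three parts: (i) existence and uniqueness of $\xi$ in the weighted space $\wso{1}{2}{\alpha}{\Pi}$, (ii) the decay of the weighted Dirichlet integral of $\partial_{y_1}\xi$ on horizontal slices, and (iii) the pointwise $L^\infty$ bound. For (i), I would set up the variational formulation of \eqref{tau} in $\wso{1}{2}{\alpha}{\Pi}$: multiply $-\Delta\xi=0$ by a test function $v$ vanishing on $B$, integrate by parts, and pick up the Neumann data $\partial_{\bf n}\beta$ on $E$ as a boundary term $\int_E \partial_{\bf n}\beta\, v$. The key analytic input is a Hardy-type / weighted Poincaré inequality on the rough quarter-plane $\Pi$: because functions vanish on $B$, the seminorm $\snrm{\cdot}{\ws{1}{2}{\alpha}{\Pi}}$ controls the full norm, provided $\alpha$ avoids the critical exponents of the Laplacian on the quarter-plane — this is exactly where the restriction $\alpha\in(-\alpha_0,\alpha_0)$ with $\alpha_0=\sqrt2/\pi$ enters (the first nonzero exponent for the mixed Dirichlet–Neumann problem on a right angle). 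One then needs that the linear form $v\mapsto\int_E\partial_{\bf n}\beta\, v$ is continuous on $\wso{1}{2}{\alpha}{\Pi}$; this follows from the exponential decay of $\beta$ in $Z^+$ given by Theorem \ref{prop.A.cell} (the series $\sum\eta_k e^{iky_1-|k|y_2}$ shows $\partial_{\bf n}\beta(0,y_2)$ decays exponentially in $y_2$), together with a trace inequality $W^{1,2}_\alpha(\Pi)\hookrightarrow L^2_{\rm loc}(E)$ with at most polynomial weight. Lax–Milgram then yields a unique $\xi$.

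For (ii), I would exploit the harmonicity of $\xi$ away from $E$ and $B$ and test the equation against $\xi$ itself cut off above height $y_2=t$; differentiating the resulting energy identity in $t$, or more directly applying the same weighted-space estimate to the restriction of $\xi$ to the strip $\{y_2>t\}$, gives that $\int_{\{y_2>t\}}|D\xi|^2(1+\rho^2)^{\alpha}\,dy$ is finite and controls $t^{1+2\alpha}\int_0^\infty|\partial_{y_1}\xi(y_1,t)|^2\,dy_1$; a standard argument (the function $\phi(t)=\int_{\{y_2>t\}}|D\xi|^2$ satisfies a differential inequality $\phi(t)\le -C t\,\phi'(t)$, hence $\phi(t)\lesssim t^{-2\alpha}$, from which the slice estimate follows by monotonicity in $t$ of the one-dimensional integral after using a mean-value/interpolation step). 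This is the kind of Saint-Venant / Tartar energy-decay argument; the weighted formulation is what converts the bare decay into the sharp power $y_2^{-(1+2\alpha)}$.

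For (iii), the weighted-space information alone does not give a pointwise bound, so I would switch to an integral representation: construct the Green function $G$ for $-\Delta$ on $\Pi$ with Dirichlet data on $B$ and Neumann data on $E$ (near the corner it behaves like the Green function of the sector of opening $\pi$ with mixed conditions, whose leading singularity scales like $r^{\pi/(2\cdot\text{angle})}$), write $\xi(y)=\int_E G(y,z)\,\partial_{\bf n}\beta(z)\,d\sigma(z)$, and estimate using the exponential decay of the density. This yields $|\xi(y)|\lesssim \rho(y)^{-(1-\frac1{2M})}$ once one knows the growth exponent $M$ of the Green function at infinity; the constant $M<1/(1-2\alpha)\approx10$ is pinned down by a Phragmén–Lindelöf argument on the rough quarter-plane, comparing $\xi$ to the explicit harmonic function $r^{\lambda}\cos(\lambda\theta)$ type barriers adapted to the mixed boundary conditions and the local geometry of the rough bottom near $y_1=0$.

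The main obstacle I expect is part (iii), specifically the Phragmén–Lindelöf / Green-function analysis on the rough (non-smooth, non-Lipschitz-corner) quarter-plane: one must simultaneously control the corner singularity where $E$ meets $B$, the behavior at infinity in both the $Z^+$ direction and along $B$, and match the weighted-$L^2$ decay from (ii) with the pointwise estimate — reconciling the abstract exponent $\alpha_0=\sqrt2/\pi$ with the sharp decay rate $1-\frac1{2M}$ requires identifying $M$ precisely, and this is the delicate step. Parts (i) and (ii) are comparatively standard once the right weighted space and the exponential decay of the data are in hand.
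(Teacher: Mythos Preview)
Part (i) matches the paper's approach: weighted Poincar\'e on $\Pi$ using the Dirichlet condition on $B$, continuity of the Neumann form via the exponential decay of $\partial_{\bf n}\beta$, then an inf-sup argument with the test function $u\rho^{2\alpha}$; the window $|\alpha|<\alpha_0$ drops out of the Poincar\'e constant rather than from a corner-exponent analysis. Parts (ii) and (iii), however, contain real gaps. In (ii), the Saint-Venant inequality $\phi(t)\le -Ct\,\phi'(t)$ would need a Poincar\'e estimate on the horizontal half-line $\{y_2=t,\,y_1>0\}$ relating $\int|\xi|^2\,dy_1$ to $\int|\partial_{y_1}\xi|^2\,dy_1$; but the condition at $y_1=0$ is Neumann and the slice is unbounded, so no such inequality is available and the differential inequality does not close. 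In (iii), you propose the Green function on the rough domain $\Pi$, but there it is not explicit, so you cannot read off any decay rate; the difficulty you anticipate is in fact fatal for that formulation.

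The paper resolves both issues by the same manoeuvre: it moves the integral representation to the \emph{smooth} quarter-plane $\Pi'=\{y_1>0,\,y_2>0\}$, where the Green function is explicit (four reflected logarithms). The price is that $\xi$ does not vanish on $B'=\{y_2=0\}$, so the representation reads $\xi=N+D$ with $N$ the Neumann contribution from $E'$ and $D$ the Dirichlet contribution from the nonzero trace $\xi|_{B'}$. For the pointwise bound, $N$ is estimated directly from the Fourier series of $\beta$, and $D$ via a preliminary weighted $L^2$ bound on $\xi|_{B'}$ obtained by a polar Poincar\'e--Wirtinger argument; Phragm\'en--Lindel\"of then propagates the decay from $E'$ into the sector. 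For the slice estimate on $\partial_{y_1}\xi$, the same splitting is differentiated; the Dirichlet piece is controlled by a Kudryavtsev-type weighted fractional trace inequality
\[
\int_0^\infty\!\!\int_0^\infty \frac{|\xi(y_1+h,0)-\xi(y_1,0)|^2}{h^{\,2-2\alpha}}\,dy_1\,dh \;\le\; K\,\|\xi\|_{W^{1,2}_\alpha(\Pi)}^2,
\]
which is the key technical lemma and has no counterpart in your outline (the paper even remarks that replacing this fractional norm by the weighted $L^2$ trace bound leads to a divergent kernel integral). The exponent $M$ arises from a H\"older splitting in the Dirichlet-kernel estimate against this weighted trace, not from a corner singularity.
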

The proof follows  as a consequence of every result 
claimed until the end of subsection \ref{Xi}.
\begin{lem}\label{trace} In $\wso{1}{2}{\alpha}{\Pi}$ the semi-norm is a norm, moreover one has
$$
\nrm{\xi}{\ws{1}{2}{\alpha}{\Pi}}\leq \frac{1}{2 \alpha_0} \snrm{\xi}{\ws{1}{2}{\alpha}{\Pi}}, \quad \forall \alpha \in \R.
$$
On the vertical boundary $E$ one has the continuity
of the trace operator
$$
\nrm{\xi}{\ws{\ud}{2}{\alpha}{E}} \leq K \nrm{\xi}{\ws{1}{2}{\alpha}{\Pi}},\quad  \forall \alpha  \in \RR,
$$
the weighted trace norm being defined as
$$
\ws{\ud}{2}{0}{\partial \Pi}= \left\{ u \in \cD'(\partial \Pi) \text{ s.t. } \frac{u}{(1+\rho^2)^{\frac{1}{4}}} \in L^2(\partial \Pi),\, \int_{\partial \Pi_l^2} \frac{| u(y)-y(y')|^2}{|y-y'|^2} ds(y) ds(y') < + \infty \right\} ,
$$
and
$$
u \in \ws{\ud}{2}{\alpha}{\partial \Pi}  \iff (1+\rho^2)^{\frac{\alpha}{2}} u \in \ws{\ud}{2}{0}{\partial \Pi}.
$$
\end{lem}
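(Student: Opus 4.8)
The plan is to establish the three assertions in order: (i) a weighted Poincaré-type inequality on $\wso{1}{2}{\alpha}{\Pi}$ showing the semi-norm dominates the full norm with the explicit constant $1/(2\alpha_0)$; (ii) continuity of the trace operator onto the weighted space $\ws{\ud}{2}{\alpha}{E}$; and (iii) the characterization of $\ws{\ud}{2}{\alpha}{\partial\Pi}$ via multiplication by the weight $(1+\rho^2)^{\alpha/2}$. For (i), the key point is that functions in $\wso{1}{2}{\alpha}{\Pi}$ vanish on the bottom $B$, so for fixed $y_1$ one has $v(y_1,y_2) = \int_{f(y_1)}^{y_2} \partial_{y_2} v(y_1,s)\,ds$ along vertical segments contained in $\Pi$ (this is where the negative-definiteness of $f$ and the geometry of $\Pi$ matter — every vertical ray from $E$-height down to $B$ stays in $\Pi$). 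First I would prove the one-dimensional weighted Hardy inequality
\begin{equation*}
\int_{\R} \frac{|v(t)|^2}{(1+t^2)^{1-\varepsilon}}\,dt \;\le\; \frac{1}{(2\alpha_0)^2}\int_{\R} |v'(t)|^2 (1+t^2)^{\varepsilon}\,dt
\end{equation*}
for $v$ vanishing at one endpoint, with the sharp constant $1/(2\alpha_0)^2$ arising from the classical Hardy constant; integrating this over $y_1$ and summing over the horizontal derivative contributions gives the claimed bound. The value $\alpha_0 = \sqrt{2}/\pi$ should emerge from the optimal constant in the Hardy inequality adapted to the exponent $\alpha$, and I would need to check carefully that the weight $\rho$ (distance to the exterior point $(0,-1)$) is comparable to $1+|y|$ on $\Pi$ so the scalar Hardy estimate transfers.

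For (ii), I would use the standard trace theorem machinery in a localized form: cover $\Pi$ by dyadic annuli $\{\rho \sim 2^j\}$, on each of which the weight $(1+\rho^2)^{\alpha/2}$ is essentially constant, apply the unweighted trace inequality $\|u\|_{H^{1/2}(\partial)}\le K\|u\|_{H^1}$ on each piece (after rescaling to unit size, which is scale-invariant for the $H^{1/2}$–$H^1$ pairing in two dimensions), and reassemble. The Gagliardo seminorm in the definition of $\ws{\ud}{2}{0}{\partial\Pi}$ is handled by controlling near-diagonal contributions within each annulus by the local $H^1$ norm and far-diagonal contributions by the weighted $L^2$ norms; here one must verify the double integral over $\partial\Pi_l^2$ converges, using that the weight exponents on the two boundary points are comparable when $|y-y'|$ is small and that the kernel $|y-y'|^{-2}$ is integrable off-diagonal against the decay furnished by the weights. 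Assertion (iii) is then essentially a definitional unwinding: one writes $w = (1+\rho^2)^{\alpha/2} u$, checks that $w/(1+\rho^2)^{1/4}\in L^2$ iff $u/(1+\rho^2)^{(1+2\alpha)/4}\in L^2$, which matches the weighted $L^2$ condition, and that the Gagliardo seminorm of $w$ is finite iff the $\alpha$-weighted one of $u$ is, using that $\nabla_{\!s}(1+\rho^2)^{\alpha/2}$ is bounded by $K(1+\rho^2)^{(\alpha-1)/2}$ so the commutator terms are absorbed.

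The main obstacle I anticipate is pinning down the sharp constant $1/(2\alpha_0)$ with $\alpha_0=\sqrt 2/\pi$: this requires the exact best constant in a weighted one-dimensional Hardy inequality on a half-line (or full line) with power weights $(1+t^2)^{\pm\varepsilon}$, and then checking that the passage from the model weight $|t|^{\pm\alpha}$ (for which the Hardy constant is the familiar $2/|1-\text{(exponent)}|$ type expression) to $(1+\rho^2)^{\alpha/2}$ on the genuinely two-dimensional domain $\Pi$ with its rough bottom $B$ does not degrade the constant — or at least not below the stated value on the admissible range $\alpha\in\R$. A secondary technical point is the uniformity of the trace constant $K$ across the dyadic decomposition, which rests on the Lipschitz character of $\partial\Pi$ being scale-independent (the rescaled cells are uniformly Lipschitz because $f$ is a fixed Lipschitz function repeated periodically); I would state this uniform Lipschitz bound explicitly before invoking the local trace estimates.
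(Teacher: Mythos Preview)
The paper omits the proof of this lemma, giving only the hint that ``the homogeneous Dirichlet condition on $B$ allows to establish Poincar\'e--Wirtinger estimates \ldots\ in a quarter-plane containing $\Pi$'', and pointing to the argument of Lemma~\ref{BordsBpPoid} as a model. That argument passes to polar coordinates $(\rho,\theta)$ centred at the exterior point $(0,-1)$, extends functions of $\wso{1}{2}{\alpha}{\Pi}$ by zero to a sector, and applies a Wirtinger (one-sided Poincar\'e) inequality in the \emph{angular} variable $\theta$. This is the natural move because the weight in the space is precisely the radial coordinate $\rho$ of that polar system, so the weighted $L^2$ term $\int \rho^{2\alpha-2}|v|^2\,dy$ and the seminorm $\int \rho^{2\alpha}|\nabla v|^2\,dy$ become, in polar coordinates, integrals with the \emph{same} power of $\rho$ once the angular part $\rho^{-2}|\partial_\theta v|^2$ of the gradient is isolated; the Wirtinger constant on the angular interval then gives the factor, and constants of the form $\sqrt{2}/\pi$ arise exactly from eigenvalues of $-\partial_\theta^2$ on an interval.

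Your plan for (i) goes instead along vertical rays and invokes a one-dimensional Hardy inequality. This is where I see a gap. The weight $\rho=\sqrt{y_1^2+(y_2+1)^2}$ does not decouple along vertical lines: on the ray $\{y_1=c\}$ you would need a Hardy-type estimate with weight $(c^2+(y_2+1)^2)^{\gamma}$, whose best constant depends on $c$ and is not the classical $2/|1-\gamma|$; there is no mechanism by which $\alpha_0=\sqrt{2}/\pi$ (a number involving $\pi$, i.e.\ an eigenvalue) would emerge from Hardy constants, which are purely algebraic. Comparability of $\rho$ with $1+|y|$ does not help here --- it would only give equivalence of norms up to unspecified constants, not the stated $1/(2\alpha_0)$. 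So your approach would yield \emph{some} Poincar\'e-type bound, but not the one claimed, and that constant is exactly what is used downstream in Lemma~\ref{decay_estimates} to get the admissible range $|\alpha|<\alpha_0$. For (ii) your dyadic localisation is a legitimate route and would work; the paper's style (cf.\ Proposition~\ref{lift}) is rather to quote the weighted trace theorems of Hanouzet directly. Part (iii) is, as you say, a definitional check.
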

The proof is omitted: the homogeneous Dirichlet condition on $B$ allows to establish Poincar{\'e} Wirtinger estimates (\cite{Galdi}, vol. I page 56) in a quarter-plane containing $\Pi$. Nevertheless, similar arguments are also used in the proof of lemma \ref{BordsBpPoid}.
\begin{lem}\label{BordL2Poid}
The normal derivative $g:=\dd{\beta}{y_1}(0,y_2)$ is a linear form on $\wso{1}{2}{\alpha}{\Pi}$ 
for every $\alpha\in\R$. 
\end{lem}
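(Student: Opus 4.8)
The plan is to show that the linear form $v \mapsto \langle g, v\rangle := \int_E g(y_2)\, v(0,y_2)\, dy_2$, where $g = \partial\beta/\partial y_1(0,\cdot)$, is continuous on $\wso{1}{2}{\alpha}{\Pi}$ for every real $\alpha$. The strategy splits into two parts: first, a decay estimate for $g$ itself, coming from the exponential decay of $\beta$ in $Z^+$; second, a weighted trace/duality argument pairing this decay against the trace estimate of Lemma \ref{trace}.

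First I would exploit the explicit Fourier representation of $\beta$ on $Z^+$ furnished by Theorem \ref{prop.A.cell}, namely $\beta(y) = \sum_{k} \eta_k e^{iky_1 - |k| y_2}$ for $y_2 > 0$ (and the harmonic extension $H_P\eta$ below the interface $\Gamma$, which is a fixed smooth region). Differentiating term by term gives $\partial\beta/\partial y_1(0,y_2) = \sum_k i k\, \eta_k e^{-|k| y_2}$, so that for $y_2$ bounded away from $0$ (say $y_2 \ge 1$) one has pointwise exponential decay $|g(y_2)| \le K e^{-y_2}$, with $K$ controlled by $\|\eta\|_{H^{1/2}(\Gamma)}$; near the bottom, $g$ is merely bounded (by smoothness of $\beta$ up to $E$ in the fixed compact portion of $\Pi$, using $f$ Lipschitz). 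Consequently, for any fixed $\alpha \in \R$, the weighted quantity $(1+\rho^2)^{\frac{1-\alpha}{2}}\, g(y_2)$ lies in $L^2(E)$, i.e. $g \in \ws{-1/2}{2}{-\alpha}{E}$, which is precisely the dual space of $\ws{1/2}{2}{\alpha}{E}$ in the weighted duality with unweighted pivot $L^2(E)$.

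Then the estimate follows by chaining: for $v \in \wso{1}{2}{\alpha}{\Pi}$,
$$
|\langle g, v\rangle| = \left| \int_E g\, v(0,\cdot)\, dy_2 \right| \le \|g\|_{\ws{-1/2}{2}{-\alpha}{E}}\, \|v\|_{\ws{1/2}{2}{\alpha}{E}} \le K \|g\|_{\ws{-1/2}{2}{-\alpha}{E}}\, \|v\|_{\ws{1}{2}{\alpha}{\Pi}},
$$
where the first inequality is the weighted $(\ws{-1/2}{2}{-\alpha}{E}, \ws{1/2}{2}{\alpha}{E})$ duality pairing and the second is the trace continuity of Lemma \ref{trace}. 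Since $\|g\|_{\ws{-1/2}{2}{-\alpha}{E}} < \infty$ by the decay estimate above, this proves $g$ defines a bounded linear functional on $\wso{1}{2}{\alpha}{\Pi}$.

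The main obstacle I anticipate is making the weighted duality on the boundary $E$ fully rigorous: one must check that the weight exponent bookkeeping is consistent (the $\ws{-1/2}{2}{-\alpha}{E}$ norm must be defined as the dual of $\ws{1/2}{2}{\alpha}{E}$ with the correct pivot, matching the convention in Lemma \ref{trace}), and that the density of smooth compactly supported functions makes the pairing $\int_E g\, v\, dy_2$ well defined for all $v$ in the space, not just smooth ones. A secondary subtlety is that $E$ is a half-line $\{0\}\times]f(0),+\infty[$ rather than a full line, so one should verify that the homogeneous Dirichlet condition on $B$ (built into $\wso{1}{2}{\alpha}{\Pi}$) is what permits extending $v|_E$ by zero, or equivalently that no boundary term at the endpoint $y_2 = f(0)$ spoils the integration; this is exactly the mechanism already invoked in Lemma \ref{trace} and reused in Lemma \ref{BordsBpPoid}, so it can be quoted rather than redone.
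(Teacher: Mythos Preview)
Your overall strategy matches the paper's: split $g$ into an upper part on $E' = \{0\}\times\mathbb{R}_+$, handled via the explicit Fourier series of $\beta$, and a lower part supported on the compact segment $E_c = \{0\}\times[f(0),0]$, then pair with the trace inequality of Lemma~\ref{trace}. The exponential decay argument for $g_+$ and the weighted duality chain at the end are exactly what the paper does.

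There is, however, a real gap in your treatment of the lower part. You claim that near the bottom ``$g$ is merely bounded (by smoothness of $\beta$ up to $E$ \ldots\ using $f$ Lipschitz).'' A Lipschitz boundary $P^0$ does \emph{not} give smoothness of $\beta$ up to the boundary, and in particular does not give a pointwise bound on $\partial_{y_1}\beta$ as $y_2\to f(0)^+$; the endpoint $(0,f(0))$ lies on $P^0$ and solutions of Dirichlet problems in Lipschitz domains are in general only $H^1$ up to the boundary. The paper handles this correctly: it observes that $\beta$ restricted to the sublayer $P$ solves a Dirichlet problem with $H^1$ regularity, so that on the compact interface $E_c$ the normal derivative $\partial_{\bf n}\beta$ is only a continuous linear form on $H^{1/2}(E_c)$, i.e.\ an element of $H^{-1/2}(E_c)$. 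Compactness of $E_c$ then makes the weighted and unweighted $H^{1/2}$ norms equivalent there, for any $\alpha$, which closes the argument. You should replace the unjustified boundedness claim by this $H^{-1/2}$ trace argument; once that is done, your proof is essentially the paper's.
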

\begin{proof}
In $Z^+$, the upper part of the cell domain, the harmonic decomposition of $\beta$ 
allows to characterize its normal derivative explicitly on $E'$.
Indeed 
$$
g= \Re\left\{ \sum_{k=-\infty}^{+\infty} i k \eta_k e^{-|k| y_2} \right\} \chiu{E'}+ g_- =: g_+ + g_-
$$
where $g_-$ is a function whose support is located in $y_2\in [f(0),0]$. 
One has for the upper part
$$
\int_{E'}g_+^2 y_2^\alpha  dy_2 \leq K \nrm{\eta}{H^{\ud}(\Gamma)}^2, \quad \forall \alpha \in \R,
$$
thus $g_+$ is in the weighted $L^2$ space for any power of $(1+\rho^2)^\ud$, it is a linear form on $\ws{\ud}{2}{-\alpha}{E}$.
For $g_-$, we have no explicit formulation. We analyze the problem \eqref{A.cell}
but restricted to the bounded  sublayer $P$. We define $\beta_-$ to be harmonic
in $P$ satisfying $\beta_-=\eta$ on $\Gamma$, where $\eta$ is the trace on the fictitious interface
obtained in theorem \ref{prop.A.cell} and $\beta_-=-y_2$ on $P^0$. Note that thanks to standard regularity results $\eta \in C^2(\Gamma)$ because $\Gamma$ is strictly included in $\zup$, \cite{GiTru.Book}.
So $\beta$ solves a Dirichlet $y_1$-periodic problem in $P$ with regular data. As the
boundary is Lipschitz $\beta\in H^1(P)$ and so on the compact  interface $E_c:=\{0\}\times[f(0),0]$, $\partial_{\bf n}\beta$ is a linear form on $H^\ud$ functions. Then because $E_c$ is compact~:
$$
\int_{Ec} \dd{\beta}{\bf n} v dy_2
\leq \nrm{ \dd{\beta}{\bf n}}{H^{-\ud}(E_c)}\nrm{v}{H^{\ud}(E_c)}
\leq K \nrm{v}{\ws{\ud}{2}{\alpha}{E_c}},\quad \forall \alpha \in \R 
$$

\end{proof}

\begin{lem}\label{decay_estimates}
If $\alpha\in ]-\alpha_0:\alpha_0[$ 
there exists $\xi \in \wso{1}{2}{\alpha}{\Pi}$ a unique solution  of problem \eqref{tau}. 
\end{lem}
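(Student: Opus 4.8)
The strategy is to recast \eqref{tau} as a variational problem posed in the single Hilbert space $W:=\wso{1}{2}{\alpha}{\Pi}$ and to solve it by the Lax--Milgram lemma, the admissible range $\alpha\in]-\alpha_0,\alpha_0[$ being forced precisely by the resulting coercivity constant. We shall use two facts from Lemma~\ref{trace}: on $W$ the seminorm $\snrm{\cdot}{\ws{1}{2}{\alpha}{\Pi}}$ is an equivalent norm, and one has the weighted Hardy inequality $\nrm{(1+\rho^2)^{\frac{\alpha-1}{2}}v}{L^2(\Pi)}\le\frac{1}{2\alpha_0}\,\snrm{v}{\ws{1}{2}{\alpha}{\Pi}}$; and one fact from Lemma~\ref{BordL2Poid}: the normal-derivative trace $g:=\partial_{y_1}\beta(0,\cdot)$ defines a continuous linear form on $\wso{1}{2}{\alpha}{\Pi}$ and, the lemma being valid for every value of the weight index, also on $\wso{1}{2}{-\alpha}{\Pi}$.

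First I would introduce the weighted duality map $T:v\mapsto(1+\rho^2)^{\alpha}v$. Using $|\nabla\rho|\le1$, $\rho\le(1+\rho^2)^{1/2}$ and the membership $(1+\rho^2)^{\frac{\alpha-1}{2}}v\in L^2(\Pi)$ valid for $v\in W$, one checks that $T$ is a bounded isomorphism from $\wso{1}{2}{\alpha}{\Pi}$ onto $\wso{1}{2}{-\alpha}{\Pi}$, with norm depending only on $\alpha$ and $\alpha_0$, and that it preserves the homogeneous Dirichlet condition on $B$. The natural weak form of \eqref{tau} reads: find $\xi\in\wso{1}{2}{\alpha}{\Pi}$ such that $\int_\Pi\nabla\xi\cdot\nabla\varphi\,dy=\langle g,\varphi\rangle_E$ for all $\varphi\in\wso{1}{2}{-\alpha}{\Pi}$, the right-hand side encoding the Neumann datum $\partial_{\bf n}\xi=\partial_{\bf n}\beta$ on $E$. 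Substituting $\varphi=Tv$ turns this into: find $\xi\in W$ with $a(\xi,v)=L(v)$ for all $v\in W$, where
$$
a(u,v):=\int_\Pi\nabla u\cdot\nabla\bigl((1+\rho^2)^{\alpha}v\bigr)\,dy,\qquad L(v):=\langle g,(1+\rho^2)^{\alpha}v\rangle_E .
$$
Continuity of $a$ on $W\times W$ follows from Cauchy--Schwarz and the bound on $T$; continuity of $L$ follows from Lemma~\ref{BordL2Poid} applied with index $-\alpha$ together with the trace estimate of Lemma~\ref{trace}.

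The crux is coercivity. Expanding the derivative of the weight gives
$$
a(v,v)=\snrm{v}{\ws{1}{2}{\alpha}{\Pi}}^2+2\alpha\int_\Pi\rho\,(1+\rho^2)^{\alpha-1}(\nabla\rho\cdot\nabla v)\,v\,dy,
$$
and bounding the cross term by $2|\alpha|\,\snrm{v}{\ws{1}{2}{\alpha}{\Pi}}\,\nrm{(1+\rho^2)^{\frac{\alpha-1}{2}}v}{L^2(\Pi)}$ and invoking the Hardy inequality of Lemma~\ref{trace} yields
$$
a(v,v)\ge\Bigl(1-\frac{|\alpha|}{\alpha_0}\Bigr)\,\snrm{v}{\ws{1}{2}{\alpha}{\Pi}}^2,
$$
which is strictly positive exactly when $|\alpha|<\alpha_0$, the announced range. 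Lax--Milgram then produces a unique $\xi\in W$ with $a(\xi,v)=L(v)$ for all $v\in W$; since $T$ maps $W$ onto $\wso{1}{2}{-\alpha}{\Pi}$, reading the identity back through $T$ gives $\int_\Pi\nabla\xi\cdot\nabla\varphi\,dy=\langle g,\varphi\rangle_E$ for every $\varphi\in\wso{1}{2}{-\alpha}{\Pi}$; testing against $\varphi\in\cD(\Pi)$ shows $-\Delta\xi=0$ in $\Pi$, the boundary term recovers $\partial_{\bf n}\xi=\partial_{\bf n}\beta$ on $E$ in the weak sense, and $\xi=0$ on $B$ is built into $\wso{1}{2}{\alpha}{\Pi}$. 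Uniqueness is part of the Lax--Milgram conclusion.

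The main obstacle, and where the preparatory lemmas are spent, is the coercivity step: one needs (i) the Poincar\'e--Hardy inequality with the sharp constant $1/(2\alpha_0)$, for which the homogeneous Dirichlet condition on $B$ is essential since it removes the constants and harmonic functions that would otherwise belong to the kernel, and (ii) control of the term produced by differentiating the weight $(1+\rho^2)^{\alpha}$, which is exactly what forces $|\alpha|<\alpha_0$. The remaining verifications --- that $T$ is an isomorphism, and the density of smooth functions vanishing near $B$ used to recover the equation and the natural condition on $E$ --- are routine in the weighted Sobolev setting of \cite{Ha.71,Ku.80.book,AmGiGiI.94}. An alternative would be to quote directly the weighted-space isomorphism theorems of Amrouche--Girault--Giroire \cite{AmGiGiI.94}, but the mixed Neumann/Dirichlet condition here makes the self-contained Lax--Milgram argument more transparent.
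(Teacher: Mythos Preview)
Your argument is correct and is essentially the paper's own proof. The paper sets $v=u\rho^{2\alpha}$ and invokes the Babu{\v s}ka--Ne{\v c}as inf--sup theorem directly, whereas you package the same substitution as an isomorphism $T:v\mapsto(1+\rho^2)^{\alpha}v$ from $\wso{1}{2}{\alpha}{\Pi}$ onto $\wso{1}{2}{-\alpha}{\Pi}$ and then apply Lax--Milgram on the single space $W$; since $\rho$ is bounded below on $\Pi$ the two weights are equivalent, and the core computation --- expanding the gradient of the weight, bounding the cross term by the Hardy--Poincar{\'e} constant $1/(2\alpha_0)$ from Lemma~\ref{trace}, and arriving at the coercivity constant $1-|\alpha|/\alpha_0$ --- is identical.
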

\begin{proof}
The weak formulation of problem \eqref{tau} reads
$$
(\nabla \xi,\nabla v)_{\Pi}= \left(g , v\right)_E,\quad \forall\, v  \in C^\infty(\Pi),
$$
leading to check hypothesis of the abstract inf-sup extension of 
the Lax-Milgram theorem \cite{Ne.Book.67,Ba.Siam.76}, for 
$$
a(u,v)=\int_{\Pi_+} \nabla u \cdot \nabla v\, dy, \quad l(v)=\int_{f(0)}^{+\infty} \dd{\beta}{\bf n} v \, dy_2.
$$
By lemma \ref{BordL2Poid}, $l$ is a linear form on $\ws{1}{2}{\alpha}{\Pi}$.
It remains to prove the inf-sup like condition on the bilinear form $a$. For this purpose
we set $v=u \rho^{2\alpha}$ and we look for a lower estimate of $a(u,v)$.
$$
a(u,u\rho^{2\alpha}) =\int_{\Pi_+} \nabla u \cdot  \nabla \left( u\rho^{2\alpha} \right) dy  =  \snrm{u}{\ws{1}{2}{\alpha}{\Pi}}^2 + {2\alpha} \int_{\Pi_+} \rho^{{2\alpha}-1}   u \nabla u\cdot \nabla \rho \, dy
$$
Using H{\"o}lder estimates one has
$$
\begin{aligned}
\int_{\Pi_+} \rho^{{2\alpha}-1}   u \nabla u \cdot \nabla \rho dy &\leq 
\left( \int_{\Pi_+} \rho^{{2\alpha}}  \left(\frac{u}{\rho}\right)^2 dy \right)^\ud  \left( \int_{\Pi_+} \rho^{{2\alpha}}  \left|\nabla u\right|^2 dy \right)^\ud \leq \frac{1}{{2\alpha}_0}\snrm{u}{\ws{1}{2}{\alpha}{\Pi}}^2
\end{aligned}
$$
In this way one gets
$$
a(u,u\rho^{2\alpha}) \geq (1 - \frac{\alpha}{\alpha_0} ) \snrm{u}{\ws{1}{2}{\alpha}{\Pi}}^2
$$
and if $\alpha< \alpha_0$ the inf-sup condition is fulfilled, the rest of the proof is standard and left to the reader \cite{Ba.Siam.76}.
\end{proof}
Thanks to the Poincar{\'e} inequality in $\Pi\setminus \Pi'$ 
with $\alpha=0$, we have
\begin{coro}
If a function $\xi$ belongs to $\wso{1}{2}{0}{\Pi}$  it satisfies $\xi \in L^2(B')$. 
\end{coro}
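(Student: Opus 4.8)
The statement to prove is: if $\xi\in\wso{1}{2}{0}{\Pi}$, then $\xi\in L^2(B')$, where $B'=\rr\times\{0\}$ is the horizontal axis. The plan is to exploit the homogeneous Dirichlet condition encoded in $\wso{1}{2}{0}{\Pi}$ together with a slicing/Poincaré argument on the thin strip-like region $\Pi\setminus\Pi'$ that lies between the rough bottom $B$ and the flat line $B'$. Recall $\Pi$ is the rough quarter-plane (bottom boundary $B$ given by translates of the graph $P^0$), while $\Pi'$ is the smooth quarter-plane $\rr_+^2$ in the relevant orientation; the difference $\Pi\setminus\Pi'$ is a bounded-height layer squeezed between the graph $y_2=f(\cdot)$ (on which $\xi\equiv0$) and the axis $y_2=0$, and by the hypothesis $1-\delta<f<\delta$ this layer has height uniformly bounded above and below.

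First I would fix a point $y_1$ (modulo the obvious periodic-cell structure of $B$) and look at the vertical segment from the bottom boundary point $(y_1,f(y_1))$ up to $(y_1,0)$. Since $\xi$ vanishes on $B$, the fundamental theorem of calculus along this segment gives
$$
\xi(y_1,0)=\int_{f(y_1)}^{0}\dd{\xi}{y_2}(y_1,t)\,dt,
$$
whence by Cauchy–Schwarz $|\xi(y_1,0)|^2\le |f(y_1)|\int_{f(y_1)}^{0}|\partial_{y_2}\xi(y_1,t)|^2\,dt$. This is exactly the one-dimensional Poincaré inequality invoked in the statement ("the Poincaré inequality in $\Pi\setminus\Pi'$ with $\alpha=0$"): the Dirichlet condition on $B$ controls the trace on $B'$ by the vertical gradient in the intervening layer. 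Integrating in $y_1$ over $\rr$ and using that $|f|$ is bounded by a constant yields
$$
\nrm{\xi}{L^2(B')}^2\le K\int_{\Pi\setminus\Pi'}|\nabla\xi|^2\,dy\le K\snrm{\xi}{\ws{1}{2}{0}{\Pi}}^2<\infty,
$$
the last bound because on the bounded-height region $\Pi\setminus\Pi'$ the weight $(1+\rho^2)^0=1$ is harmless and the full $\ws{1}{2}{0}{\Pi}$ seminorm is finite by hypothesis (and is itself a norm on $\wso{1}{2}{0}{\Pi}$ by Lemma~\ref{trace} with $\alpha=0$).

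The main technical point — really the only thing needing care — is the regularity justification for writing the fundamental-theorem-of-calculus identity for a function that is only a priori in $\wso{1}{2}{0}{\Pi}$: one argues by density of smooth functions vanishing on $B$ in this weighted space (the trace on $B'$ being continuous from $\ws{1}{2}{0}{\Pi}$ into $L^2(B')$ by the same Poincaré estimate applied to approximants, so the estimate passes to the limit), and one checks that the rough, merely Lipschitz, shape of $B$ does not obstruct the vertical-slice argument — here the hypothesis $1-\delta<f<\delta$ guaranteeing a layer of positive, bounded thickness is what makes the slicing work uniformly in $y_1$. With these standard points dispatched, the corollary follows.
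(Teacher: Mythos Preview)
Your proposal is correct and matches the paper's own (one-line) justification: the paper simply invokes the Poincar\'e inequality in the bounded-height sublayer $\Pi\setminus\Pi'$ with $\alpha=0$, and your vertical-slice argument from the Dirichlet boundary $B$ up to $B'$, followed by integration in $y_1$, is precisely that inequality spelled out. The only cosmetic slip is that the $y_1$-integration runs over $[0,\infty)$ rather than all of $\rr$, since $\Pi$ (and hence $B'$) lives in the right half-plane.
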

To characterize the weighted behavior of $\xi$ on $B'$ we set
$\omega_\alpha(y_1)=(y_1^2+1)^{\frac{2\alpha-1}{2}}y_1$ and we give
\begin{lem} \label{BordsBpPoid}If $\xi$ in $\wso{1}{2}{\alpha}{\Pi}$ then
$\xi \in L^2(B', \omega_\alpha):=\{ u \in \cD'(B') \text{ s.t. } \int_{B'} \xi^2 \omega_\alpha dy_1 < \infty\}$.
\end{lem}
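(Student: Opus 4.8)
The plan is to establish a weighted trace inequality on $B'=\rr\times\{0\}$ analogous to the one on the vertical boundary $E$ given in Lemma \ref{trace}, exploiting in an essential way the homogeneous Dirichlet condition $\xi\equiv 0$ on $B$ (the rough bottom). The key point is that between $B$ and $B'$ the function $\xi$ vanishes on a nearby curve, so a one-dimensional fundamental-theorem-of-calculus argument in the $y_2$ variable will control the trace of $\xi$ on $B'$ by the weighted $L^2$-norm of $\partial_{y_2}\xi$ over the thin rough layer $\Pi\setminus\Pi'$, and then by the full seminorm $\snrm{\xi}{\ws{1}{2}{\alpha}{\Pi}}$.

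First I would write, for almost every $y_1\in\rr$ and for $y_2=0$,
$$
\xi(y_1,0) = \int_{f(y_1)}^{0} \dd{\xi}{y_2}(y_1,s)\, ds,
$$
which is legitimate because $\xi(y_1,f(y_1))=0$ on $B$ (in the trace sense, which is meaningful since $\xi\in H^1_{\rm loc}(\Pi)$). Squaring and applying Cauchy--Schwarz on the interval $[f(y_1),0]$, whose length is bounded above by $1$ thanks to the bounds $1-\delta<f<\delta$ on $f$, gives
$$
|\xi(y_1,0)|^2 \le \int_{f(y_1)}^{0} \left|\dd{\xi}{y_2}(y_1,s)\right|^2 ds.
$$
On the strip $f(y_1)\le s\le 0$ the weight $\rho(y_1,s)=\sqrt{y_1^2+(s+1)^2}$ is comparable to $(1+y_1^2)^{1/2}$, with constants depending only on $\delta$ (since $|s+1|$ is bounded above and below by positive constants there); hence $(1+\rho^2)^{\alpha}$ is comparable to $(y_1^2+1)^{\alpha}$ uniformly in $s$ on this strip. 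Multiplying the previous display by $\omega_\alpha(y_1)=(y_1^2+1)^{\frac{2\alpha-1}{2}}y_1$ — note $|\omega_\alpha(y_1)|\le (y_1^2+1)^{\alpha}$ — and integrating over $y_1\in\rr$:
$$
\int_{B'}\xi^2\,\omega_\alpha\, dy_1 \le \int_{\rr}(1+y_1^2)^{\alpha}\int_{f(y_1)}^{0}\left|\dd{\xi}{y_2}\right|^2 ds\, dy_1 \le K\int_{\Pi\setminus\Pi'}(1+\rho^2)^{\alpha}|\nabla\xi|^2\, dy \le K\,\snrm{\xi}{\ws{1}{2}{\alpha}{\Pi}}^2 < \infty,
$$
where the last inequality uses that the exponent on the weight in the seminorm is $\alpha-m+|\lambda|=\alpha-1+1=\alpha$ for first-order derivatives. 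This shows $\xi\in L^2(B',\omega_\alpha)$.

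The main obstacle is purely a matter of care rather than depth: one must check that the boundary trace of $\xi$ on $B'$ and on $B$ is well defined and that the integration along vertical segments is justified for a $\ws{1}{2}{\alpha}{\Pi}$ function — this follows from a density argument (smooth functions vanishing on $B$ are dense in $\wso{1}{2}{\alpha}{\Pi}$, by the same localization used implicitly in Lemma \ref{trace}) together with the fact that on the bounded rough layer the weighted norm is equivalent to the ordinary $H^1$ norm, so the classical trace theory applies there. The only genuinely structural ingredients are the uniform bounds on $f$ (giving both the bounded segment length and the comparability of the two weights $\rho$ and $(1+y_1^2)^{1/2}$ on the strip) and the Dirichlet condition on $B$; everything else is the Cauchy--Schwarz / Fubini computation above. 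I expect the write-up to mirror closely the argument announced after Lemma \ref{trace}.
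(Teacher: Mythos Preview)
Your proof is correct, but it proceeds along a genuinely different path from the paper's. You integrate \emph{vertically} in Cartesian coordinates from the rough bottom $B$ (where $\xi=0$) up to $B'$, apply Cauchy--Schwarz on the segment $[f(y_1),0]$ of bounded length, and then use that on this thin strip the weight $(1+y_1^2)^{\alpha}$ is comparable to $(1+\rho^2)^{\alpha}$. The paper instead passes to polar coordinates $(\rho,\theta)$ centered at $(0,-1)$, extends $\xi$ by zero below $B$ so that it vanishes on the half-line $\theta=0$, and integrates \emph{angularly} (a Wirtinger/Poincar\'e estimate in $\theta$) from $\theta=0$ up to $B'$, which sits at the small angle $\theta_0=\arcsin(1/\rho)$. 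The identity $\rho\,d\rho=y_1\,dy_1$ on $B'$ then turns $\rho^{2\alpha}d\rho$ into exactly $\omega_\alpha(y_1)\,dy_1$, which explains the otherwise peculiar form of the weight.

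Both arguments are at heart the same fundamental-theorem-of-calculus plus Cauchy--Schwarz trick, merely run in different coordinate systems. Your Cartesian route is arguably more elementary (no zero extension, no change of variables), while the paper's polar route has the conceptual payoff of showing \emph{why} the weight $\omega_\alpha$ is the natural one: it is simply $\rho^{2\alpha}$ against the radial measure. One small cosmetic point: your integrals over $y_1\in\rr$ should be over $y_1\ge 0$, since $\Pi$ (and hence $B'$ as the relevant trace set) lives in the right half-plane; this does not affect the argument.
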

\begin{proof}
$\Pi$ is contained in a set $\rr \times \{ - 1, +\infty\}$. We map the latter 
with cylindrical coordinates $(\rho,\theta)$. Every function of $\wso{1}{2}{0}{\Pi}$, extended
by zero on the complementary set of $\Pi$, belongs to the space of functions vanishing on the half-line $\theta=0$.
Using Wirtinger estimates, one
has for every such a function.
$$
\int_1^{+\infty} \xi^2 \left( \rho, \arcsin \left( \frac{1}{\rho} \right)\right) \rho^{2\alpha} d\rho \leq  \int_{1}^\infty \int_0^{\frac{\pi}{2}}\rho^{{2\alpha}-1} \arcsin\left( \frac{1}{\rho} \right) \left| \dd{\xi}{\theta } \right|^2 \rho d\theta d\rho \leq K \nrm{\xi}{\ws{1}{2}{\alpha}{\Pi}}^2
$$
because on $B'$, $\rho d\rho = y_1 d y_1$, one gets the desired result.
\end{proof}
In order to derive local and global $L^\infty$ estimates we introduce in this part
a representation formula of $\xi$ on $\Pi'$. As long as we use the representation formula below, $x$
will be the symmetric variable to the integration variable $y$. Until the end of proposition
\ref{deriv.horiz} both $x$ and $y$ are microscopic variables living in $\Pi$.

\begin{lem}\label{EstimXi}
 The solution of problem \eqref{tau} satisfies
$\xi(y) \leq K \rho^{-1+\frac{1}{2M}}$ for every $y \in \Pi'$ such that $\rho(y)\geq 1$.
The constant $M$ can be chosen such that $M<1/(1-{2\alpha})\sim 10$.
\end{lem}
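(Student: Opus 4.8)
The plan is to combine the global weighted energy bound $\xi\in\wso{1}{2}{\alpha}{\Pi'}$ (valid for all $\alpha\in]-\alpha_0,\alpha_0[$ by Lemma \ref{decay_estimates}) with a local elliptic regularity / mean–value argument to upgrade an averaged decay into a pointwise one. First I would fix a point $y_0\in\Pi'$ with $R:=\rho(y_0)\geq 2$ and work in the ball $B_{R/2}(y_0)$; since $\beta$ is harmonic away from the bottom and the Neumann data on $E$ is supported in $y_2\in[f(0),0]$, for $R$ large the ball $B_{R/2}(y_0)$ does not meet the support of $g$, so $\xi$ is harmonic there (for balls hitting $E'$ away from that support one uses the even reflection across $E'$, the Neumann condition being homogeneous there). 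Then by the mean–value property and Cauchy–Schwarz,
\[
|\xi(y_0)|^2 \leq \frac{C}{R^{2}}\int_{B_{R/2}(y_0)} |\xi(y)|^2\,dy .
\]
On $B_{R/2}(y_0)$ one has $\rho(y)\sim R$, hence $(1+\rho^2)^{(\alpha-1)/2}\sim R^{\alpha-1}$ there, and the right-hand side is bounded by
\[
\frac{C}{R^{2}}\, R^{\,2(1-\alpha)} \int_{B_{R/2}(y_0)} (1+\rho^2)^{\alpha-1}|\xi|^2\,dy \;\leq\; C\,R^{-2\alpha}\,\nrm{\xi}{\ws{1}{2}{\alpha}{\Pi}}^2 ,
\]
so $|\xi(y_0)|\leq K R^{-\alpha}$. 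This already gives algebraic decay; taking $\alpha$ as close to $\alpha_0=\sqrt2/\pi\approx0.45$ as one likes gives a rate just below $\rho^{-\alpha_0}$.

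To reach the sharper exponent $-1+\tfrac{1}{2M}$ with $M<1/(1-2\alpha)\sim 10$ (i.e. a rate arbitrarily close to $\rho^{-1+1/20}\sim\rho^{-0.95}$, better than $\rho^{-\alpha_0}$), I would instead exploit the representation formula announced just before the lemma: writing $\xi$ on $\Pi'$ via the Green function $G$ of the quarter plane (Dirichlet on $B'$, Neumann on $E'$), one has $\xi(y)=\int_{E_c}\partial_{\bf n}\beta(0,y_2)\,G(y,(0,y_2))\,dy_2$, an integral over the \emph{compact} segment $E_c=\{0\}\times[f(0),0]$ only, since $g_+$ on $E'$ is itself exponentially decaying and contributes a harmless term (its contribution can be estimated directly from its explicit Fourier series $g_+=\Re\sum ik\eta_k e^{-|k|y_2}$). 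The Green function of a quarter plane with mixed (Dirichlet/Neumann) boundary data decays, at distance $\rho$ from the reflected source and from a source sitting near the corner, like the harmonic function in a wedge of opening $\pi/2$ that vanishes on one side: by separation of variables / conformal mapping $z\mapsto z^{2}$ this behaves like $\rho^{-1}$ up to the angular factor, and more precisely one extracts a decay $\rho^{-(1-\epsilon)}$ for any $\epsilon>0$. The Fragmen–Lindelöf principle (applied in the wedge, using that $\xi$ vanishes on $B'$ and has controlled Neumann trace on $E'$) is what pins the exponent at $-1+\tfrac{1}{2M}$: one compares $\xi$ with the barrier $\rho^{-1+1/(2M)}\varphi(\theta)$ where $\varphi$ solves the associated Sturm–Liouville eigenvalue problem on $[0,\pi/2]$ with the mixed boundary conditions, the admissible $M$ being exactly those for which this barrier is super/sub-harmonic and dominates $\xi$ on $\{\rho=1\}$ (finite by the trace estimate of Lemma \ref{trace} and the corollary) — the constraint $M<1/(1-2\alpha)$ encodes the matching between the barrier exponent and the weighted space $\wso{1}{2}{\alpha}{\Pi'}$ to which $\xi$ is already known to belong.

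The main obstacle is the careful construction and decay estimate of the mixed Green function near the corner of the quarter plane, together with making the Fragmen–Lindelöf comparison rigorous: one must verify that $\xi$ is genuinely bounded on the arc $\{\rho=1\}\cap\Pi'$ (this follows from the trace estimates and interior regularity, but needs the behaviour near $B'$, handled by the Dirichlet condition and reflection), and that no faster-growing wedge solution is hidden — this is where the specific opening angle $\pi/2$ and the precise form of the boundary conditions enter, fixing $\alpha_0=\sqrt2/\pi$ and hence the admissible range of $M$. The passage from $\Pi'$ (the smooth quarter plane where the representation lives) back to the rough domain $\Pi$ is comparatively routine once the rough bottom $B$, being $\epsilon$-periodic and Lipschitz, is handled by the same homogeneous-Dirichlet reflection/Poincaré argument already used in Lemmas \ref{trace} and \ref{BordsBpPoid}.
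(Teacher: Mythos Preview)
Your representation formula is incomplete, and this is where the argument breaks. You write $\xi(y)=\int_{E_c}\partial_{\bf n}\beta\,G\,dy_2$ (plus the harmless $g_+$ piece), but this would hold only if $\xi$ vanished on $B'=\{y_2=0\}$. It does not: the Dirichlet condition $\xi=0$ is imposed on $B$, the \emph{rough} bottom $\{y_2=f(y_1)\}$ lying strictly below $B'$, so the trace $\xi(\cdot,0)$ is a nontrivial function and the correct representation on $\Pi'$ is
\[
\xi(x)=\underbrace{\int_{E'}\Gamma_x\,g\,dy_2}_{N(x)}+\underbrace{\int_{B'}\partial_{\bf n}\Gamma_x\,\xi(y_1,0)\,dy_1}_{D(x)} .
\]
Your treatment of the Neumann part is essentially right and yields $N\leq K/r$; the paper gets the same. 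But $D(x)$ is the hard term and is entirely missing from your sketch.

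This is not a detail: the exponent $-1+\tfrac{1}{2M}$ and the constraint $M<1/(1-2\alpha)$ come precisely from estimating $D(x)$. The paper splits $D$ into a near part controlled by $\nrm{\xi}{L^2(B')}$ and a far part; on the far part it pairs the weighted trace bound $\xi\in L^2(B',\omega_\alpha)$ of Lemma \ref{BordsBpPoid} with a H\"older inequality of exponents $(M,M')$, and integrability of $y_1^{-2\alpha M'}$ forces $2\alpha M'>1$, i.e.\ $M<1/(1-2\alpha)$. This gives $|D(x)|\leq K\,x_2^{-1+1/(2M)}$. The Fragmen--Lindel\"of step is then used only to propagate this \emph{vertical} decay (valid on $E'$, together with $\xi=0$ on $B$) to radial decay in $\rho$ over the whole sector; it does not fix the exponent via an angular eigenvalue matching as you suggest.

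Your first argument (mean value plus the weighted energy) is fine where it applies and does give $|\xi|\lesssim\rho^{-\alpha}$, but it is strictly weaker than the claim and also needs care near $B'$: for $y_0$ with $\rho(y_0)=R$ and small $y_{0,2}$ the ball $B_{R/2}(y_0)$ descends to $y_2\approx -R/2$, well below the rough bottom (depth $<1$), so it exits $\Pi$ and neither the mean value nor a clean reflection is available there.
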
 
\begin{proof}
We set the representation formula
\begin{equation}\label{repres}
\xi(x) = \int_{E'} \gx g(y_2) dy_2 + \int_{B'} \dd{\gx}{\bf n} \xi(y_1,0) dy_1=: N(x) + D(x)  , \quad \forall x \in \Pi',
\end{equation}
where the Green function for the quarter-plane is 
$$
\gx(y) = \frac{1}{4\pi }\left( \ln |x-y| +\ln |x^*-y| -\ln |x_*-y| -\ln |\ov{x}-y| \right),
$$
with $x = (x_1,x_2),x^* = (-x_1,x_2),x_* = (x_1,-x_2),\ov{x} = (-x_1,-x_2)$.

\paragraph{  The Neumann part $N(x)$.}
We make the change of variables $x=(r \cos \vartheta ,r \sin \vartheta)$ which gives
$$\begin{aligned}
N&:= \lim_{m\to \infty} \sum_{k=0}^{m} \eta_k N_k
= \lim_{m\to \infty}\sum_{k=0}^{m} \frac{\eta_k}{2\pi } \int_0^\infty ke^{-ky_2} 
\left( 
\ln (x_1^2 + (y_2-x_2)^2) 
-  \ln (x_1^2 + (y_2+x_2)^2) 
\right) \, d y_2 \\
& =\lim_{m\to \infty} \frac{1}{2 \pi } \sum_{k=1}^m  \eta_k \int_0^\infty ke^{-ky_2}\left( 
 \ln \left(1-\frac{2 s y_2 }{r} + \left(\frac{y_2 }{r} \right)^2 \right)
-\ln \left(1+\frac{2 s y_2 }{r} + \left(\frac{y_2 }{r} \right)^2 \right) 
\right) dy_2 ,
\end{aligned}
$$
where $c=\cos \vartheta, s=\sin \vartheta $. Now we perform the second change of variables $t_k=e^{-k y_2}$ and get
$$
N_k \leq \frac{1}{\pi } \int_0^1 \ln \left ( 1 - \frac{2 s \ln t_k  }{r} + \left(\frac{\ln t_k }{r}\right)^2 \right) dt_k
\leq \frac{1}{\pi} \int_0^1 \ln\left(\left(1-\frac{\ln t}{r}\right)^2\right) dt 
$$
The last rhs is independent of $k$; one easily estimates it using the change of variables
$y=-\ln t/r$, indeed:
$$
\int_0^1 \ln\left(\left(1-\frac{\ln t}{r}\right)^2\right) dt = \int_0^\infty  \ln (1+y) e^{-r y} r dy = \int_0^\infty \frac{e^{-ry}}{1+y} dy \leq \frac{1}{r}
$$
Now because the fictitious interface $\Gamma$ is strictly included in $\zup$, $\beta \in H^2_{\loc}(\zup)$ 
and thus $\nrm{\eta}{H^1(\Gamma}:=\sum_{k=1}^\infty |\eta_k|^2 k^2 < + \infty$, one has for every finite $m$ 
$$
\begin{aligned}
\sum_{k=1}^m | \eta_kN_k | \leq \left( \sum_{k=1}^\infty  |\eta_k|^2 k^2 \right)^\ud \left( \sum_{k=1}^{\infty} \frac{1}{k^2} \right)^\ud  \leq C \nrm{\eta}{H^1(\Gamma)} \frac{1}{r}
\end{aligned}
$$
the estimate being uniform wrt $m$ one has
 that $N\leq C \nrm{\eta}{H^{1}(\Gamma)} / r$.

\paragraph{ The Dirichlet part $D(x)$.} We have, by the same change of variable as above ($x:=r(\cos(\vartheta),\sin \vartheta):=r(c,s)$~:
$$
\begin{aligned}
D(x) &= - \frac{x_2}{2\pi} \int_0^\infty  \left(  \frac{1}{(y_1-x_1)^2 + x_2^2} +    \frac{1}{(y_1+x_1)^2 + x_2^2} \right) \xi (y_1,0) d y_1 \\
& \leq \frac{s}{\pi r} \int_0^\infty \frac{1}{1 -2 c \frac{y_1}{r} + \left(\frac{y_1}{r}\right)^2 } | \xi | dy_1 
=  \frac{s}{\pi r} \int_0^\infty \frac{1}{(1 -c^2) +  \left(c-\frac{y_1}{r}\right)^2} | \xi | dy_1 ,
\end{aligned}
$$
where we suppose that $c<1$. We divide this integral in two parts, we set $m>1$
$$
D(x) \leq    \frac{ s}{\pi r} \left[ \int_0^m \frac{1}{(1 -c^2) +  \left(c-\frac{y_1}{r}\right)^2} | \xi | dy_1 +    \int_m^\infty \frac{1}{(1 -c^2) +  \left(c-\frac{y_1}{r}\right)^2} | \xi | dy_1 \right] =:(I_1 +I_2)(x) .
$$
For $I_1$ one uses the $L^p_{\rm loc} $ inclusions~:
$$ 
I_1 \leq  \frac{ s}{\pi r} \frac{K}{1-c^2} \nrm{\xi(\cdot,0) }{L^1(0,m)} \leq \frac{2}{\pi x_2} K \nrm{\xi }{L^2(B')},
$$
while for $I_2$ one uses the weighted norm established in  lemma \ref{BordsBpPoid}
$$
\begin{aligned}
I_2 &\leq  \frac{ s}{\pi r} \left( \int_m^\infty \left(\frac{1}{(1 -c^2) +  \left(c-\frac{y_1}{r}\right)^2}\right)^2 \frac{(y_1^2+1)^{\frac{1-{2\alpha}}{2}}}{ y_1 } dy_1 \right)^\ud \nrm{\xi}{L^2(B',\omega_\alpha)} \\
& \leq  \frac{2 s}{\pi r}  \left( \int_m^\infty \left(\frac{1}{(1 -c^2) +  \left(c-\frac{y_1}{r}\right)^2}\right)^2 y_1^{-{2\alpha}} dy_1 \right)^\ud \nrm{\xi}{L^2(B',\omega_\alpha)} \\
& \leq \frac{2 s}{\pi r}\left( \left(\int_m^\infty \left(\frac{1}{(1 -c^2) +  \left(c-\frac{y_1}{r}\right)^{2}}\right)^{2M}dy_1 \right)^{\frac{1}{M}} \left( \int_m^\infty  y_1^{-{2\alpha} M'} dy_1\right)^{\frac{1}{M'}} \right)^\ud \nrm{\xi}{L^2(B',\omega_\alpha)},
\end{aligned}
$$
where $M$ and $M'$ are H{\"o}lder conjugates.
We choose $M'{2\alpha} >1$ such that the weight contribution provided by $\xi$ is integrable, 
this implies that $M<1/(1-{2\alpha})\sim 10$. One then recovers
easily 
$$
I_2 \leq \frac{2 s K}{\pi r} \left( \frac{\pi r}{(1-c^2)^{2M-\ud}} \right)^{\frac{1}{2M}} = \frac{2K}{(\pi x_2)^{1-\frac{1}{2M}}}.
$$
We could shift the fictitious interface $\Gamma$ to $\Gamma - \delta e_2$ and repeat again the same arguments 
because the rough boundary does not intersect it. Note that in this case
we could establish again the  explicit Fourier representation formula for $\beta$ 
and its derivative as in theorem \ref{prop.A.cell}. Thus we can obtain 
that $\xi \leq c (x_2+\delta)^{1-1/(2M)}$ which shows that $\xi$ is bounded in $\Pi'$. 
So that on E', one has $|\xi| \rho^{1-\frac{1}{2M}} =|\xi| (1+x_2)^{1-\frac{1}{2M}} \leq c'$. Here one applies the Fragm{\`e}n-Lindel{\"o}f technique (see \cite{BoVe},  lemma 4.3, p.12). We restrict the
domain to a sector, defining $\Pi_S=\Pi\cap S$, where $S=\{ (\rho,\theta)\in [1,\infty]\times [0,\pi/2]\}$.
On $\Pi_S$, we define $\varpi:=-1+1/(2M)$ and $v:=\rho^{\varpi} \sin (\varpi \theta)$ the latter  is harmonic definite positive, we set $w=\xi/v$ which solves 
$$
\Delta  w + \frac{2}{v} \nabla v\cdot \nabla w=0, \text{ in }S_\Pi,  
$$
with  $w=0$ on $B_S =\partial S\cap B$, whereas $w$ is bounded uniformly on $E_S:=E\cap S$. 
Because by standard regularity arguments $\xi\in C^2(\Pi)$, $w$ is also bounded when $\rho=1$. 
Then by the Hopf maximum principle, we have 
$$
\sup_{\Pi_S} |w| \leq \sup_{\partial \Pi_S } w \leq K
$$
which extends to the whole domain $\Pi_S$, the radial decay of $\xi$.
\end{proof}

Deriving the representation formula \eqref{repres}, one gets for all $x$ strictly included
in $\Pi'$ that
$$
\partial_{x_1} \xi (x) = \int_{E'} \partial_{x_1} \Gamma_x  \, g dy_2 + \int_{B'} \dd{}{x_1} \dd{\Gamma_x}{y_2} \xi(y_1,0) dy_1 =: N_{x_1} (x) + D_{x_1}(x),\quad \forall x \in \Pi'
$$
\begin{lem}\label{EstimDerivXi}
For any $x\in \Pi'$, the Neumann part of the normal derivative $\partial_{x_1} \xi$ satisfies 
$N_{x_1}(x) \leq K r^{-2}$ for all $x$ in   $\Pi'$
\end{lem}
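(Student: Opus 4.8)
The argument will parallel that of Lemma \ref{EstimXi}: expand the Neumann datum $g$ into harmonic modes and estimate the resulting integrals mode by mode. First I would differentiate the quarter-plane Green function and restrict to the vertical axis $E'=\{y_1=0\}$, which gives the explicit kernel
$$
\partial_{x_1}\Gamma_x(0,y_2)=\frac{x_1}{2\pi}\left(\frac{1}{x_1^2+(y_2-x_2)^2}-\frac{1}{x_1^2+(y_2+x_2)^2}\right)=\frac{2\,x_1 x_2\,y_2}{\pi\,d_1\,d_2},
$$
where $d_1:=x_1^2+(y_2-x_2)^2$ and $d_2:=x_1^2+(y_2+x_2)^2$. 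Two facts will be used repeatedly: the kernel carries a factor $y_2$ in its numerator (it vanishes on $B'$, which is precisely the gain that upgrades the $r^{-1}$ of Lemma \ref{EstimXi} to $r^{-2}$ here), and $d_2\ge x_1^2+x_2^2=r^2$ for every $y_2\ge0$ (recall that points of $\Pi'$ have positive coordinates). Writing, as in the proof of Lemma \ref{BordL2Poid}, $g=g_++g_-$ with $g_+=\sum_{k\ge1}c_k e^{-ky_2}$ on $\{y_2>0\}$, $|c_k|\le 2k|\eta_k|$ coming from the harmonic expansion of $\beta$ in $Z^+$, and $g_-$ supported in $y_2\in[f(0),0]$, the statement reduces to estimating $\partial_{x_1}M_k(x):=\int_0^\infty \partial_{x_1}\Gamma_x(0,y_2)\,e^{-ky_2}\,dy_2$ for each $k\ge1$, together with the (easier) contribution of $g_-$.

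For $\partial_{x_1}M_k$ I would split the $y_2$-integral at $y_2=r/2$. On $\{0\le y_2\le r/2\}$ a short dichotomy on the angular position of $x$ — either $x_1\ge r/2$, so that $d_1\ge x_1^2\ge r^2/4$, or $x_1<r/2$, so that $x_2>\tfrac{\sqrt3}{2}r$ and then $|y_2\pm x_2|\ge r/4$, whence $d_1\ge r^2/16$ — yields $x_1/d_1\le C/r$; combined with $x_2/d_2\le x_2/r^2\le 1/r$ this gives the pointwise bound $|\partial_{x_1}\Gamma_x(0,y_2)|\le C\,y_2/r^2$, so this portion of $M_k$ is at most $\frac{C}{r^2}\int_0^\infty y_2 e^{-ky_2}\,dy_2=\frac{C}{k^2 r^2}$. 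On $\{y_2>r/2\}$ I would instead use $x_2y_2/d_2\le 1/4$ (from $d_2\ge(y_2+x_2)^2\ge 4x_2y_2$) together with $\int_{\rr}\frac{x_1}{d_1}\,dy_2=\pi$ and $e^{-ky_2}\le e^{-kr/2}$, bounding this portion by $Ce^{-kr/2}$. Hence $|\partial_{x_1}M_k(x)|\le \frac{C}{k^2 r^2}+Ce^{-kr/2}$, and summing against $|c_k|\le 2k|\eta_k|$,
$$
\Big|\sum_{k\ge1}c_k\,\partial_{x_1}M_k(x)\Big|\le \frac{C}{r^2}\sum_{k\ge1}\frac{|\eta_k|}{k}+C\sum_{k\ge1}k|\eta_k|e^{-kr/2}\le \frac{K}{r^2}\,\nrm{\eta}{H^1(\Gamma)}\qquad(r\ge1),
$$
the first series being finite by Cauchy--Schwarz ($\sum k^{-4}<\infty$) and the second being $O(e^{-r/2})$.

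It then remains to handle the part of $N_{x_1}$ coming from $g_-$ and the range of bounded $r$. For $r\ge2$ the point $x$ lies at distance at least $r-1\ge r/2$ from the compact set $E_c=\{0\}\times[f(0),0]$, so $y_2\mapsto\partial_{x_1}\Gamma_x(0,y_2)$ is smooth on $E_c$ with $\nrm{\partial_{x_1}\Gamma_x(0,\cdot)}{H^{\ud}(E_c)}\le C/r^2$ (from $d_1,d_2\ge r^2/4$ and $x_1x_2\le r^2/2$, the $y_2$-derivatives only improving the bound), and therefore the corresponding term is $\le C\,\nrm{\partial_{\bf n}\beta}{H^{-\ud}(E_c)}/r^2$, finite by Lemma \ref{BordL2Poid}; for bounded $r$ the estimate is immediate since $N$ is harmonic in $\Pi'$ with at worst a logarithmic growth of its gradient at the corner of $\Pi'$. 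I expect the only delicate point to be the dichotomy producing $x_1/d_1\le C/r$ uniformly in the angular position of $x$: the kernel $\partial_{x_1}\Gamma_x(0,\cdot)$ is genuinely unbounded — it concentrates near $y_2=x_2$ as $x$ tends to $E'$ — so the decay is only recovered after integration, the splitting at $y_2\sim r$ serving to isolate the ``bulk'' $\{d_1,d_2\gtrsim r^2\}$ from the tail where the density $g_+$ is exponentially small; everything else, including the $g_-$ piece, is routine once one checks that the test function stays regular because $x$ is far from $E_c$.
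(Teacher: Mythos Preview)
Your argument is correct and takes a genuinely different route from the paper's. The paper first obtains the crude, angularly non-uniform bound $|N_{x_1}(x)|\le C/(x_1x_2)$ by a direct estimate of the kernel, then observes that $N_{x_1}$ is harmonic with bounded boundary values ($N_{x_1}=g$ on $E'$, $N_{x_1}=0$ on $B'$) and upgrades the decay to $Kr^{-2}$ by a Phragm\'en--Lindel\"of argument: $\Pi'\setminus B(0,1)$ is split into three angular sectors, and in each sector the quotient $N_{x_1}/v_i$, with $v_i\in\{\pm r^{-2}\cos 2\vartheta,\; r^{-2}\sin 2\vartheta\}$, is shown to be bounded via the Hopf maximum principle. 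Your approach bypasses the maximum principle entirely: the splitting at $y_2=r/2$ together with the angular dichotomy on $x$ produces the uniform pointwise bound $|\partial_{x_1}\Gamma_x(0,y_2)|\le Cy_2/r^2$ on the near region directly, the far region is controlled by the exponential decay of the modes, and summation in $k$ closes the estimate. Your method is more elementary and yields the $r^{-2}$ decay in one pass, at the price of a slightly longer computation; the paper's comparison argument is more structural and would transfer to any harmonic quantity once a crude interior bound and boundary control are available. One minor remark: your treatment of $g_-$ is unnecessary, since by definition $N(x)=\int_{E'}\Gamma_x\,g\,dy_2$ involves $g$ only on $E'=\{y_2>0\}$, where $g=g_+$; the paragraph on $E_c$ is therefore harmless but redundant.
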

\begin{proof}
 On $E'$ the  derivative wrt $x_1$ of the Green kernel
reads
$$
\partial_{x_1} \Gamma_x = x_1 \left( \frac{1}{x^2_1 + (x_2-y_2)^2} - \frac{1}{x^2_1 + (x_2+y_2)^2} \right)
$$
thus using the cylindrical coordinates to express $x=(r\cos \vartheta,r\sin\vartheta )=:r(c,s)$ for $0\leq s<1$ one gets
\begin{equation}\label{esti_grosse}
\begin{aligned}
 N_{x_1} (x) &= 
\frac{c}{r} \int_0^\infty \left(\frac{1}{1-2s \frac{y_2}{r} +\left(\frac{y_2}{r} \right)^2} - \frac{1}{1+2s \frac{y_2}{r} +\left(\frac{y_2}{r} \right)^2} \right) g(y_2) dy_2 \\
& = \sum_k \frac{c}{r} \int_0^\infty \frac{4 s \frac{ y_2}{r}}{4s^2(1-s^2) +  \left(1-2s^2-\left(\frac{y_2}{r} \right)^2\right)^2} e^{-|k|y_2} dy_2\\
& \leq 4 \sum_k \frac{1}{x_1x_2} \int_0^\infty y_2 e^{-|k|y_2} dy_1 \leq \frac{1}{x_1x_2}\sum_k \frac{4}{k^2} |\eta_k|^2 \leq \frac{4}{x_1x_2} \nrm{\eta}{H^{-1}(\Gamma)}.
\end{aligned}
\end{equation}
This estimate is not optimal since it is singular near $x_1=0$ or $x_2=0$. But
it provides useful decay estimates inside $\Pi'$. 

It's easy to check that $N_{x_1}$ is harmonic, $N_{x_1}=g$ on $E'$, and that it
vanishes on $B'$. Because on $E'$ $g$ is bounded, by the maximum principle
$N_{x_1}$ is bounded.
We divide $\Pi'\setminus B(0,1)$ in three angular sectors~:
$$
S_i=\{ (r,\vartheta) \text{ s.t. } r>1, \, \vartheta \in \left[ \vartheta_{i-1},\vartheta_i \right] \},
\quad (\vartheta_i)_{i=0}^3 = \left\{ 0,  \frac{\pi}{6},\frac{\pi}{3},\frac{\pi}{2} \right\} 
$$
For $S_1$ and $S_3$ we define $v_i:=\pm \rho^{-2} \cos(2\vartheta),\, i=1,2$ which is positive definite
and harmonic, while for $S_2$,  we set $v_i:=\rho^{-2} \sin(2\vartheta)$, that shares the same properties.
For each sector we define $w_i=N_{x_1}/v_i$, it solves
$$
\Delta w_i + \frac{2}{v_i} \nabla v_i \cdot \nabla w_i = 0, \text{ in } S_i, \quad i=1,\dots,3
$$
The estimate \eqref{esti_grosse}  shows that on each interior boundary $\partial S_i$, $w$ is bounded
while on $E'\cup B'$ it is bounded by the boundary conditions that $N_{x_1}$ 
satisfies. By the Hopf maximum principle \cite{GiTru.Book}, one shows that
$$
\sup_{S_1} w_1 \leq \sup_{\vartheta=\frac{\pi}{6}} w_1 < \infty,\quad 
\sup_{S_2} w_2 \leq \sup_{\vartheta=\frac{\pi}{6},\vartheta=\frac{\pi}{3}} w_2 < \infty,\quad 
\sup_{S_3} w_3 \leq \sup_{\vartheta=\frac{\pi}{3},\vartheta=\frac{\pi}{2}} w_3 < \infty
$$
which implies that $N_{x_1}\leq K r^{-2}$ for every $y\in\Pi_S':=\Pi'\cap S$.
\end{proof}
In order to estimate $D_{x_1}$ the latter term of the derivative, we introduce
the  lemma inspired by proofs of weighted Sobolev imbeddings in  \cite{KudrI,KudrII}.
\begin{lem}\label{kudr.prop}
If $\xi \in \ws{1}{2}{\alpha}{\Pi}$ with $\alpha \in [0,1/2[$ then its trace on a horizontal
interface satisfies
\begin{equation}\label{kudr}
I(\xi)=\int_0^\infty \int_0^\infty \frac{\left|\xi(y_1+h,0)-\xi(y_1,0)\right|^2}{h^{2-{2\alpha}}} \, dy_1\, dh \leq \nrm{\xi}{\ws{1}{2}{\alpha}{\Pi}}
\end{equation}
\end{lem}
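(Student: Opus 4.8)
The plan is to reduce the double integral $I(\xi)$ over the half-line $B' = \{y_2 = 0\}$ to a weighted one-dimensional estimate, and then bound it by the $\ws{1}{2}{\alpha}{\Pi}$-seminorm using the fact that the trace of $\xi$ on $B'$ is controlled by the full gradient of $\xi$ in the two-dimensional strip adjacent to $B'$. First I would write the finite difference as an integral of the tangential derivative,
$$
\xi(y_1+h,0) - \xi(y_1,0) = \int_0^h \partial_{y_1}\xi(y_1+t,0)\, dt,
$$
so that by Cauchy–Schwarz $|\xi(y_1+h,0)-\xi(y_1,0)|^2 \le h \int_0^h |\partial_{y_1}\xi(y_1+t,0)|^2\, dt$. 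Substituting into $I(\xi)$ and using Fubini to integrate the kernel $h^{-2+2\alpha}$ against the fixed inner integrand, one is led — after exchanging the order of integration in $(h,t)$ and using $0 \le \alpha < 1/2$ so that $\int h^{-1+2\alpha}\,dh$ converges near $h=0$ — to a bound of the form $I(\xi) \le C\int_0^\infty |\partial_{y_1}\xi(y_1,0)|^2\, \psi(y_1)\, dy_1$ for an explicit weight $\psi$. The delicate point is the \emph{growth of the weight at infinity}: the Hardy-type rearrangement produces a weight that is essentially $(1+y_1)^{2\alpha}$ on $B'$, which is exactly the class of weights compatible with $\ws{1}{2}{\alpha}{\Pi}$; the restriction $\alpha<1/2$ is what keeps the $h$-integral near the origin convergent, while $\alpha \ge 0$ controls the behavior near $y_1=0$ where the distance weight $\rho$ stays comparable to a constant.

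Next I would pass from the trace of $\partial_{y_1}\xi$ on $B'$ back to a volume quantity in $\Pi$. Since $\xi$ vanishes on $B$ (the rough bottom) and $\Pi$ contains a neighborhood of $B'$ of unit width, a standard trace/Gagliardo argument — the same Poincaré–Wirtinger machinery invoked for Lemma \ref{trace} and Lemma \ref{BordsBpPoid} — gives
$$
\int_0^\infty |\partial_{y_1}\xi(y_1,0)|^2 (1+y_1^2)^{\alpha}\, dy_1 \le K\, \nrm{\xi}{\ws{1}{2}{\alpha}{\Pi}}^2,
$$
because on $B'$ one has $\rho \sim (1+y_1^2)^{1/2}$, so the weight $(1+\rho^2)^{\alpha}$ appearing in the $\ws{1}{2}{\alpha}{}$ norm of $\partial_{y_1}\xi$ matches the $1$D weight from the previous step. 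Combining the two displays yields \eqref{kudr}. One could alternatively argue directly in two dimensions: extend $\xi$ by zero below $B'$ (legitimate since $\xi|_B = 0$), pass to cylindrical coordinates $(\rho,\theta)$ centered at the exterior point $(0,-1)$ as in the proof of Lemma \ref{BordsBpPoid}, and recognize $I(\xi)$ as (a piece of) the Gagliardo seminorm of the trace on the line $\theta = 0$, which is dominated by the weighted $H^1$ seminorm in the angular sector.

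The main obstacle I expect is not any single estimate but the careful \emph{bookkeeping of the two competing weights}: the homogeneity weight $h^{-2+2\alpha}$ in the difference quotient and the distance-to-$(0,-1)$ weight $(1+\rho^2)^{\alpha}$ built into the space, together with verifying that they are simultaneously admissible precisely on the range $\alpha\in[0,1/2[$. In particular one must check that the constant $K$ produced by the Fubini/Hardy step does not blow up as $\alpha \to 1/2^-$ faster than is acceptable, and that the lower endpoint $\alpha = 0$ is genuinely included (this is the case used in the corollary preceding the lemma and needed for the $L^\infty$ bounds of Lemma \ref{EstimXi}). The rest — Cauchy–Schwarz on the difference quotient, Fubini, the trace inequality for functions vanishing on $B$ — is routine and parallels arguments already used in the excerpt, so I would only sketch it and refer to \cite{KudrI,KudrII} for the weighted-imbedding template.
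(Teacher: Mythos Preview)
Your one-dimensional strategy has a genuine gap at two distinct places, and the paper's proof takes a fundamentally different (two-dimensional) route precisely to avoid them.

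\medskip
\textbf{First gap: the $h$-integral.} After writing the difference as $\int_0^h \partial_{y_1}\xi(y_1+t,0)\,dt$ and applying Cauchy--Schwarz, Fubini in $(h,t)$ leaves you with $\int_t^\infty h^{-1+2\alpha}\,dh$, which \emph{diverges at infinity} for every $\alpha\ge 0$; the restriction $\alpha<1/2$ governs the origin, not infinity. If instead of Cauchy--Schwarz you use the Hardy inequality (which is the natural tool for $|\int_0^h f|^2 h^{-2+2\alpha}$), the computation goes through and you obtain
\[
I(\xi)\;\le\;C\int_0^\infty |\partial_{y_1}\xi(s,0)|^2\Bigl(\int_0^s h^{2\alpha}\,dh\Bigr)\,ds
\;=\;\frac{C}{2\alpha+1}\int_0^\infty |\partial_{y_1}\xi(s,0)|^2\, s^{1+2\alpha}\,ds,
\]
so the weight produced is $s^{1+2\alpha}$, not $(1+s)^{2\alpha}$ as you asserted. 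This is one full power of $s$ too many to be dominated by the $\ws{1}{2}{\alpha}{\Pi}$ seminorm, whose gradient weight on $B'$ behaves like $y_1^{2\alpha}$.

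\medskip
\textbf{Second gap: the trace of the gradient.} Even granting the right weight, your step~5 requires
\[
\int_0^\infty |\partial_{y_1}\xi(y_1,0)|^2 (1+y_1^2)^{\alpha}\,dy_1 \;\le\; K\,\nrm{\xi}{\ws{1}{2}{\alpha}{\Pi}}^2,
\]
but for a generic $\xi\in \ws{1}{2}{\alpha}{\Pi}$ the tangential derivative $\partial_{y_1}\xi$ has a trace on $B'$ only in a negative-order space (the trace of $\xi$ lies in $H^{1/2}$-type spaces, so its tangential derivative lies in $H^{-1/2}$), and the $L^2$-weighted integral above need not be finite. This inequality would require $\ws{2}{2}{\alpha}{}$-regularity, which is not assumed.

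\medskip
\textbf{How the paper proceeds.} The paper bypasses both obstructions by lifting the boundary difference into the interior: writing $y_1=r$, $h=r\tan\vartheta$, it inserts the intermediate points $(r+h,\vartheta)$ and $(r,\vartheta)$ with $\vartheta=\arctan(h/r)$ and splits $|\xi(r+h,0)-\xi(r,0)|^2$ into an angular piece (controlled by $\partial_\vartheta\xi$ via Hardy in $\vartheta$, this is where $\alpha<1/2$ enters through the constant $4/(1-2\alpha)^2$) and a radial piece (controlled by $\partial_r\xi$ via the generalized Minkowski inequality). Only \emph{interior} first derivatives of $\xi$ appear, and these are exactly what the $\ws{1}{2}{\alpha}{\Pi}$ norm controls. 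Your closing ``alternative'' paragraph gestures at this idea but does not carry it out; the substance of the argument is precisely the polar-coordinate splitting and the Hardy estimate in the angular variable.
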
 
The proof follows ideas of theorem 2.4' in \cite{KudrII} p. 235, we give it for sake of self-containtness.

\begin{proof}
We make a change of variables $x_1=r\cos \vartheta,x_2=r \sin \vartheta,\vartheta =0 $, leading to rewrite $I$ as
$$
I=\int_0^\infty \int_0^\infty \frac{\left|\xi(r+h,0)-\xi(r ,0)\right|^2}{h^{2-{2\alpha}}} \, dr  \, dh ,
$$
note that the second space variable for $\xi$ is now $\vartheta = 0$.
We insert intermediate terms  inside the domain, namely
$$
\begin{aligned}
I & \leq K \left\{ \int\int_0^\infty \frac{ \left| \xi (r+h,0) - \xi \left(r+h,\atan\frac{h}{r}\right) \right|^2}{h^{2-{2\alpha}}} +\frac{ \left| \xi \left(r+h,\atan\frac{h}{r}\right) - \xi \left(r,\atan\frac{h}{r}\right) \right|^2}{h^{2-{2\alpha}}}  dr  \, dh  \right. \\
& \left. + \int_0^\infty \int_0^\infty \frac{ \left| \xi \left(r,\atan\frac{h}{r}\right) - \xi \left(r,0\right) \right|^2}{h^{2-{2\alpha}}}  dr  \, dh \right\} =: I_1 + I_2 + I_1' \\
\end{aligned}
$$
Obviously the terms $I_1$ and $I_1'$ are treated the same way. We make a change of variable $(r,h=r \tan \vartheta )$ 
$$
I_1 = \int_0^{\frac{\pi}{2}} \int_0^\infty \frac{ \left| \xi (r(1+\tan \vartheta),0) - \xi (r(1+\tan\vartheta),\vartheta  ) \right|^2}{(r \tan \vartheta )^{2-{2\alpha}}}\frac{r}{1+\vartheta^2}  dr  \, d\vartheta .  \\
$$
In order to eliminate the dependence on $\vartheta$ in the first variable of $\xi$, we then make the change of variable
$(r=\tilde{r}/(1+\tan\vartheta),\vartheta)$ which gives
$$
I_1 = \int_0^{\frac{\pi}{2}}\int_0^\infty  \frac{ \left| \xi (\tilde{r},0) - \xi (\tilde{r},\vartheta  ) \right|^2}{\tilde{r}^{2-{2\alpha}}} \tilde{r} d\tilde{r}  \frac{(1+\tan\vartheta)^{{2\alpha}}}{\tan^{2-{2\alpha}} \vartheta (1 + \vartheta^2) } \,d\vartheta 
\leq  \int_0^\infty \int_0^{\frac{\pi}{2}} \frac{ \left| \xi (\tilde{r},0) - \xi (\tilde{r},\vartheta  ) \right|^2}{\tilde{r}^{2-{2\alpha}}} \tilde{r} d\tilde{r} \,\frac{d\vartheta}{\vartheta^{2-{2\alpha}}}.
$$
We are in the hypotheses of the Hardy inequality (see for instance \cite{KudrI}, p.203 estimate (7)), thus we have
$$
\begin{aligned}
I_1& \leq \frac{4}{(1-{2\alpha})^2} \int_0^\infty  \int_0^{\frac{\pi}{2}} \left| \dd{\xi}{\vartheta }(\tilde{r},\vartheta)  \right|^2 \vartheta^{{2\alpha}}d\vartheta  \tilde{r}^{{2\alpha} -1} d\tilde{r} \leq K  \int_0^\infty  \tilde{r}^{2\alpha} \int_0^{\frac{\pi}{2}} \frac{1}{\tilde{r}^2} \left| \dd{\xi}{\vartheta }(\tilde{r},\vartheta)  \right|^2 d\vartheta \tilde{r} d\tilde{r} \\
& \leq \nrm{\xi}{\ws{1}{2}{\alpha}{\Pi}}^2,
\end{aligned}   
$$
in the last estimate we used that in $\Pi'$, the distance to the fixed point 
$(0,-1)$ can be estimated as $\rho^{2\alpha}:=(y_1^2+(y_2+1)^2)^{\alpha}\geq (y_1^2+y_2^2)^{\alpha}=: \tilde{r}^{2\alpha}$, this explains 
why we need a positive $\alpha$ in the hypotheses.
In the same manner
$$
\begin{aligned}
I_2 & \leq   \int_0^{\frac{\pi}{2}} \int_0^\infty \frac{ \left| \xi (r(1+\tan \vartheta),\vartheta ) - \xi (r,\vartheta  ) \right|^2}{(r \tan \vartheta )^{2-{2\alpha}}}r  dr  \, d\vartheta, \\
& =  \int_0^{\frac{\pi}{2}} \int_0^\infty \left| \int_0^{r \tan \theta } \dd{\xi}{r} ( r + s ,\vartheta ) ds \right|^2 r^{{2\alpha}-1}  dr   \frac{d \vartheta }{\tan^{2-{2\alpha}}\vartheta },\\
& \leq \int_0^{\frac{\pi}{2}} \left\{ \int_0^{\tan \vartheta } \left[  \int_0^\infty  \left| \dd{\xi}{r} \right|^2 ( r(1 + \sigma ) ,\vartheta ) r^{ 1+{2\alpha}} dr \right]^\ud d \sigma \right\}^2  \frac{d \vartheta }{\tan^{2-{2\alpha}}\vartheta  },
\end{aligned}
$$
where we made the change of variables $s=r t$ and applied the generalized Minkowski inequality (\cite{KudrI}, p.203 estimate (6)). Now we set
$\tilde{r}=r(1+t)$ inside the most interior integral above
$$
I_2\leq  \int_0^{\frac{\pi}{2}} \left\{ \int_0^{\tan \vartheta }\frac{dt }{(1+t)^{1+\alpha}}  \left[  \int_0^\infty  \left| \dd{\xi}{r} \right|^2 (\tilde{r},\vartheta) \tilde{r}^{1+{2\alpha}} d\tilde{r}\right]^\ud \right\}^2  \frac{d \vartheta }{\tan^{2-{2\alpha}} \vartheta } .
$$
Now, we have  separated the integrals in $t$ and $r$, the part depending on $t$  is easy to integrate.
Thus we obtain
$$
I_2 \leq \int_0^{\frac{\pi}{2}} {\cal S}(\vartheta)   \int_0^\infty  \left| \dd{\xi}{r} \right|^2 (\tilde{r},\vartheta) \tilde{r}^{1+{2\alpha}} d\tilde{r}  d\vartheta ,\text{ where }  {\cal S}(\vartheta)= \frac{1}{\tan^{2-{2\alpha}}\vartheta }  \left[ \frac{1}{(1+\tan \vartheta )^{\alpha}} -1 \right]^2 
$$ 
Distinguishing whether $\tan \vartheta$ is greater or not than 1, it is possible to show that ${\cal S}$ is uniformly bounded wrt $\vartheta$. This gives
 the desired result.
\end{proof}

We follow similar arguments as in the proof of theorem 8.20 p. 144 in \cite{Kr.Book.99}
to claim:
\begin{prop}\label{deriv.horiz}
Set $\xi$ a function belonging to $\ws{1}{2}{\alpha}{\Pi}$, and 
$$
D_{x_1}(x) := \int_0^\infty  G(x,y_1) \xi(y_1,0) dy_1,\quad \forall x \in \Pi'  ,\text{ where } G(x,y_1)= \left.\dd{}{x_1} \dd{\Gamma_x }{y_2}\right|_{y\in B'},
$$
then it satisfies for every fixed positive $h$ 
$$
\int_0^\infty | D_{x_1}(x_1,h) |^2  dx_1 \leq \frac{K}{h^{1+{2\alpha}}},
$$
where the constant $K$ is independent on $h$.
\end{prop}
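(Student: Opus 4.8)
The plan is to reduce the bound to a one–dimensional Fourier–multiplier estimate. First I would make the kernel $G$ completely explicit: differentiating the quarter–plane Green function $\Gamma_x$ of \eqref{repres} once in $y_2$ along $B'$ and once in $x_1$ gives, up to a fixed nonzero constant $c_0$, $G(x,y_1)=\partial_{x_1}\bigl[P_{x_2}(x_1-y_1)+P_{x_2}(x_1+y_1)\bigr]$, where $P_h(t):=\tfrac1\pi\tfrac{h}{t^2+h^2}$ is the one–dimensional Poisson kernel, the second summand being the contribution of the Neumann image across $E'$. Denoting by $\tilde\xi$ the even extension of $\xi(\cdot,0)$ from $]0,\infty[$ to $\R$, the change $y_1\mapsto-y_1$ in the image term turns the defining integral of $D_{x_1}$ into a genuine convolution on the line: $D_{x_1}(x_1,h)=c_0\,(P_h'\ast\tilde\xi)(x_1)$ for $x_1>0$, with $P_h'=\partial_t P_h$. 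Since $\xi(\cdot,0)$ is locally square–integrable with at most polynomial growth (Lemma \ref{BordsBpPoid}), $\tilde\xi$ is a tempered distribution, so this identity persists for all $x_1\in\R$ and Fourier analysis is available.

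I would then estimate $\int_0^\infty|D_{x_1}(\cdot,h)|^2\le c_0^2\int_\R|(P_h'\ast\tilde\xi)|^2$ by Plancherel. Using $\widehat{P_h}(\zeta)=e^{-h|\zeta|}$, hence $\widehat{P_h'}(\zeta)=i\zeta\,e^{-h|\zeta|}$, the right–hand side equals $c_0^2\int_\R\zeta^2 e^{-2h|\zeta|}|\widehat{\tilde\xi}(\zeta)|^2\,d\zeta$. Splitting $\zeta^2 e^{-2h|\zeta|}=|\zeta|^{1-2\alpha}\cdot\bigl(|\zeta|^{1+2\alpha}e^{-2h|\zeta|}\bigr)$ and bounding the second factor, after the substitution $\sigma=h|\zeta|$, by $\bigl(\sup_{\sigma\ge0}\sigma^{1+2\alpha}e^{-2\sigma}\bigr)h^{-(1+2\alpha)}=:C_\alpha\,h^{-(1+2\alpha)}$ (finite because $1+2\alpha>0$), one gets $\int_0^\infty|D_{x_1}(\cdot,h)|^2\le K\,h^{-(1+2\alpha)}\,[\tilde\xi]^2_{\dot H^{s}(\R)}$ with $s:=(1-2\alpha)/2\in\,]0,1/2]$.

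It remains to dominate $[\tilde\xi]_{\dot H^s(\R)}$ by $\nrm{\xi}{\ws{1}{2}{\alpha}{\Pi}}$. By the Gagliardo identity, $[\tilde\xi]^2_{\dot H^s(\R)}$ is comparable to $\int_\R\int_\R|\tilde\xi(x+t)-\tilde\xi(x)|^2\,|t|^{-(2-2\alpha)}\,dx\,dt$ — note that the exponent $2-2\alpha$ is exactly the one appearing in \eqref{kudr}. Using the symmetry $t\mapsto-t$ and the evenness of $\tilde\xi$, I would split the $x$–integral according to the signs of $x$ and $x+t$: the two same–sign pieces each reduce, after $x\mapsto-x$ if needed, to the quantity $I(\xi)$ of \eqref{kudr}, while the crossing piece ($t>0$, $-t<x<0$) becomes, after the unit–Jacobian change $(t,x)\mapsto(a,b)=(-x,\,t+x)$, the integral $2\int_0^\infty\int_0^\infty|\xi(b,0)-\xi(a,0)|^2(a+b)^{-(2-2\alpha)}\,da\,db$, which is $\le 2\int_0^\infty\int_0^\infty|\xi(b,0)-\xi(a,0)|^2|a-b|^{-(2-2\alpha)}\,da\,db=4\,I(\xi)$ since $a+b\ge|a-b|$ and $2-2\alpha>0$. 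Lemma \ref{kudr.prop} then yields $[\tilde\xi]^2_{\dot H^s(\R)}\le K\,\nrm{\xi}{\ws{1}{2}{\alpha}{\Pi}}^2$, and chaining the three steps gives $\int_0^\infty|D_{x_1}(x_1,h)|^2\,dx_1\le K\,h^{-(1+2\alpha)}\nrm{\xi}{\ws{1}{2}{\alpha}{\Pi}}^2$ with $K$ independent of $h$. Throughout I would take $\alpha\in[0,1/2[$, the range of Lemma \ref{kudr.prop}, which also ensures $s\in\,]0,1/2]$ and $1+2\alpha>0$.

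The computations are routine once the structure is set up; the two points calling for genuine care are the identification of the boundary integral as a line convolution with the \emph{even} reflection $\tilde\xi$ (it is precisely the Neumann image across $E'$ that makes this work, hence that keeps the Dirichlet trace $\xi(\cdot,0)$ the only ingredient), and the last step, where one must verify that the one–sided quantity $I(\xi)$ dominates the \emph{full} Gagliardo seminorm of $\tilde\xi$ — the only non–obvious contribution being the crossing term, controlled via $a+b\ge|a-b|$. One could instead follow a purely real–variable route in the spirit of \cite{Kr.Book.99}, Thm.~8.20, writing $D_{x_1}(x_1,h)=c_0\int_\R P_h'(t)\bigl(\tilde\xi(x_1-t)-\tilde\xi(x_1)\bigr)\,dt$ (using $\int_\R P_h'=0$), then combining Minkowski's integral inequality with $\int_\R|P_h'(t)|\,|t|^{s}\,dt\le K\,h^{s-1}$; this, however, would require controlling $\nrm{\tilde\xi(\cdot-t)-\tilde\xi}{L^2(\R)}$ pointwise in $t$ (a $B^s_{2,1}$–type modulus of continuity), whereas the Plancherel argument above matches exactly the $H^s$ information supplied by Lemma \ref{kudr.prop}.
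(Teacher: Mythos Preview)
Your argument is correct and takes a genuinely different route from the paper. The paper stays entirely on the real side: it uses the cancellation $\int_0^\infty G(x,y_1)\,dy_1=0$ to insert $\xi(x_1,0)$, then applies Cauchy--Schwarz in $y_1$ with the weight $|y_1-x_1|^{2-2\alpha}$, obtaining
\[
|D_{x_1}(x)|^2 \;\le\; \Bigl(\int_0^\infty G^2\,|y_1-x_1|^{2-2\alpha}\,dy_1\Bigr)\Bigl(\int_0^\infty \frac{|\xi(y_1,0)-\xi(x_1,0)|^2}{|y_1-x_1|^{2-2\alpha}}\,dy_1\Bigr),
\]
integrates in $x_1$ with H\"older $(L^\infty,L^1)$, bounds the second factor by Lemma~\ref{kudr.prop}, and computes the first factor explicitly to extract the $h^{-(1+2\alpha)}$ scaling. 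Your approach instead recognises the operator as a Fourier multiplier after even reflection, which makes the $h$-dependence appear immediately from the substitution $\sigma=h|\zeta|$; the price is the extra reduction step (identifying the half-line integral with a full convolution via the image term) and the need to control the \emph{two-sided} Gagliardo seminorm of $\tilde\xi$ by the one-sided quantity $I(\xi)$, which you handle via $a+b\ge|a-b|$. Both routes rest on Lemma~\ref{kudr.prop} in exactly the same way, since the exponent $2-2\alpha$ in \eqref{kudr} is precisely $1+2s$ for your $s=(1-2\alpha)/2$.

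Two minor remarks. First, your use of Plancherel presumes $\hat{\tilde\xi}$ is a function; for a general $\xi\in W^{1,2}_\alpha(\Pi)$ the trace on $B'$ need not lie in $L^2(\R_+)$, so you should say a word (either invoke the Corollary giving $\xi\in L^2(B')$ when $\xi$ vanishes on $B$, or note that finiteness of the Gagliardo integral forces $|\zeta|^s\hat{\tilde\xi}\in L^2$ and that the multiplier $\zeta^2e^{-2h|\zeta|}$ vanishes at $\zeta=0$, killing any singular part there). Second, the ``real-variable alternative'' you sketch at the end, via Minkowski, is \emph{not} what the paper does: the paper's Cauchy--Schwarz with weight is precisely what matches the $H^s$ (i.e.\ $B^s_{2,2}$) information of Lemma~\ref{kudr.prop}, whereas your Minkowski variant would, as you correctly observe, demand a $B^s_{2,1}$ modulus.
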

\begin{proof}
We recall that
$$
G:= - x_2 \left( \frac{x_1 - y_1}{ (( x_1 - y_1 )^2 + x_2^2 )^2} +  \frac{x_1 + y_1}{ (( x_1 + y_1 )^2 + x_2^2 )^2} \right).
$$
Because $\int_0^\infty G(x,y_1) dy_1 =0$ for every $x\in \Pi'$  we have
$$
D_{x_1} (x)= \int_0^\infty G(x,y_1) \left( \xi(y_1,0)-\xi(x_1,0) \right) dy_1,\quad \forall x \in \Pi',
$$
we underline that $G(x,\cdot)$ is evaluated at  $x\in \Pi'$ while $\xi(x_1,0)$ is taken on $B'$.
By  H{\"o}lder estimates in $y_1$ with $p=2,p'=2$, we have~:
\begin{equation}\label{HolderY}
| D_{x_1} |^2 \leq \int_0^\infty G^2 |y_1-x_1|^{2-{2\alpha} } dy_1 \int_0^\infty \frac{| \xi(y_1,0)-\xi(x_1,0)|^2}{|y_1-x_1|^{2-{2\alpha} }} dy_1
\end{equation}
integrating in $x_1$ and using H{\"o}lder estimates with $p=\infty,p'=1$ then
$$
I_3:= \int_0^\infty | D_{x_1} |^2 dx_1 \leq \sup_{x_1\in \rr} \int_{\rr} G^2 |y_1 -x_1 |^{2-{2\alpha}}\, dy_1 \int_0^\infty \int_0^\infty \frac{| \xi(y_1,0)-\xi(x_1,0)|^2}{|y_1-x_1|^{2-{2\alpha} }} dy_1 dx_1
$$
Thanks to proposition \ref{kudr.prop} we estimate the last integral in the rhs above
$$
I_3 \leq K \sup_{x_1\in \rr} I_4(x_1,x_2) \nrm{\xi}{\ws{1}{2}{\alpha}{\Pi}},\text{ where } I_4(x_1,x_2) :=  \int_{\rr} G^2 |y_1 -x_1 |^{2-{2\alpha}}\, dy_1
$$
Considering $I_4$ one has
$$
\begin{aligned}
I_4(x)& \leq  K \int_0^\infty \frac{x_2^2 ( y_1 -x_1)^{4-{2\alpha}}}{((x_1-y_1)^2+x_2^2)^4} dy_1 + \frac{x_2^2 ( y_1 +x_1)^2 (y_1-x_1)^{2-{2\alpha}}}{((x_1+y_1)^2+x_2^2)^4} dy_1 \\
& \leq K \int_0^\infty \frac{x_2^2 ( y_1 -x_1)^{4-{2\alpha}}}{((x_1-y_1)^2+x_2^2)^4} dy_1 + \frac{x_2^2 ( y_1 +x_1)^{4-{2\alpha}}}{((x_1+y_1)^2+x_2^2)^4} dy_1 =:I_5+I_6
\end{aligned}
$$
Both terms in the last rhs are treated the same, namely
$$
I_5 \leq x_2^{2+5-{2\alpha} - 8} \int_{-\infty}^\infty \frac{z^{4-{2\alpha}}}{(z^2+1)^4} dz \leq \frac{K}{x_2^{1+{2\alpha}}}
$$
which ends the proof.
\end{proof}
\begin{rmk} This is one of the key point estimates 
of the paper. One could think of using weighted properties of $\xi$ of lemma \ref{BordsBpPoid}
instead of the fractional Sobolev norm introduced from proposition \ref{kudr.prop},
in the H{\"o}lder estimates \eqref{HolderY}. This implies to transfer the $x_1$-integral
on $G$, then it seems impossible to conclude because
$$
\int_0^\infty \int_0^\infty G^2 \omega_\alpha(y_1)  dy_1 d x_1 = \infty,
$$
which is easy to show if one performs the  change of variables $z_1=y_1-x_1,y_1=y_1$.
\end{rmk}

In this part we study the convergence properties of the normal derivative of $\xi$
on vertical interfaces far from $E$. For this sake we call
$$
\Pi_l=\{ y \in \Pi,\text{ s.t. } y_1>l \},\quad E_l=\{ y_1=l, \, y_2 \in [f(0),+\infty[ \}, \quad B_l = \{ y\in B, y_1 > l\},
$$
here we redefine the weighted trace spaces of Sobolev type
$$
\begin{aligned}
\ws{\ud}{2}{0,\sigma}{\partial \Pi_l}= & \left\{ u \in \cD'(\partial \Pi_l) \text{ s.t. } \frac{u}{(1+\sigma^2)^{\frac{1}{4}}} \in L^2(\partial \Pi_l),\, \int_{\partial \Pi_l^2} \frac{| u(y)-y(y')|^2}{|y-y'|^2} ds(y) ds(y') < + \infty \right\} 
\end{aligned}
$$
where we define $\sigma:= |y-(0,f(0))|=\sqrt{y_1^2 + (y_2-f(0))^2}$. Note that the weight is  a distance
to the fixed point $(0,f(0))$ independent on $l$.
\begin{prop}\label{lift}
Suppose that $v\in \ws{\ud}{2}{0,\sigma}{\partial \Pi_l}$ with $v=0$ on $B_l$ then there
exists an extension  denoted $R(v)\in \cD'(\Pi_l)$ s.t.
$$
\snrm{\nabla R(v)}{L^2(\Pi_l)} \leq K \nrm{v}{\ws{\ud}{2}{0,\sigma}{\partial \Pi_l}},
$$
where $K$ depends only on $\nrm{f'}{\infty}$.
\end{prop}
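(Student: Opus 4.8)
The plan is to reduce, by a bi-Lipschitz change of variables and an odd reflection, to a trace lifting in a half-plane, where the Poisson extension supplies the required energy bound, all of it uniformly in $l$.

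First I would flatten the rough bottom. Let $\psi$ be the $2\pi$-periodic Lipschitz function whose graph is $B$ (so $\psi\equiv f$ on $[0,2\pi[$), and set $\Phi(y)=(y_1,\,y_2-\psi(y_1))$. This shear is volume preserving and bi-Lipschitz with constants controlled by $\nrm{f'}{\infty}$; it carries $\Pi_l$ onto the genuine quarter-plane $Q_l=\{y_1>l,\,y_2>0\}$, $E_l$ onto $\{y_1=l\}\times]0,\infty[$, $B_l$ onto $]l,\infty[\times\{0\}$, and the point $(0,f(0))$ onto the origin. Consequently $\sigma\circ\Phi^{-1}$ is comparable to $\sqrt{y_1^2+y_2^2}$, and both the Dirichlet energy and the weighted trace norm are preserved up to constants depending only on $\nrm{f'}{\infty}$. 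It therefore suffices to lift the transported datum — still written $v$ — on $Q_l$, knowing that $v=0$ on $]l,\infty[\times\{0\}$.

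Next I would reflect $Q_l$ across $\{y_2=0\}$ onto the half-plane $\{y_1>l\}$, writing $w(t):=v(l,t)$, $t>0$, for the trace of $v$ on the vertical side and $\bar w$ for its odd extension to the whole line $\{y_1=l\}$. The key step is the bound
$$
\int_{\rr}\!\int_{\rr}\frac{|\bar w(t)-\bar w(s)|^2}{|t-s|^2}\,dt\,ds\ \le\ K\,\nrm{v}{\ws{\ud}{2}{0,\sigma}{\partial\Pi_l}}^2,\qquad K=K(\nrm{f'}{\infty}),
$$
uniformly in $l$. Splitting the square according to the signs of $t,s$, the equal–sign pieces reproduce the Gagliardo double integral of $v$ restricted to $E_l\times E_l$; in the mixed piece one writes $s=-\tau$ and uses $(t+\tau)^2\ge(t-\tau)^2$ for $t,\tau>0$ to dominate it by the same Gagliardo integral plus a Hardy term $\int_0^\infty|w(\tau)|^2\,\tau^{-1}\,d\tau$. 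This last term is exactly where the hypothesis $v\equiv0$ on $B_l$ is used: since $B_l$ is Lipschitz and issues from the corner, $\int_{B_l}|y-y'|^{-2}\,ds(y')\ge c(\nrm{f'}{\infty})/\mathrm{dist}(y,(l,0))$, so that $\int_0^\infty|w(\tau)|^2\tau^{-1}\,d\tau$ is dominated by the cross term $\int_{E_l}\int_{B_l}|v(y)|^2|y-y'|^{-2}\,ds(y')\,ds(y)$, which is part of $\nrm{v}{\ws{\ud}{2}{0,\sigma}{\partial\Pi_l}}^2$; the weighted $L^2$ component of that norm makes all the integrals converge. Every estimate here is invariant under translation in the $y_1$-direction, which is what keeps $K$ independent of $l$.

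Finally, let $U$ be the harmonic (Poisson) extension of $\bar w$ to $\{y_1>l\}$: it is odd in $y_2$, hence vanishes on $]l,\infty[\times\{0\}$, and its trace on the vertical side is $w$. The classical half-plane identity gives $\snrm{\nabla U}{L^2(\{y_1>l\})}^2 = c\int\!\int|t-s|^{-2}|\bar w(t)-\bar w(s)|^2\,dt\,ds$, and since $|\nabla U|^2$ is even in $y_2$ the restriction $R(v):=U|_{Q_l}$ carries half that energy; transporting $R(v)$ back by $\Phi^{-1}$ yields the asserted extension on $\Pi_l$ with $K$ depending only on $\nrm{f'}{\infty}$. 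I expect the main obstacle to be precisely the reflection estimate above — showing the vertical trace lies in $H^{\ud}_{00}$ near the corner with a constant uniform in $l$, which forces one to exploit the cross term of the Gagliardo seminorm together with $v|_{B_l}=0$ — and, secondarily, the bookkeeping ensuring that flattening, reflection and the Poisson identity all contribute only $\nrm{f'}{\infty}$-dependent, $l$-independent constants.
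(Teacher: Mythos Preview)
Your argument is correct and follows the same skeleton as the paper's proof: flatten the rough bottom by the shear $(y_1,y_2)\mapsto(y_1,y_2-f(y_1))$, use the vanishing of $v$ on $B_l$ to control the $H^{1/2}_{00}$-type behaviour at the corner, perform an odd reflection across the flattened bottom, and transport back. The dependence of all constants on $\nrm{f'}{\infty}$ only and the uniformity in $l$ are handled the same way in both.

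Where you differ is in the actual extension operator. The paper invokes the explicit Hanouzet construction: a mollified trace $V(Y)=\int_{|s|<1}\mathcal K(s)\hat v(Y_1 s+Y_2)\,ds$ multiplied by a weighted cut-off $\Psi(Y)=\Phi\big((Y_1-l)/(1+Y_2^2+l^2)^{1/2}\big)$, then antisymmetrised in $Y_2$; this yields simultaneously the gradient bound and a weighted $L^2$ bound on the extension itself (only the former is needed for the statement). You instead take the Poisson extension of the odd continuation $\bar w$ to the half-plane $\{y_1>l\}$ and use the classical identity $\nrm{\nabla U}{L^2}^2=c\,|\bar w|_{\dot H^{1/2}}^2$. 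Your route is more self-contained, avoiding the external reference to Hanouzet, and gives the minimal-energy lift; the paper's route is more constructive and, through the cut-off $\Psi$, produces a compactly supported lift near $E_l$ together with the additional weighted $L^2$ control. For the corner condition, the paper cites Grisvard's theorem 1.5.2.3 to get $\int_0^\delta|\hat v|^2\,dY_2/Y_2<\infty$, whereas you recover the same Hardy integral directly from the $E_l\times B_l$ cross term of the Gagliardo seminorm; the two are equivalent here.
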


\begin{proof}
We lift the domain in a first step in order to transform $\Pi_l$ in a quarter-plane $\hat{\Pi}_l$.
$$
y=\varphi(Y):=\begin{pmatrix} Y_1 \\ Y_2 + f(Y_1) \end{pmatrix}, \quad Y \in \hat{\Pi}_l := (\R_+)^2,
$$
if we set $\hat{v}(Y_2)=v(Y_2+f(0))=v(y_2)$ then the $L^2$ part of the weighted norm
above reads
$$
\begin{aligned}
\int_{E_l} \frac{v^2}{(1+\sigma^2)^\ud} dy_2 & = \int_{E_l}  \frac{v^2}{(1+l^2+(y_2-f(0))^2)^\ud} dy_2 
= \int_{\{l\}\times\R_+}  \frac{\hat{v}^2}{(1+l^2+Y_2^2)^\ud} dY_2 \\
& = \int_{\{l\}\times\R_+}  \frac{\hat{v}^2}{(1+\hat{\sigma}^2)^\ud} dY_2 ,
\end{aligned}
$$
where $\hat{\sigma}^2 = Y_1^2+Y_2^2$. If $v\in \ws{\ud}{2}{0,\sigma}{\partial \Pi_l}$ and $v=0$ on $B_l$ 
we know that (\cite{Grisvard}, p.43 theorem 1.5.2.3) 
$$
\int_0^\delta |\hat{v}(Y_2)|^2 \frac{d Y_2}{Y_2} < + \infty,
$$
which authorizes us to extend $v$ by zero on $\hat{E}_l:=\{Y_1=l\} \times \R$, this extension
still belongs to $\ws{\ud}{2}{0,\hat{\sigma}}{\hat{E}_l}$. 
Arguments above allow obviously to write for every $v$ vanishing 
on $B$ and $\hat{v}$ defined above
$$
\nrm{v}{\ws{\ud}{2}{0,\sigma}{\partial \Pi_l}} = \nrm{\hat{v}}{\ws{\ud}{2}{0,\hat{\sigma}}{\hat{E}_l}}.
$$
Here we use trace theorems II.1 and II.2 of {\sc Hanouzet} \cite{Ha.71}, they follow exactly the same in our case
except that the weight is not a distance to a point on the boundary (as in  \cite{Ha.71})
but it is a distance to a point exterior to the domain. So in order to define an extension (\cite{Ha.71} p. 249), we set
$$
\left\{ 
\begin{aligned}
V(Y)& = \int_{|s|<1} {\cal K}(s) \hat{v}(Y_1 s + Y_2 ) ds ,\quad s \in \R, \quad \forall Y \in \hat{\Pi}_l, \\
\Psi (Y)& = \Phi \left( \frac{Y_1-l}{(1+Y_2^2 + l^2 )^\ud} \right), 
\end{aligned}
\right.
$$
where $\Phi$ is a cut-off function such that
$$
{\rm Supp} \Phi \in \left[0:\uq\right[,\quad \Phi(0)=1,\quad \Phi \in C^\infty\left(\left[0,\uq\right[\right),
$$
and ${\cal K}$ is a regularizing kernel i.e. ${\cal K} \in C^\infty_0(]-1:1[)$
and $\int_\R {\cal K}(s) ds=1$.
Then the extension in the quarter-plane domain reads
$$
w(Y)= (\Psi V)(Y_1,Y_2) - (\Psi V)(Y_1,-Y_2) , \quad Y \in \hat{\Pi}_l,
$$
which allows to have $w(Y_1,0)=0$ for all $Y_1\in \R_+$. According to theorems II.1 and II.2
in \cite{Ha.71}, one then gets
$$
 \nrm{\frac{w}{(1+\hat{\sigma}^2)^\ud}}{L^2(\hat{\Pi}_l)} \leq K \nrm{ \hat{v}}{\ws{\ud}{2}{0,\sigma}{\hat{E}_l}},\quad  \nrm{\nabla w}{L^2(\hat{\Pi}_l)} \leq K \nrm{ \hat{v}}{\ws{\ud}{2}{0,\sigma}{\hat{E}_l}}.
$$
Turning back to our starting domain $\Pi_l$, we set 
$$
R(v)= w(\varphi^{-1}(y))= w(y_1,y_2-f(y_1)),\quad \forall y \in \Pi_l.
$$
We focus on the properties of the gradient
$$
\int_{\Pi_l} (A \nabla_y R(v), \nabla_y R(v)) dy = \int_{\hat{\Pi}_l} |\nabla_Y w|^2 d Y, \text{ where } A= \begin{pmatrix} 1 & f' \\
f' & 1+(f')^2 
\end{pmatrix},
$$
but the eigenvalues of $A$ are
$$
\lambda_\pm = \frac{ 2 + (f')^2 \pm \sqrt{ 2 + (f')^2} |f'| }{2},
$$
the lowest eigenvalue is positive and tends to zero as $|f'|$
increases. The boundary is Lipschitz so that $\nrm{f'}{\infty}$ is
bounded. Thus there exists a minimum value of $\lambda_-$. All this guarantees
the existence of a constant $\delta'(\nrm{f'}{\infty})>0$ such that
$$
\delta' \int_{\Pi_l} \left|\nabla_y R(v)\right|^2 dy \leq K \nrm{\hat{v}}{\ws{\ud}{2}{0,\hat{\sigma}}{\hat{E}_l}},
$$
which ends the proof.
\end{proof}

Thanks to the existence of a  lift $R(v)$, we are able to 
estimate a sort of weak weighted Sobolev norm for the normal
derivative on vertical interfaces located at $y_1=L/ \epsilon$. 
\begin{prop}\label{esitm.weight.sob.trace}
If $v \in \ws{\ud}{2}{0,\sigma}{\partial \Pi_{\frac{L}{\epsilon}}}$ and $v=0$ on $B_{\frac{L}{\epsilon}}$ then
one has
$$
\int_{E_{\frac{L}{\epsilon}}} \dd{\xi}{\bf n}\left(\frac{L}{\epsilon},y_2\right) v(y_2) dy_2 \leq K \epsilon^{\alpha} \nrm{ v }{\ws{\ud}{2}{0,\sigma}{\partial \Pi_{\frac{L}{\epsilon}}}}
$$
\end{prop}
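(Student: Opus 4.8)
The plan is to combine the decay estimate for $\partial\xi/\partial y_1$ from Theorem~\ref{DecayXi} with the weighted lifting $R(v)$ constructed in Proposition~\ref{lift}, using the weak formulation of \eqref{tau} to pass from a boundary integral to an interior integral where the decay is quantitative. Concretely, set $l=L/\epsilon$ and write the left-hand side as $\int_{E_l}\partial_{\bf n}\xi\,v\,dy_2$. Since $\xi$ is harmonic in $\Pi_l$, vanishes on $B_l$, and $R(v)$ is an extension of $v$ vanishing on $B_l$, I would integrate by parts (Green's formula on $\Pi_l$), picking up only the boundary term on $E_l$ because the boundary terms on $B_l$ vanish and because the decay of $\xi$ and of $R(v)$ kills any contribution at infinity. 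This yields the identity
$$
\int_{E_l}\dd{\xi}{\bf n}\Big(l,y_2\Big)v(y_2)\,dy_2 = \int_{\Pi_l}\nabla\xi\cdot\nabla R(v)\,dy.
$$

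Next I would estimate the right-hand side by Cauchy--Schwarz, but \emph{with the weight inserted}: write $\nabla\xi\cdot\nabla R(v) = \big(\rho^{\alpha}\nabla\xi\big)\cdot\big(\rho^{-\alpha}\nabla R(v)\big)$ (or, more naturally here, use the distance $\sigma$ to $(0,f(0))$ that matches the trace space of Proposition~\ref{lift}, since on $\Pi_l$ the two weights are comparable up to constants). Then
$$
\int_{\Pi_l}\nabla\xi\cdot\nabla R(v)\,dy \leq \Big(\int_{\Pi_l}\sigma^{2\alpha}|\nabla\xi|^2\Big)^{1/2}\Big(\int_{\Pi_l}\sigma^{-2\alpha}|\nabla R(v)|^2\Big)^{1/2}.
$$
For the second factor, $\sigma \geq l = L/\epsilon$ on $\Pi_l$, so $\sigma^{-2\alpha}\leq (L/\epsilon)^{-2\alpha}$ when $\alpha>0$; combined with the gradient bound $\snrm{\nabla R(v)}{L^2(\Pi_l)}\leq K\nrm{v}{\ws{\ud}{2}{0,\sigma}{\partial\Pi_l}}$ from Proposition~\ref{lift}, this factor is bounded by $K\epsilon^{\alpha}\nrm{v}{\ws{\ud}{2}{0,\sigma}{\partial\Pi_l}}$. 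For the first factor I would use the horizontal-derivative decay of Theorem~\ref{DecayXi}, $\int_0^\infty|\partial_{y_1}\xi(y_1,y_2)|^2\,dy_1\leq K y_2^{-1-2\alpha}$ (and the analogous bound for $\partial_{y_2}\xi$, or $\snrm{\xi}{\ws{1}{2}{\alpha}{\Pi}}<\infty$ directly), together with $\xi\in\wso{1}{2}{\alpha}{\Pi}$, to see that $\int_{\Pi_l}\sigma^{2\alpha}|\nabla\xi|^2\,dy$ is finite and in fact bounded by a constant independent of $l$ (it is dominated by the full weighted semi-norm $\snrm{\xi}{\ws{1}{2}{\alpha}{\Pi}}^2$). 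Multiplying the two factors gives the claimed bound.

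The main obstacle is the justification of the integration by parts on the unbounded domain $\Pi_l$ and the precise matching of weights: one must check that $\xi$ and $R(v)$ decay fast enough at infinity for Green's formula to hold with no contribution at infinity, and that the weight $\sigma$ attached to $\partial\Pi_l$ in Proposition~\ref{lift} is indeed comparable on $\Pi_l$ to $\rho$ used in Theorem~\ref{DecayXi} (both being distances to points at bounded distance from $(0,f(0))$, this is routine for $y_1\geq l\geq 1$). A secondary technical point is that the sign of $\alpha$ matters: one needs $\alpha\in\,]0,\alpha_0[$ for the factor $\epsilon^\alpha$ to be a genuine decay, and one should remark that by the density of smooth compactly supported functions and the estimates of Lemma~\ref{trace} the inequality extends to all admissible $v$. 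Once these points are settled, the estimate follows by assembling the two Cauchy--Schwarz factors as above.
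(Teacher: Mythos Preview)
Your proposal is correct and follows essentially the same route as the paper: lift $v$ via Proposition~\ref{lift}, use the harmonicity of $\xi$ on $\Pi_l$ to rewrite the boundary integral as $\int_{\Pi_l}\nabla\xi\cdot\nabla R(v)\,dy$, then insert the weight $\rho^{\pm\alpha}$ and exploit $\rho\gtrsim L/\epsilon$ on $\Pi_l$ together with $\xi\in\wso{1}{2}{\alpha}{\Pi}$ and the $L^2$ gradient bound for $R(v)$. The only cosmetic difference is that the paper pulls the factor $\sup_{\Pi_l}\rho^{-2\alpha}\leq K\epsilon^{2\alpha}$ out of the $\nabla\xi$ integral (bounding $\|\nabla\xi\|_{L^2(\Pi_l)}$ by $\epsilon^{\alpha}\snrm{\xi}{\ws{1}{2}{\alpha}{\Pi}}$) rather than out of the $\nabla R(v)$ integral as you do; the two are equivalent.
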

\begin{proof}
The function $v$ given in the hypotheses belongs to the adequate spaces in order to
apply proposition \ref{lift}, thus there exists a lift $R(v) \in \ws{1}{2}{0}{\pile}$ s.t. $R(v)=v$
on $\partial \Pi_{\frac{L}{\epsilon}}$. Because $\xi$ is harmonic and belongs to $\ws{1}{2}{0}{\Pi}$,
for any $\varphi\in \D(\overline{\pile})$ and $\varphi_{|B_{\frac{L}{\epsilon}}}=0$~,
one writes the variational form:
$$
\int_{\pile} \nabla \xi \cdot \nabla \varphi dy = \int_{\partial \pile} \ddn{\xi } \varphi d\sigma(y) = \int_{E_{\frac{L}{\epsilon}}}\ddn{\xi } \varphi dy_2
$$
Then by density and continuity arguments one extends this formula
to every test functions in $\varphi\in \ws{1}{2}{0,\sigma}{\pile}$ such that
$\varphi=0$ on $B_{\frac{L}{\epsilon}}$. As the specific lift $R(v)$ belongs to this
space one has
$$
\begin{aligned}
\int_{E_{\frac{L}{\epsilon}}} \dd{\xi}{\bf n} v dy_2& 
= \int_{\Pi_{\frac{L}{\epsilon}}} \nabla \xi \nabla R(v) dy  
\leq \left( \sup_{\Pi_{\frac{L}{\epsilon}}} \frac{1}{\rho^{2\alpha}} \int_{\Pi_l} | \nabla \xi|^2 \rho^{2\alpha} dy \right)^\ud 
\nrm{\nabla R(v)}{L^2(\Pi_{\frac{L}{\epsilon}})} \\
& \leq K \epsilon^{\alpha} \nrm{\xi}{\ws{1}{2}{\alpha}{\Pi_{\frac{L}{\epsilon}}}}
    \nrm{\nabla R(v)}{L^2(\Pi_{\frac{L}{\epsilon}})}\leq K'  \epsilon^{\alpha} \nrm{v}{\ws{\ud}{2}{0,\sigma}{\partial \Pi_{\frac{L}{\epsilon}}}}
\end{aligned}
$$
which ends the proof.
\end{proof}

\subsection{ Test functions: from macro to micro and vice-versa} \label{test}

We suppose that $v\in H^1_D(\Oe):=\{ u \in H^1(\Oe), u=0 \text{ on } \Geps \cup \Gun\}$ then 
$\gamma(v)\in H^{\ud}(\partial \Oe)$ which implies that $v\in H^\ud(\gio)$ and that for any corner
$$
\int_0^\delta |v(x(t))|^2 \frac{d t}{t} < \infty ,
$$
where $x(t)\in \gio $ is a mapping of the neighborhood of the corners.
To the trace of $v$ on $\Gin$ or $\Gout$, we associate a trace of a function defined on $\partial \Pi_{\frac{L}{\epsilon}}$
which is zero on $B_{\frac{L}{\epsilon}}$ s.t.
$$
\ti{v}\left( \frac{L}{\epsilon} , y_2\right):= v(0,\epsilon y_2) = v(0,x_2) ,\quad \forall x_2 \in [\epsilon f(0),1] \quad \text{ and } \ti{v}\left( \frac{L}{\epsilon} , y_2\right):= 0, \quad y_2 > \ue,
$$
then one has the following connexion between the macroscopic trace norm and the microscopic weighted one.
\begin{prop}\label{test.prop}
Under the hypotheses above on functions $v$ and $\ti{v}$,
$$
\nrm{v}{H^\ud(\Gin\cup\Geps\cup\Gun)} \sim \nrm{\ti{v}}{\ws{\ud}{2}{0,\sigma}{\partial \Pi_{\frac{L}{\epsilon}}}},
$$
if $v=0$ on $\Geps\cup\Gun$ (resp. $\ti{v}=0$ on $B_{\frac{L}{\epsilon}}$).
\end{prop}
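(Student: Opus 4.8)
The plan is to prove the equivalence of the two norms by tracking carefully what the rescaling $x_2 = \epsilon y_2$ does to the $L^2$-part of the norm and to the Gagliardo seminorm, keeping in mind that the microscopic weight $(1+\sigma^2)^{1/2}$ is a distance to the fixed point $(0,f(0))$ while the macroscopic $H^{1/2}$-norm on the segment $\gio$ carries no weight. First I would observe that on $\partial\Pi_{L/\epsilon}$ the distance $\sigma = \sqrt{(L/\epsilon)^2 + (y_2 - f(0))^2}$ is, for the relevant range $y_2 \in [f(0), 1/\epsilon]$, comparable to $L/\epsilon$ up to a fixed multiplicative constant (since $|y_2 - f(0)| \le 1/\epsilon + |f(0)| \le C/\epsilon$), so that $(1+\sigma^2)^{1/2} \sim 1/\epsilon$ uniformly on the support of $\tilde v$. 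Hence the weighted $L^2$-part becomes, after the change of variable $x_2 = \epsilon y_2$,
\[
\int_{E_{L/\epsilon}} \frac{\tilde v^2}{(1+\sigma^2)^{1/2}}\, dy_2 \sim \epsilon \int_{\{L/\epsilon\}\times\R_+} \tilde v^2\, dy_2 = \int_{\epsilon f(0)}^{1} v(0,x_2)^2\, dx_2 = \nrm{v}{L^2(\Gin)}^2,
\]
using that $\tilde v = 0$ for $y_2 > 1/\epsilon$. This matches the $L^2$-part of $\nrm{v}{H^{1/2}(\Gin)}$.

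Next I would handle the Gagliardo double integral. The change of variables $y_2 = x_2/\epsilon$, $y_2' = x_2'/\epsilon$ gives $|y_2 - y_2'|^2 = \epsilon^{-2}|x_2 - x_2'|^2$ and $dy_2\, dy_2' = \epsilon^{-2} dx_2\, dx_2'$, so the seminorm on $E_{L/\epsilon}$ is scale-invariant and reproduces exactly the Gagliardo seminorm of $v$ on $\Gin$. The only subtlety is the contribution of the flat part $y_2 > 1/\epsilon$ where $\tilde v \equiv 0$, which is attached to the end of $\Gin$: the extra terms are controlled by $\int_{\Gin} |v(0,x_2)|^2 (\mathrm{dist}(x_2, \text{top}))^{-1}\, dx_2$, which is finite precisely because $v \in H^1_D(\Oe)$ vanishes on $\Gun$ and thus, by the corner estimate quoted at the start of \S\ref{test} (\cite{Grisvard}, Thm.~1.5.2.3), satisfies the Hardy-type bound near the corner $(0,1)$. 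Similarly, the requirement $\tilde v = 0$ on $B_{L/\epsilon}$ corresponds to $v = 0$ on $\Geps$, and near the corner $(0,\epsilon f(0))$ the same Hardy bound guarantees the matching between the integrals involving $B_{L/\epsilon}$ and $\Geps$; this is where one uses that $v$ vanishes on $\Geps\cup\Gun$. Assembling the $L^2$-part and the two pieces of the seminorm (the one on $E_{L/\epsilon}$ and the cross/diagonal terms with $B_{L/\epsilon}$), with constants depending only on $L$ and $\nrm{f'}{\infty}$, yields the two-sided bound $\nrm{v}{H^{1/2}(\Gin\cup\Geps\cup\Gun)} \sim \nrm{\tilde v}{\ws{\ud}{2}{0,\sigma}{\partial\Pi_{L/\epsilon}}}$.

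The main obstacle I expect is precisely the bookkeeping at the two corners — making rigorous that extending $\tilde v$ by zero beyond $y_2 = 1/\epsilon$ (and the vanishing on $B_{L/\epsilon}$) costs only a bounded multiple of the norm, uniformly in $\epsilon$. This is not a formal computation: it requires the Hardy inequality near each corner together with the observation that on $\partial\Pi_{L/\epsilon}$ the weight $(1+\sigma^2)^{1/2}$ is, although globally $\sim L/\epsilon$, still the correct weight making the extension-by-zero continuous in the weighted trace space (this is exactly the point already used in Proposition \ref{lift}, where the weight is a distance to an exterior point rather than to a boundary point, and where the relevant trace theorems of \cite{Ha.71} are invoked). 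Everything else — the two changes of variables and the identification of the scale-invariant seminorm — is routine.
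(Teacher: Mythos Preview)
Your approach is correct and is essentially the paper's: use $(1+\sigma^2)^{1/2}\sim L/\epsilon$ on the support of $\tilde v$ for the $L^2$-part, and the scale-invariance of the Gagliardo seminorm under $x_2=\epsilon y_2$ for the rest. You are in fact more careful than the paper, which simply records the seminorm identity on $\Gamma_{\mathrm{in}}\leftrightarrow E_{L/\epsilon}$ by homogeneity and does not spell out the corner/Hardy bookkeeping for the cross terms with $B_{L/\epsilon}$ or the extension by zero past $y_2=1/\epsilon$.
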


\begin{proof}
Thanks to the change of variables $x_2=\epsilon y_2$ we have that
$$
\begin{aligned}
\int_{\epsilon f(0)}^1 v^2(0,x_2) dx_2 &
= \epsilon \int_{f(0)}^{\ue}\ti{v}^2 \left(\frac{L}{\epsilon},y_2\right) dy_2 
\leq K \epsilon  \sup_{ E_{\frac{L}{\epsilon}}} (1+\sigma^2)^\ud \int_{E_{\frac{L}{\epsilon}}} \frac{\ti{v}^2}{ (1+\sigma^2)^\ud} dy_2,  \\
& \leq K \epsilon {\frac{L}{\epsilon}} \nrm{\ti{v}}{\ws{\ud}{2}{0,\sigma}{\partial\Pi_{\frac{L}{\epsilon}}}}^2.
\end{aligned}
$$
Conversely
$$
\begin{aligned}
\int_{E_{\frac{L}{\epsilon}}} \frac{\ti{v}^2}{ (1+\sigma^2)^\ud} dy_2 & = \int_{f(0)}^{\ue}  \frac{\ti{v}^2}{ (1+\sigma^2)^\ud} dy_2 \leq\sup_{ E_{\frac{L}{\epsilon}}} \frac{1}{(1+\sigma^2)^\ud  } \int_{f(0)}^{\ue}  \ti{v}^2 dy_2  \\
& \leq K \epsilon \nrm{\ti{v}}{L^2(f(0),\ue)}^2 = K \nrm{ v}{L^2(\Gin)}^2.
\end{aligned}
$$
For the semi-norm the same change of variable provides an equality 
due to the homogeneity in $\epsilon$ i.e.
$$
\snrm{v}{H^\ud(\Gin)}^2
= \int\int_{\Gout^2}  \frac{|v(x_2)- v(x_2)|^2}{|x_2-x_2'|^2} dx_2 dx_2' 
= \snrm{\ti{v}}{\ws{\ud}{2}{0,\sigma}{E_{\frac{L}{\epsilon}}}}^2
$$
\end{proof}

\begin{rmk}
We insist on the fact that one can associate traces of $v$ 
either from $\Gin$ or $\Gout$ to $\ti{v}$, the weight 
that one gains in the microscopic norm comes from the scaling from macro to 
micro and not from the vertical position of the macroscopic interface wrt the origin
of the domain $\Oe$.
\end{rmk}

\section{A new proof of convergence for standard averaged wall laws}\label{Section.Convergence}
\subsection{The full first order boundary layer approximation: error estimates}\label{FoblErrEst}
The periodic boundary layer approximations given in \eqref{fblcp} introduce
some microscopic oscillations on the inlet and outlet boundaries $\gio$.
We define a new full boundary layer approximation
\begin{equation}
\label{fblc}
\uiue = u^1 + \epsilon \dd{u^1 }{x_2}(x_1,0)\left( \beta  - \obeta  - \tin - \tout \right) \lrxe 
\end{equation}
where we define
$$
\tin\lrxe =\xi\lrxe,\quad 
\tout\lrxe =\tilde{\xi} \left(\frac{x_1-L}{\epsilon},\frac{ x_2 }{\epsilon} \right),
$$
and $\xi$ is  the solution of problem \eqref{tau}, and $\tilde{\xi}$ solves
the symmetric problem for $\Gout$:
$$
\left\{ 
\begin{aligned} 
& - \Delta \tilde{\xi} = 0,\quad \text{ in }\Pi_-:= \cup_{k=1}^\infty \{ \zup - 2\pi k e_1 \} \\
& \ddn{\tilde{\xi}}=  \ddn{\beta},\text{ on } E\\
& \tilde{\xi} = 0,\text{ on } B_-:= \cup_{k=1}^\infty \{ P^0- 2\pi k e_1 \}   
\end{aligned}  
\right. 
$$
Every result shown for $\xi$ in sections above holds equally
for $\tilde{\xi}$. 
One easily checks that 
$$
\left.\dd{\tin}{\bf n} \right|_{\Gout} = \ue \dd{\xi}{\bf n} \left(\frac{L}{\epsilon},\frac{x_2}{\epsilon}\right)
\text{ and }
\ddn{\tout}|_{\Gin} = \ue \ddn{\tilde{\xi}}\left(-\frac{L}{\epsilon},\frac{x_2}{\epsilon}\right)
$$
We estimate the error of this new boundary layer approximation. We denote
$\rui:= u^\epsilon - \uiue$, it solves 
\begin{equation}\label{sys.eq.error}
\left\{ 
\begin{aligned}
&- \Delta \rui = C \chiu{\oo}, \text{ on } \Oe,\\
& \dd{r_\epsilon}{\bf n}^{1,\infty} =   \dd{\xi}{\bf n}\left(\frac{L}{\epsilon},\frac{x_2}{\epsilon}\right) \text{ on } \Gout,  \dd{r_\epsilon}{\bf n}^{1,\infty} =    \dd{\tilde{\xi} }{\bf n}\left(\frac{L}{\epsilon},\frac{x_2}{\epsilon}\right) \text{ on } \Gin, \\
& \rui = \epsilon  \dd{u^1 }{x_2}(x_1,0)\left( \beta - \obeta - \tin - \tout \right) \left(\frac{x_1}{\epsilon},\frac{1}{\epsilon} \right)=:b\left(\frac{x_1}{\epsilon},\frac{1}{\epsilon} \right) \text{ on } \Gun,
 \rui = 0 , \text{ on } \Geps
\end{aligned}
\right.
\end{equation}
As $\uiue$ is only a first order approximation, a second order error remains in $\oo$.
This explains the constant source term  on the rhs of the first equation in the 
system above.
We then have
\begin{thm}\label{5.1}
\uhthms, $\rui$ satisfies
$$
\nrm{\rui}{H^1(\Oe)} \leq \epsilon 
$$
\end{thm}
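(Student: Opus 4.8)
The plan is to obtain the $H^1(\Oe)$ estimate by a standard energy argument for the elliptic system \eqref{sys.eq.error}, where the three sources of error — the interior source $C\chiu{\oo}$, the Neumann data on $\gio$, and the Dirichlet data on $\Gun$ — must each be shown to contribute at worst $O(\epsilon)$. First I would split $\rui = z + R$, where $R$ is a lift of the boundary datum $b(\cdot/\epsilon,1/\epsilon)$ from $\Gun$ into $\Oe$, so that $z$ satisfies a problem with homogeneous Dirichlet condition on $\Geps\cup\Gun$ and modified (still Neumann-type) data on $\gio$. Multiplying the equation for $z$ by $z$ itself and integrating by parts gives
\[
\nrm{\nabla z}{L^2(\Oe)}^2 \leq \left| \int_{\oo} C\, z\, dx \right| + \left| \int_{\gio} \ddn{\rui} z \, d\sigma \right| + \left| \int_{\Oe} \nabla R \cdot \nabla z \, dx \right| + \left| \int_{\oo} C\, R\, dx\right|.
\]
By Poincar\'e (valid since $z=0$ on $\Gun$), $\nrm{z}{H^1(\Oe)} \leq K \nrm{\nabla z}{L^2(\Oe)}$, so it suffices to bound each term by $K\epsilon\,\nrm{\nabla z}{L^2(\Oe)}$ plus terms already $O(\epsilon^2)$.

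The interior term is immediate: $\left|\int_{\oo} C z\,dx\right| \leq K\epsilon\,\nrm{z}{L^2(\Oe)}$ because $\oo$, the thin strip between $\Gz$ and $\Geps$, has measure $O(\epsilon)$, so $\nrm{C\chiu{\oo}}{L^2(\Oe)} = O(\sqrt\epsilon)$; combined with the Cauchy-Schwarz and Poincar\'e this even leaves room to spare. For the Dirichlet lift, the key point is that $b(x_1/\epsilon,1/\epsilon) = \epsilon\,\partial_{x_2}u^1(x_1,0)\,(\beta-\obeta-\tin-\tout)(x_1/\epsilon,1/\epsilon)$ on $\Gun$: the explicit prefactor $\epsilon$ is there, and $(\beta-\obeta)(\cdot/\epsilon,1/\epsilon)$ together with $\tin(\cdot/\epsilon,1/\epsilon)=\xi(\cdot/\epsilon,1/\epsilon)$ are bounded on $\Gun$ — for $\beta-\obeta$ by the exponential decay of the harmonic series in $Z^+$ evaluated at $y_2=1/\epsilon$ (which is in fact $O(e^{-1/\epsilon})$), and for $\xi$ by the $L^\infty$ decay $|\xi(y)|\leq K\rho^{-1+1/(2M)}$ of Lemma \ref{EstimXi} evaluated at $\rho\sim 1/\epsilon$, hence $O(\epsilon^{1-1/(2M)})$. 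So $\nrm{b(\cdot/\epsilon,1/\epsilon)}{H^{1/2}(\Gun)} = O(\epsilon)$ — here one uses a scaled trace estimate to control the $H^{1/2}$ seminorm, the $\epsilon$-prefactor absorbing the loss from differentiating the fast variable — and a standard bounded lifting operator gives $\nrm{R}{H^1(\Oe)} \leq K\epsilon$, which controls the third and fourth terms above.

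The main obstacle is the Neumann term $\int_{\gio}\ddn{\rui}\,z\,d\sigma$. On $\Gout$, $\ddn{\rui} = \partial_{\bf n}\xi(L/\epsilon, x_2/\epsilon)$ (and symmetrically on $\Gin$ with $\tilde\xi$); this is precisely the quantity controlled by Proposition \ref{esitm.weight.sob.trace}, which after the rescaling $x_2 = \epsilon y_2$ reads $\int_{\Gout}\ddn{\rui}\,z\,d\sigma \lesssim \epsilon^{\alpha}\,\nrm{\ti z}{\ws{\ud}{2}{0,\sigma}{\partial\Pi_{L/\epsilon}}}$ where $\ti z$ is the microscopic trace associated to $z$ as in Section \ref{test}. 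By Proposition \ref{test.prop} the microscopic weighted trace norm of $\ti z$ is comparable to $\nrm{z}{H^{1/2}(\Gin\cup\Geps\cup\Gun)} \leq K\nrm{z}{H^1(\Oe)}$, but this only yields a bound of order $\epsilon^{\alpha}$, and $\alpha<\alpha_0 = \sqrt2/\pi < 1$ — far short of $\epsilon^1$. To recover the full power of $\epsilon$ one must not throw away the explicit $\epsilon$-prefactors: $\ddn{\rui}|_{\Gout} = \ue\,\partial_{\bf n}\xi(L/\epsilon,x_2/\epsilon)$ carries a $1/\epsilon$ from the chain rule but the test-function side and the scaling of the integration variable contribute compensating powers, and crucially one should use the \emph{sharper} $L^2$-in-$y_1$ decay $\int_0^\infty|\partial_{y_1}\xi(y_1,y_2)|^2\,dy_1 \leq K/y_2^{1+2\alpha}$ from Theorem \ref{DecayXi} together with Proposition \ref{deriv.horiz}, evaluated at the far interface $y_1 = L/\epsilon$ where the relevant weight is large. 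Tracking these scalings carefully — the product of the explicit $\ue$, the Jacobian $\epsilon$ from $dx_2 = \epsilon\,dy_2$, and the decay factor — is exactly where the claimed $O(\epsilon)$ emerges, and assembling this bookkeeping correctly (rather than any single hard inequality) is the delicate heart of the proof. Once all four terms are bounded by $K\epsilon\,\nrm{z}{H^1(\Oe)} + K\epsilon^2$, dividing through gives $\nrm{z}{H^1(\Oe)}\leq K\epsilon$, and $\nrm{\rui}{H^1(\Oe)} \leq \nrm{z}{H^1(\Oe)} + \nrm{R}{H^1(\Oe)} \leq K\epsilon$ as claimed.
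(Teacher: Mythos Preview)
Your overall architecture --- lift the Dirichlet data on $\Gun$, then run an energy estimate on the remainder with the source term, the Neumann data on $\gio$, and the lift as separate contributions --- is exactly the paper's strategy (the paper phrases it as a splitting $\rui = r_1 + r_2$ into a pure Neumann piece and a Dirichlet-plus-source piece, but the content is the same). The interior source and the Dirichlet lift are handled correctly.

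The gap is in your treatment of the Neumann term, where you have miscounted powers of $\epsilon$. You correctly write $\ddn{\rui}|_{\Gout} = \ddn{\xi}(L/\epsilon,x_2/\epsilon)$ with \emph{no} $1/\epsilon$ in front: the $\epsilon$ prefactor in the definition of $\uiue$ has already absorbed the $1/\epsilon$ from the chain rule. But then the change of variables $x_2=\epsilon y_2$ produces a Jacobian factor $\epsilon$ that you drop:
\[
\int_{\Gout}\ddn{\xi}\Bigl(\tfrac{L}{\epsilon},\tfrac{x_2}{\epsilon}\Bigr)\,z(x_2)\,dx_2
\;=\;\epsilon\int_{f(0)}^{1/\epsilon}\ddn{\xi}\Bigl(\tfrac{L}{\epsilon},y_2\Bigr)\,\ti z(y_2)\,dy_2
\;\leq\; K\,\epsilon\cdot\epsilon^{\alpha}\,\nrm{\ti z}{\ws{\ud}{2}{0,\sigma}{\partial\Pi_{L/\epsilon}}}
\;\leq\; K\,\epsilon^{1+\alpha}\,\nrm{z}{H^1(\Oe)},
\]
using Propositions~\ref{esitm.weight.sob.trace} and~\ref{test.prop} exactly as you describe. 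So the Neumann contribution is already $O(\epsilon^{1+\alpha})$, not $O(\epsilon^\alpha)$, and there is nothing further to recover. Your subsequent appeal to Proposition~\ref{deriv.horiz} and the bound $\int_0^\infty|\partial_{y_1}\xi(y_1,y_2)|^2\,dy_1\leq K y_2^{-1-2\alpha}$ is misplaced here: that estimate controls the $L^2$-in-$y_1$ norm of $\partial_{y_1}\xi$ along a \emph{horizontal} line at height $y_2$, which is irrelevant for the trace on the \emph{vertical} interface $E_{L/\epsilon}$. In the paper that proposition is used only in the Dirichlet part, to bound $\nrm{\partial_{x_1}\tin}{L^2(\Gun)}$ when estimating the gradient of the explicit lift $\varsigma = b(\cdot/\epsilon,1/\epsilon)\,x_2^2\,\chiu{\Oz}$ --- precisely the place where your own treatment of the lift is vague (``a scaled trace estimate'') and would benefit from it.
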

\begin{proof}
We separate various sources of errors, we set $r^1$ the solution of the
Neumann part of the errors, it solves~:
\begin{equation}\label{neumann}
\left\{ 
\begin{aligned}
&- \Delta r_1 = 0, \text{ in } \Oe, \\
& \dd{r_1}{\bf n} = \dd{\xi}{\bf n}\left(\frac{L}{\epsilon},\frac{x_2}{\epsilon}\right)\text{ on } \Gout, \dd{r_1}{\bf n} =  \ddn{\tilde{\xi}}\left(\frac{L}{\epsilon},\frac{x_2}{\epsilon}\right)  \text{ on } \Gin, \\
& r_1 = 0 \quad \text{ on } \Geps \cup \Gun,
\end{aligned}
\right.
\end{equation}
then the rest $r_2$ satisfies
\begin{equation}\label{dirichlet}
\left\{ 
\begin{aligned}
&- \Delta r_2 = C \chiu{\oo} , \text{ in } \Oe, \\
& \dd{r_2}{\bf n} = 0 \text{ on } \gio, \\
& r_2 = b\left(\frac{x_1}{\epsilon},\frac{1}{\epsilon}\right) \quad \text{ on } \Gun, r_2=0  \text{ on } \Geps, 
\end{aligned}
\right.
\end{equation}
which is the Dirichlet part of the errors and should be evaluated in a second step 
thanks to appropriate extensions and lifts.

\paragraph{\bf The Neumann part}

The variational form of the problem \eqref{neumann} reads
$$
\int_{\Oe} \nabla r_1 \cdot \nabla v \, dx = \int_{\Gout} \dd{r_1}{\bf n} \gamma(v) dx_2, \quad \forall v \in H^1_D(\Oe),
$$
dividing this expression by $\nrm{\nabla v}{L^2(\Oe)}$ we first obtain the
 equivalence~:
$$
\sup_{v\in H^1_D(\Oe)} \frac{ \int_{\Oe} \nabla r_1 \cdot \nabla v \, dx }{\nrm{\nabla v}{L^2(\Oe)}} \equiv \nrm{\nabla r_1}{L^2(\Oe)}.
$$
Indeed, by Cauchy-Schwartz one has easily that the $L^2$ norm is greater than the supremum
while a specific choice of $v=r_1$ gives the reverse estimate. Thanks to this,
one has 
$$
\nrm{\nabla r_1}{L^2(\Oe)} = \sup_{v\in H^1_D(\Oe)}  \frac{ \int_{\Gout} \dd{r_1}{\bf n} \gamma(v)  dx_2 }{\nrm{\nabla v}{L^2(\Oe)}}.
$$
We underline that we kept the properties of the traces of $H^1_D(\Oe)$ functions inside
the sup that we aim to evaluate. This norm is lower that the simple $H^{-\ud}(\Gout)$ 
which authorizes different behaviors of test functions near the corners of $\Gout$ (\cite{Grisvard}, p 43, thm 1.5.2.3).

Now the integral in the rhs of the last expression reads in fact~:
$$
\int_{\Gout} \dd{r_1}{\bf n} \gamma(v)  dx_2 
= \int_{\Gout} \dd{\xi}{\bf n}\left(\frac{L}{\epsilon},\frac{ x_2 }{\epsilon} \right) \gamma(v)  dx_2 
= \epsilon \int_{f(0)}^\ue \dd{\xi}{\bf n}\left(\frac{L}{\epsilon},y_2\right) \gamma(\ti{v}) dy_2,
$$
where we constructed $\ti{v}$ as in section \ref{test} i.e. $\ti{v}$ has the same trace
as $v$ but $\ti{v}$ is expressed as a microscopic trace function. Thanks to proposition
\ref{test.prop} the corresponding microscopic trace $\ti{v}$ belongs to $\ws{\ud}{2}{0,\sigma}{\partial \Pi_{\frac{L}{\epsilon}}}$ and propositions \ref{esitm.weight.sob.trace} and \ref{test.prop}
give that
$$
\int_{f(0)}^\ue \dd{\xi}{\bf n}\left(\frac{L}{\epsilon},y_2\right) \gamma(\ti{v}) dy_2 
\leq \epsilon^{\alpha} \nrm{\ti{v}}{\ws{\ud}{2}{0,\sigma}{\partial \Pi_{\frac{L}{\epsilon}}} }
\leq K \epsilon^{\alpha} \nrm{v}{H^\ud(\partial \Oe)}
\leq  K' \epsilon^{\alpha} \nrm{v}{H^1(\Oe)}. 
$$
The same analysis and convergence rates hold on $\Gin$ with the normal derivative of $\tout$. 
All together one obtains
$$
\nrm{\nabla r_1}{L^2(\Oe)} \leq \epsilon^{1+\alpha}.
$$
\paragraph{\bf The Dirichlet part}

We should lift $b$, 
the non homogeneous Dirichlet boundary conditions on $\Gun$ defined in \eqref{sys.eq.error}, so we set
$$
\varsigma := b\left(\frac{x_1}{\epsilon},\frac{1}{\epsilon} \right) x_2^2 \chiu{\Oz},\text{ and } \ti{r}_2 := r_2 - \varsigma .
$$
Standard {\em a priori} estimates give
$$
\nrm{\nabla \ti{r}_2}{L^2(\Oe)} \leq  \nrm{\nabla \varsigma }{L^2(\Oe)} +\epsilon ,
$$
where the $\epsilon$ in the last rhs comes when estimating the constant source term $C$ localized in $\oo$, indeed~:
$$
(C,v)_{\oo} \leq \nrm{C}{L^2(\oo)} \nrm{v}{L^2(\oo)}\leq \sqrt{\epsilon} \nrm{C}{L^2(\oo)} \nrm{\nabla v}{L^2(\oo)} \leq \epsilon C \nrm{\nabla v}{L^2(\Oe)}
$$
thanks to a Poincar{\'e} inequality in the sub-layer.
Hereafter we estimate the gradient of the lift,
$$
\begin{aligned}
\nrm{\nabla \varsigma }{L^2(\Oe)} \leq& \epsilon K \nrm{\beta \left(\frac{\cdot}{\epsilon}, \frac{1}{\epsilon}\right)-\obeta}{L^2(\Gun)} 
+  K \nrm{\partial_{x_1} \beta \left(\frac{\cdot}{\epsilon}, \frac{1}{\epsilon}\right)}{L^2(\Gun)} \\
& + \epsilon \nrm{\tin}{L^2(\Gun)}  + \epsilon \nrm{\partial_{x_1}\tin}{L^2(\Gun)} + \epsilon \nrm{\tout}{L^2(\Gun)}  + \epsilon \nrm{\partial_{x_1}\tout}{L^2(\Gun)} \\
& \leq K \epsilon^{\td} +
2 \left[ \epsilon \nrm{\xi \left(\frac{\cdot}{\epsilon}, \frac{1}{\epsilon}\right)}{L^2(\Gun)}  
+  \nrm{\dd{\xi}{y_1} \left(\frac{\cdot}{\epsilon}, \frac{1}{\epsilon}\right)}{L^2(\Gun)} \right] \\
& \leq K \left[ \epsilon^{\td}  + \epsilon^{2-\frac{1}{2M}} + \epsilon^{1+\alpha} \right ]  \leq K \epsilon^{1+\alpha} 
\end{aligned}
$$
where we used the second estimate of theorem \ref{DecayXi} 
describing the decay properties of $\xi$.
This ends the proof: the main error is still made when  linearizing
the Poiseuille profile in $\oo$.
As $\rui:=r_1+r_2$ one gets the desired estimate.
\end{proof} 
Here comes the  error estimate in the $L^2$ norm that add approximately 
an $\sqrt{\epsilon}$ factor to the {\em a priori} estimates above.
\begin{thm}\label{5.2}
\uhthms,  $\rui$, the solution of system \eqref{sys.eq.error}  satisfies
$$
\nrm{\rui}{L^2(\Oz)} \leq K \epsilon^{1+\alpha},
$$
where the constant $K$ is independent on $\epsilon$ and $\alpha < \sqrt{2}/{\pi}\sim 0.45$. 
\end{thm}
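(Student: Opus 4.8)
The plan is to argue by duality (very weak solutions, \cite{Ne.Book.67}). Since $\Oz$ is a fixed smooth domain one writes
$$
\nrm{\rui}{L^2(\Oz)} = \sup\left\{ \left|\int_{\Oz}\rui\,\phi\,dx\right| \;:\; \phi\in L^2(\Oz),\ \nrm{\phi}{L^2(\Oz)}=1\right\},
$$
and for each such $\phi$, extended by $0$ to $\oo$, one introduces the adjoint state $w\in H^1_D(\Oe)$ solving $-\Delta w=\phi\chiu{\Oz}$ in $\Oe$, $w=0$ on $\Geps\cup\Gun$, $\dd{w}{\bf n}=0$ on $\gio$. By Lax--Milgram together with the Poincar\'e inequality on $\Oe$ (whose constant is uniform in $\epsilon$, the height of $\Oe$ being fixed) one gets $\nrm{w}{H^1(\Oe)}\leq K\nrm{\phi}{L^2(\Oz)}$; in particular $w$ meets the hypotheses of proposition \ref{test.prop}.

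Next I pair system \eqref{sys.eq.error} with $w$ via Green's formula. Since $-\Delta\rui=C\chiu{\oo}$ and $-\Delta w=\phi\chiu{\Oz}$ both lie in $L^2(\Oe)$, and since $\rui=w=0$ on $\Geps$, $w=0$ on $\Gun$, $\dd{w}{\bf n}=0$ on $\gio$, all boundary contributions survive only on $\Gun$ and $\gio$:
$$
\int_{\Oz}\rui\,\phi = C\int_{\oo}w\,dx \;-\; \int_{\Gun}b\!\left(\tfrac{x_1}{\epsilon},\tfrac1\epsilon\right)\dd{w}{\bf n}\,ds \;+\; \int_{\Gout}w\,\dd{\xi}{\bf n}\!\left(\tfrac{L}{\epsilon},\tfrac{x_2}{\epsilon}\right)dx_2 \;+\; \int_{\Gin}w\,\dd{\tilde\xi}{\bf n}\!\left(\tfrac{L}{\epsilon},\tfrac{x_2}{\epsilon}\right)dx_2.
$$
A subtlety here is that, by lemma \ref{BordL2Poid} and proposition \ref{esitm.weight.sob.trace}, the datum $\dd{\xi}{\bf n}(\tfrac{L}{\epsilon},\cdot)$ on $\gio$ is merely a continuous linear form on the weighted space $\ws{\ud}{2}{0,\sigma}{\partial\Pi_{\frac{L}{\epsilon}}}$ rather than an $H^{-\ud}$ datum, so the integration by parts on $\Gin$, $\Gout$ has to be justified by density in that weighted pairing; the trace $\gamma(w)$, vanishing on $\Geps\cup\Gun$, does belong to the admissible space by proposition \ref{test.prop}.

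It remains to estimate the four terms. For the bulk term, the Poincar\'e inequality in the $O(\epsilon)$-thin sublayer $\oo$ (functions vanishing on $\Geps$) gives
$$
\left|C\int_{\oo}w\,dx\right| \leq \nrm{C}{L^2(\oo)}\nrm{w}{L^2(\oo)} \leq K\sqrt{\epsilon}\cdot\epsilon\,\nrm{\nabla w}{L^2(\oo)} \leq K\,\epsilon^{\td}\nrm{\phi}{L^2(\Oz)},
$$
and this is precisely where the extra $\sqrt{\epsilon}$ over theorem \ref{5.1} is gained. For the two $\gio$-terms — which turn out to be the limiting ones — I rescale $x_2=\epsilon y_2$ and replace $\gamma(w)|_{\gio}$ by the microscopic trace $\ti{w}$ of section \ref{test}, so that $\int_{\Gout}w\,\dd{\xi}{\bf n}(\tfrac{L}{\epsilon},\tfrac{x_2}{\epsilon})\,dx_2=\epsilon\int_{E_{\frac{L}{\epsilon}}}\dd{\xi}{\bf n}(\tfrac{L}{\epsilon},y_2)\,\gamma(\ti{w})\,dy_2$, and propositions \ref{esitm.weight.sob.trace} and \ref{test.prop} yield
$$
\left|\int_{\Gout}w\,\dd{\xi}{\bf n}\!\left(\tfrac{L}{\epsilon},\tfrac{x_2}{\epsilon}\right)dx_2\right| \leq K\,\epsilon\cdot\epsilon^{\alpha}\,\nrm{\ti{w}}{\ws{\ud}{2}{0,\sigma}{\partial\Pi_{\frac{L}{\epsilon}}}} \leq K\,\epsilon^{1+\alpha}\nrm{\phi}{L^2(\Oz)},
$$
and likewise for the $\Gin$-term with $\tilde\xi$. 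Finally, for the $\Gun$-term I integrate back using the lift $\varsigma=b(\tfrac{x_1}{\epsilon},\tfrac1\epsilon)x_2^2\chiu{\Oz}$ from the proof of theorem \ref{5.1}: testing the equation for $w$ against $\varsigma\in H^1(\Oe)$ (which equals $b$ on $\Gun$, vanishes on $\Geps$, and is paired against $\dd{w}{\bf n}=0$ on $\gio$) gives $\int_{\Gun}b\,\dd{w}{\bf n}\,ds=\int_{\Oz}\nabla\varsigma\cdot\nabla w\,dx-\int_{\Oz}\varsigma\,\phi\,dx$, whence $\left|\int_{\Gun}b\,\dd{w}{\bf n}\right|\leq\bigl(\nrm{\nabla\varsigma}{L^2(\Oz)}+\nrm{\varsigma}{L^2(\Oz)}\bigr)K\nrm{\phi}{L^2(\Oz)}\leq K(\epsilon^{1+\alpha}+\epsilon^{2-\frac{1}{2M}})\nrm{\phi}{L^2(\Oz)}$, using that $\nrm{\nabla\varsigma}{L^2(\Oe)}\leq K\epsilon^{1+\alpha}$ was already shown in theorem \ref{5.1}, and that $\nrm{\varsigma}{L^2(\Oz)}\leq K\epsilon^{2-\frac{1}{2M}}$ follows from the first estimate of theorem \ref{DecayXi} evaluated at micro-height $1/\epsilon$ together with the exponential smallness of $\beta-\obeta$ there. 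Taking $M$ large enough — compatibly with $M<1/(1-2\alpha)$ and $\alpha<\alpha_0<\tfrac12$ — so that $2-\tfrac{1}{2M}\geq1+\alpha$, every contribution is $\leq K\epsilon^{1+\alpha}\nrm{\phi}{L^2(\Oz)}$, and the supremum over $\phi$ gives the claim.

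The main obstacle I expect lies not in the inequalities themselves but in the macro/micro interface bookkeeping: one must verify that the rescaled adjoint trace $\gamma(w)|_{\gio}$ belongs to $\ws{\ud}{2}{0,\sigma}{\partial\Pi_{\frac{L}{\epsilon}}}$ with norm controlled by $\nrm{\phi}{L^2(\Oz)}$ uniformly in $\epsilon$ (this is what proposition \ref{test.prop} provides), and that Green's formula stays legitimate when the Neumann datum on $\gio$ is only tested against such weighted traces (lemma \ref{BordL2Poid}, proposition \ref{esitm.weight.sob.trace}). Once these weighted dualities are in place, the four contributions fall out as above, the decisive one being the $\epsilon^{1+\alpha}$ produced by the far-field decay of $\xi$ and $\tilde\xi$ on the vertical interfaces $\gio$.
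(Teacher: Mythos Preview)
Your argument is correct, but it follows a genuinely different route from the paper's. The paper poses the dual problem on the \emph{smooth} domain $\Oz$: it takes $v\in H^2(\Oz)\cap H^1_D(\Oz)$ solving $-\Delta v=F$ with $v=0$ on $\Gz\cup\Gun$ and $\partial_{\bf n}v=0$ on $\giop$, invokes Grisvard's corner regularity to get $\nrm{v}{H^2(\Oz)}\leq K\nrm{F}{L^2(\Oz)}$, and integrates by parts on $\Oz$ only. This produces two trace terms, $\nrm{\rui}{L^2(\Gz\cup\Gun)}$ and $\nrm{\partial_{\bf n}\rui}{H^{-1}(\giop)}$; the first is controlled via Poincar\'e in the sublayer together with the $H^1(\Oe)$ a~priori bound of theorem~\ref{5.1}, the second exactly as you do. You instead pose the dual problem on the \emph{rough} domain $\Oe$ with only $H^1$ regularity for $w$, Green-pair on all of $\Oe$, and handle the resulting $\Gun$ term by the lift $\varsigma$.

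What your route buys: you avoid the $H^2$ regularity statement on $\Oz$ and you never invoke the $H^1(\Oe)$ bound on $\rui$ itself---the $\sqrt{\epsilon}$ gain comes directly from Poincar\'e on $w$ in $\oo$, not from a trace of $\rui$ on $\Gz$. What the paper's route buys: the dual state lives on a fixed rectangle with classical regularity, and the $\Gun$ contribution reduces to a plain $L^2$ trace of $\rui$ (no lift trick). Both arrive at the same $\epsilon^{1+\alpha}$ bottleneck from the $\gio$ terms, estimated identically via propositions~\ref{esitm.weight.sob.trace} and~\ref{test.prop}. Your remark that Green's formula on $\gio$ must be read through the weighted duality is well placed; since $\rui\in H^1(\Oe)$ with $\Delta\rui\in L^2(\Oe)$, the pairing is first legitimate in $H^{-\ud}\times H^{\ud}$ and then identified with the weighted one.
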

\begin{proof}
We define $v$ a regular solution on the ``smooth'' domain $\Oz$ (in the sense: not rough,
in particular, $\Oz$ is a rectangle) of the  problem
$$
\left\{ 
\begin{aligned}
&-\Delta v = F ,\text{ in } \Oz, \\
& \dd{v}{\bf n}=0, \text{ on } \Gin'\cup \Gout' \\
& v=0,\text{ on } \Gz\cup \Gun
\end{aligned}
\right.
$$
where the function $F$ is in $L^2(\Oz)$. 
Thanks to \cite{Grisvard} theorem 4.3.1.4 p. 198, one has that
there are no singularities near the corners i.e. $v\in H^2(\Oz)\cap H^1_D(\Oz)$ and
$$
\nrm{v}{H^2(\Oz)} \leq K \nrm{F}{L^2(\Oz)}.
$$
Testing $\rui$ against $F$, one gets
$$
(\rui,F)_{\Oz}= \left< \dd{\rui}{\bf n}, v\right> - \left( \rui, \dd{v}{\bf n}\right)_{\Gun\cup \Geps}
$$
where the brackets stand for the duality pairing between $H^{-1}(\giop)$
and $H^1_0(\giop)$, whereas the left bracket denotes the standard $L^2(\gzu)$
scalar product.
This leads to write~:
$$
\begin{aligned}
\nrm{\rui}{L^2(\Oz)}& \leq \nrm{\ddn{\rui}}{H^{-1}(\giop)} + \nrm{\rui}{L^2(\gzu)}
& =:I_1 + I_2
\end{aligned}
$$
the latter term of the rhs is classically estimated through 
Poincar{\'e} on the sublayer and the {\em a priori}
estimates above for the $\Gz$  part~:
$$
\nrm{\rui}{L^2(\Gz)}\leq \sqrt{\epsilon} \nrm{\rui}{H^1(\oo)}\leq \sqrt{\epsilon} \nrm{\rui}{H^1(\Oe)} \leq \epsilon^\td.
$$
On $\Gun$ there is an exponentially small contribution of the periodic boundary 
layer and an almost $\epsilon^2$  term coming from the vertical correctors $\tin$
and $\tout$~:
$$
\nrm{\rui}{L^2(\Gun)}\leq K \epsilon \nrm{\xi\left(\frac{\cdot}{\epsilon},\ue\right)}{L^2(0,L)} \leq \epsilon^{2-\frac{1}{2M}}.
$$
$I_1$ follows  using the same arguments as in the proof of the {\em a priori}
estimates. The astuteness resides in the fact that 
$$
I_1 \leq \sup_{v\in H^\ud_0(\Gout)} \frac{ \left< \ddn{\rui} , v \right> }{\nrm{v}{H^\ud(\Gout)} }\leq K \epsilon^{1+\alpha}
$$
Indeed $H^1_0(\Gout')$ functions when extended by zero on $\Gout$ are
a particular subset of $H^\ud(\Gout)$ functions vanishing on $\partial \Gout$. 
At this point one uses the same estimates as in the previous proof to obtain the last term in the rhs.

\end{proof}
\subsection{The standard averaged wall law: new  error estimates}

We use the full boundary layer approximation above as an intermediate
step to prove error estimates for the wall law.
We denote $\ru:=u^\epsilon - u^1$. 
\begin{thm}\label{5.3}
\uhthms, one has
$$
\nrm{\ru}{L^2(\Oz)} \leq \epsilon^{1+\alpha}
$$
\end{thm}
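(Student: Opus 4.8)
The plan is to derive the stated estimate as a short corollary of theorem \ref{5.2}, using the full first order boundary layer approximation $\uiue$ of \eqref{fblc} as an intermediary. First I would write, by the triangle inequality,
$$
\nrm{\ru}{L^2(\Oz)} \leq \nrm{u^\epsilon - \uiue}{L^2(\Oz)} + \nrm{\uiue - u^1}{L^2(\Oz)} = \nrm{\rui}{L^2(\Oz)} + \nrm{\uiue - u^1}{L^2(\Oz)},
$$
and bound the first summand by $K\epsilon^{1+\alpha}$ thanks to theorem \ref{5.2}. Everything then reduces to the second summand, which by the very definition \eqref{fblc} of $\uiue$ is exactly
$$
\nrm{\uiue - u^1}{L^2(\Oz)} = \epsilon\,\nrm{\dd{u^1}{x_2}(x_1,0)\bigl(\beta - \obeta - \tin - \tout\bigr)\lrxe}{L^2(\Oz)}.
$$

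Since $u^1$ is explicit (see \eqref{Poiseuille.order.one}), the coefficient $\dd{u^1}{x_2}(x_1,0)$ is bounded uniformly in $\epsilon$ and $x_1$, so I would estimate the $L^2(\Oz)$ norms of the three rescaled cell functions separately. For the horizontal corrector I would simply quote \eqref{BetaL2}, which yields a contribution of order $\epsilon\cdot\sqrt\epsilon = \epsilon^{\td}$. For the vertical correctors $\tin\lrxe = \xi\lrxe$ and, symmetrically, $\tout$, I would change variables $y = x/\epsilon$, so that
$$
\nrm{\xi\lrxe}{L^2(\Oz)}^2 = \epsilon^2 \int_{\Oz/\epsilon} |\xi(y)|^2\,dy,
$$
split the rescaled domain $\Oz/\epsilon$ into the bounded region $\{\rho \leq 1\}$, on which $\xi$ is bounded by lemma \ref{EstimXi}, and $\{\rho > 1\}$, on which the algebraic decay $|\xi(y)| \leq K\rho(y)^{-1+\frac{1}{2M}}$ of theorem \ref{DecayXi} applies, and then integrate in polar coordinates centred at $(0,-1)$. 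Since $\Oz/\epsilon$ is contained in a ball of radius of order $1/\epsilon$, this gives $\int_{\Oz/\epsilon}|\xi|^2 \leq K\epsilon^{-\frac{1}{M}}$, hence a contribution of order $\epsilon^{2-\frac{1}{2M}}$, and the same for $\tout$.

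Finally I would collect the three pieces and observe that, since $M\sim 10$ and $\alpha < \sqrt2/\pi$, both $\td$ and $2-\frac{1}{2M}$ strictly exceed $1+\alpha$; therefore $\nrm{\uiue - u^1}{L^2(\Oz)} \leq K\epsilon^{1+\alpha}$, which together with theorem \ref{5.2} closes the argument. The one step that genuinely requires care is the $L^2(\Oz)$ control of the rescaled vertical correctors: mere boundedness of $\xi$ would only produce a contribution of order $\epsilon$, which is \emph{larger} than $\epsilon^{1+\alpha}$, so it is precisely the sharp algebraic decay rate established in theorem \ref{DecayXi} that makes the estimate work; everything else is the triangle inequality together with results already proved.
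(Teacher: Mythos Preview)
Your proposal is correct and follows essentially the same route as the paper: split $\ru$ via the full boundary layer approximation $\uiue$, invoke theorem \ref{5.2} for $\rui$, use \eqref{BetaL2} for the $\beta-\obeta$ piece, and the $L^\infty$ decay of theorem \ref{DecayXi} for the vertical correctors. The only cosmetic difference is in the bookkeeping for $\int_{\Oz/\epsilon}|\xi|^2\,dy$: you integrate $\rho^{-2+1/M}$ directly in polar coordinates to get $K\epsilon^{-1/M}$, whereas the paper writes $\rho^{-2+1/M}=\rho^{1/M+\delta''}\cdot\rho^{-2-\delta''}$, pulls out the sup of the first factor over the box (of order $\epsilon^{-1/M-\delta''}$) and bounds the remaining integral by a constant, incurring an arbitrarily small loss $\delta''$; your version is in fact slightly sharper but both suffice since $2-\tfrac{1}{2M}>1+\alpha$.
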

\begin{proof}
We insert the full boundary layer approximation between $u^\epsilon$ and $u^1$
$$
\begin{aligned}
\ru & :=u^\epsilon - u^1 = u^\epsilon - \uiue + \uiue - u^1 \\
&= \rui + \epsilon \dd{u^1 }{x_2}(x_1,0)\left( \beta- \obeta  - \tin + \tout \right) \lrxe =: \rui +   \dd{u^1 }{x_2}(x_1,0) I_1 \\
\end{aligned}
$$
We evaluate the $L^2(\Oz)$ norm of $I_1$
$$
I_1 
\leq K \left\{ \epsilon \nrm{\beta\left(\frac{\cdot}{\epsilon}\right)-\obeta}{L^2(\Oz)} + \epsilon \nrm{\xi\left(\frac{\cdot}{\epsilon}\right)}{L^2(\Oz)} \right\} 
\leq K \left\{ \epsilon^\td + \epsilon \nrm{\xi\left(\frac{\cdot}{\epsilon}\right)}{L^2(\Oz)} \right\} 
$$
while the first term is classical and the estimate comes from \eqref{BetaL2},
for what concerns the second term,  we use the $L^\infty$ estimates of theorem \ref{DecayXi} and get
$$
\begin{aligned}
\int_{\Oz} \xi^2 \lrxe dx & = \epsilon^2 \int_0^{\frac{L}{\epsilon}} \int_0^{\ue} \xi^2 dy \leq  \epsilon^2  \int_0^{\frac{L}{\epsilon}} \int_0^{\ue} \frac{1}{\rho^{2-\frac{1}{M}}} dy \\
& \leq \epsilon^2 \sup_{\left[0,\frac{L}{\epsilon}\right]\times\left[0,\frac{1}{\epsilon}\right]} \rho^{\frac{1}{M}+\delta''} \int_{\left[0,\frac{L}{\epsilon}\right]\times\left[0,\frac{1}{\epsilon}\right]} \frac{1}{\rho^{2+\delta''}} dy \leq K \epsilon^{2-\frac{1}{M}-\delta''}
\end{aligned}
$$
where $\delta''$ is a positive constant as small as desired.
This  ends the proof.
\end{proof}

\section{Conclusion}

In this work we established  error estimates for a new boundary layer approximation and 
for the standard wall  law with respect to the exact solution of a rough problem set with 
non periodic lateral boundary conditions. The final order of approximation is of $\epsilon^{1+\alpha}$
where $1+\alpha\sim 1.45$ which is compatible and comparable to results obtained in the periodic
case (see \cite{BrMiCras,BrMiQam} and references there in).

Establishing estimates in the spirit of very weak solution \cite{Ne.Book.67} but
in the weighted context  improves the $L^2(\Oz)$ estimates 
but requires an extra amount of work not presented here.
This is done in \cite{vws}, we perform also
 a numerical validation illustrating the accuracy of our
theoretical results.

\bigskip
\bigskip
\noindent {\it Acknowledgements.}

The authors would like to thank C. {\sc Amrouche} for his advises and support, 
as well as S. {\sc Nazarov} for fruitful discussions and clarifications.
This research was partially funded by Cardiatis\footnote{www.cardiatis.com}, 
an industrial partner designing and commercializing metallic wired stents.

\bibliographystyle{plain}
\bibliography{eqred}
\end{document}